\documentclass[a4paper, 11pt, twoside]{article}
\usepackage{fullpage}

\usepackage{amsmath}
\usepackage{amssymb}
\usepackage{mathtools}
\usepackage{amsthm}
\usepackage{nicefrac}
\usepackage{multirow}
\usepackage{tikz-cd} 
\usepackage{subcaption}
\usepackage{multicol,caption}

\usepackage{wrapfig}
\usepackage{algorithm}

\usepackage[flushleft]{threeparttable} 
\usepackage{colortbl}
\definecolor{bgcolor}{rgb}{0.8,1,1}
\definecolor{bgcolor2}{rgb}{0.8,1,0.8}
\definecolor{niceblue}{rgb}{0.0,0.19,0.56}

\usepackage{hyperref}
\hypersetup{colorlinks,linkcolor={blue},citecolor={niceblue},urlcolor={blue}}

\usepackage{pifont}
\definecolor{PineGreen}{RGB}{0,110,51}
\definecolor{BrickRed}{RGB}{143,20,2}
\newcommand{\cmark}{{\color{PineGreen}\ding{51}}}%
\newcommand{\xmark}{{\color{BrickRed}\ding{55}}}%

\usepackage{thmtools}

\newcommand{\Exp}[1]{\mathbb{E} \left[ #1\right]}
\newcommand{\Expk}[1]{\mathbb{E}_k \left[ #1\right]}

\usepackage[colorinlistoftodos,bordercolor=orange,backgroundcolor=orange!20,linecolor=orange,textsize=scriptsize]{todonotes}

\definecolor{shadecolor}{gray}{0.9}
\declaretheoremstyle[
headfont=\normalfont\bfseries,
notefont=\mdseries, notebraces={(}{)},
bodyfont=\normalfont,
postheadspace=0.5em,
spaceabove=\topsep,
mdframed={
  skipabove=8pt,
  skipbelow=8pt,
  hidealllines=true,
  backgroundcolor={shadecolor},
  innerleftmargin=4pt,
  innerrightmargin=4pt}
]{shaded}

\declaretheorem[style=shaded,within=section]{definition}
\declaretheorem[style=shaded,sibling=definition]{theorem}
\declaretheorem[style=shaded,sibling=definition]{proposition}
\declaretheorem[style=shaded,sibling=definition]{assumption}
\declaretheorem[style=shaded,sibling=definition]{corollary}

\declaretheorem[style=shaded,sibling=definition]{lemma}

\usepackage[capitalize,noabbrev]{cleveref}

\usepackage{xspace}

\newcommand{\algname}[1]{{\color{PineGreen}\sf  #1}\xspace}

\newcommand{\R}{\mathbb{R}}

\newcommand{\cO}{{\cal O}}
\newcommand{\cS}{{\mathcal S}}

\newcommand{\hx}{\hat{x}}

\newcommand{\la}{\left\langle}

\newcommand{\ra}{\right\rangle}

\newcommand{\eqdef}{:=}
\newcommand{\argmin}{\text{argmin}}

\newcommand{\norm}[1]{\left\|#1\right\|}
\newcommand{\sqnorm}[1]{\left\| #1 \right\|^2}
\newcommand{\inprod}[2]{\left\langle #1,#2 \right\rangle}

\newcommand{\reals}{\mathbb{R}}
\newcommand{\cX}{\mathcal{X}}

\newcommand{\holderexponent}{\nu}
\newcommand{\cC}{\mathcal{C}}

\usepackage{amsmath,amsfonts,amssymb,amsthm,array}
\usepackage{caption}

\usepackage{graphicx}
\usepackage{booktabs} 

\usepackage{tabularx}
\usepackage{pifont}

\providecommand{\cmark}{\ding{51}}
\providecommand{\xmark}{\ding{55}}

\usepackage{float}
\usepackage{enumitem}
\usepackage{nicefrac}

\usepackage{algorithmic}

\usepackage{tikz}
\usetikzlibrary{calc,arrows.meta}

\newenvironment{tikzfigure}[1][htbp]
    {\begin{figure}[#1]\centering}
    {\end{figure}}

\theoremstyle{plain}

\newtheorem{rem}{Remark}[]

\theoremstyle{remark}

\usepackage{mdframed} 
\usepackage{thmtools}

\usepackage[flushleft]{threeparttable} 

\usepackage{subcaption}
\usepackage{multirow}
\usepackage{colortbl}
\definecolor{bgcolor}{rgb}{0.8,1,1}
\definecolor{bgcolor2}{rgb}{0.8,1,0.8}
\definecolor{niceblue}{rgb}{0.0,0.19,0.56}

\usepackage{hyperref}
\hypersetup{colorlinks,linkcolor={blue},citecolor={niceblue},urlcolor={blue}}

\usepackage[numbers]{natbib}
\usepackage{sidecap}

\definecolor{shadecolor}{gray}{0.9}
\declaretheoremstyle[
headfont=\normalfont\bfseries,
notefont=\mdseries, notebraces={(}{)},
bodyfont=\normalfont,
postheadspace=0.5em,
spaceabove=1pt,
mdframed={
  skipabove=8pt,
  skipbelow=8pt,
  hidealllines=true,
  backgroundcolor={shadecolor},
  innerleftmargin=4pt,
  innerrightmargin=4pt}
]{shaded}

\usepackage{xspace}

\begin{document}
\title{\bf Polyak-Type Extragradient Methods \\for Monotone Root-Finding Problems}

\author{TaeHo Yoon  \quad Sayantan Choudhury \quad Ezra Greenberg  \quad Nicolas Loizou}
\date{
Department of Applied Mathematics \& Statistics\\ 
Mathematical Institute for Data Science (MINDS)\\
Johns Hopkins University 
}    
\maketitle

\begin{abstract}
We study Polyak-type step-size selection for extragradient methods for solving deterministic and stochastic monotone root-finding problems. 
We show that the known projection-type correction for deterministic extragradient arises from minimizing an upper bound on the distance to a solution, paralleling the classical Polyak step-size construction. 
Using this viewpoint, we provide a unified deterministic analysis of the Polyak-type Extragradient Method (PolyakEG), based on a local critical condition controlling the variation of operator $F$ along the extrapolation direction. 
This analysis does not require global Lipschitz continuity, and covers sublinear convergence under broader conditions such as H\"older continuity or $(L_0, L_1)$-Lipschitzness and linear convergence under additional strong monotonicity, all through a single framework.
We then study the stochastic extensions of this approach. 
We first prove convergence of a direct stochastic variant, PolyakSEG, 
when all stochastic component operators share a common solution.
We also show that, without this condition, PolyakSEG with nonvanishing step-sizes may fail to converge to a zero of the mean operator. 
To address this limitation, we propose DecPolyakSEG, which combines decreasing step-sizes with Polyak-type updates, and establish a sublinear residual convergence result without requiring a common solution across the component operators.
These results parallel recent developments in stochastic Polyak step-sizes from the convex minimization literature and establish an analogous research avenue in the broader root-finding regime.
\end{abstract}

\noindent \textbf{Keywords:} 
 Extragradient methods; Polyak step-sizes, monotone root finding, adaptive step-sizes, stochastic approximation, line-search,
$(L_0,L_1)$-Lipschitz continuity

\vspace{.2cm}

\noindent \textbf{Mathematics Subject Classification} 65K15, 47H05, 90C33, 62L20
\newpage

\tableofcontents
\newpage

\section{Introduction}
\label{sec:introduction}

Let $F:\R^d\to\R^d$ be a single-valued operator. In this work, we consider the root-finding problem:
\begin{equation}
    \text{find }x_*\in\R^d
    \quad\text{such that}\quad
    F(x_*)=0.
    \label{eq:root_finding}
\end{equation}  
We assume throughout that the solution set
$\mathcal{X}_*\eqdef\{x\in\R^d:F(x)=0\}$ is nonempty. 
Problem~\eqref{eq:root_finding}, also known as a system of nonlinear equations when $F$ is nonlinear, encompasses several important classes of optimization and equilibrium problems, as it encodes the first-order optimality conditions for smooth convex minimization, saddle-point conditions for convex-concave min-max problems, and equilibrium conditions for smooth multiplayer games. 

The breadth of the root-finding problem has motivated the development of an extensive algorithmic literature. 
Classical approaches include the proximal point method for maximal monotone operators \citep{rockafellarMonotoneOperatorsProximal1976}, the extragradient method \citep{Korpelevich1976_extragradient}, Popov's method \citep{Popov1980_modification}, forward-backward-forward splitting \citep{tsengModifiedForwardbackwardSplitting2000}, and the non-Euclidean prox and
dual-extrapolation methods \citep{nemirovskiProxmethodRateConvergence2004,nesterovDualExtrapolationIts2007}. 
More recent developments include the optimism interpretation and reflection methods that use one new operator evaluation per iteration \citep{RakhlinSridharan2013_optimization,hsiehConvergenceSinglecallStochastic2019,gorbunovConvergenceProximalPoint2023, ChoudhuryGorbunovLoizou2023_singlecall}, 
accelerated methods based on adapting the optimized Halpern iteration \citep{Halpern1967_fixed,SabachShtern2017_first,Lieder2021_convergence,Kim2021_accelerated,contrerasOptimalErrorBounds2023} for proximal/fixed-point settings to the explicit/root-finding settings \citep{Diakonikolas2020_halpern,yoonAcceleratedAlgorithmsSmooth2021,LeeKim2021_fast,Tran-DinhLuo2021_halperntype,tran-dinhHalpernsFixedpointIterations2024} or identifying distinct acceleration mechanisms \citep{sedlmayerFastOptimisticMethod2023,yoonOptimalAccelerationMinimax2024,botFastOptimisticGradient2025,yoonHinvarianceTheoryComplete2026,yoonTheoryCompositionDuality2026}, and their stochastic or randomized-coordinate extensions for large-scale problems \citep{cai2022stochastic,tran2024variance,caiVarianceReducedHalpern2024,chenNearoptimalAlgorithmsMaking2024,tran2026randomized,yoon2026direct}. 
In this work, we are also interested in the natural stochastic setting where the objective operator $F$ is expressed as a finite sum $F(x) = \frac{1}{n} \sum_{i=1}^n F_i (x)$ and one can only access the evaluations of sample operators $F_i$~\cite{loizou2020stochastic,loizou2021stochastic,gorbunovStochasticExtragradientGeneral2022,ChoudhuryGorbunovLoizou2023_singlecall}.

Our analysis focuses on monotone operators, the standard setting for extragradient methods. We also consider strong monotonicity, under which sharper convergence guarantees can be obtained. We recall these notions below.

\begin{definition}
    \label{assume:strong_monotone}
    An operator $F$ is called \emph{monotone}, if for all $x, y \in\R^d$,
        \begin{equation}\label{eq:monotone}
           \la F(x) - F(y), x - y \ra \geq 0.
        \end{equation}
    For $\mu>0$, we say that $F$ is \emph{$\mu$-strongly monotone} if, for all $x, y \in\R^d$,
\begin{equation}\label{eq:strong_monotone}
        \la F(x) - F(y), x - y \ra \geq \mu \|x - y\|^2.
    \end{equation}
\end{definition} 

In parts of our stochastic analysis, we impose these properties samplewise; that is, each \(F_i\) is monotone or \(\mu_i\)-strongly monotone. These operator conditions arise naturally in the optimization and equilibrium models captured by the root-finding formulation, as we now illustrate.

First, consider unconstrained smooth convex minimization. 
Indeed, if $f\colon \R^d\to\R$ is continuously differentiable and ($\mu$-strongly) convex, then $\nabla f$ is a ($\mu$-strongly) operator and $x_*$ is a minimizer if and only if $\nabla f(x_*)=0$.

Another important special case is the convex-concave min-max problem
\begin{equation}
    \underset{x^1\in\R^{d_1}}{\text{minimize}} \,\,
    \underset{x^2\in\R^{d_2}}{\text{maximize}} 
    \quad
    g(x^1,x^2),
    \label{eq:min_max}
\end{equation}
where $g\colon \R^{d_1}\times\R^{d_2}\to\R$ is continuously differentiable. 
Writing $x=(x^1,x^2)\in\R^d$ with $d=d_1+d_2$, define the saddle operator
\begin{equation}
    F(x)
    \eqdef
    \begin{pmatrix}
        \nabla_{x^1}g(x^1,x^2)\\
        -\nabla_{x^2}g(x^1,x^2)
    \end{pmatrix} .
    \label{eq:saddle_operator}
\end{equation}
If $g$ is convex in $x^1$ and concave in $x^2$, then $F$ is monotone; if $g$ is additionally $\mu$-strongly convex in $x^1$ and $\mu$-strongly concave in $x^2$, then $F$ is $\mu$-strongly monotone. 
A point $x_*=(x_*^1,x_*^2)$ satisfies $F(x_*)=0$ if and only if it is a saddle point of $g$:
$g(x_*^1,x^2) \leq g(x_*^1,x_*^2) \leq g(x^1,x_*^2)$ for every $x^1\in\R^{d_1}$ and $x^2\in\R^{d_2}$. 
This operator formulation underlies much of the literature on convex-concave min-max optimization.  See, for example, \cite{nemirovskiProxmethodRateConvergence2004,nesterovDualExtrapolationIts2007,juditskySolvingVariationalInequalities2011,gorbunovStochasticExtragradientGeneral2022}. Such problems have a long history in mathematical programming and game theory and have become increasingly prominent in machine learning through applications including adversarial training \citep{MadryMakelovSchmidtTsiprasVladu2018_deep}, generative adversarial networks \citep{goodfellowGenerativeAdversarialNets2014,GidelBerardVignoudVincentLacoste-Julien2019_variational}, and distributionally robust optimization \citep{namkoongStochasticGradientMethods2016}.

The same operator viewpoint extends to smooth multiplayer games. 
Suppose each player $j\in\{1,\ldots,m\}$ controls $x^j$ to minimize a loss $\ell^j(x^j,x^{-j})$, where $x^{-j}$ denotes the decision variables of the other players. 
For $x = (x^1, \dots, x^m)$, the corresponding pseudo-gradient operator is
\begin{equation}
    F(x)
    \eqdef
    \begin{pmatrix}
        \nabla_{x^1}\ell^1(x)\\
        \vdots\\
        \nabla_{x^m}\ell^m(x)
    \end{pmatrix}.
    \label{eq:pseudogradient}
\end{equation}
In an unconstrained game in which each player's loss function $\ell^j$ is convex in their own decision variable $x^j$, the zeros of~\eqref{eq:pseudogradient} characterize Nash
equilibria. 
Games with a monotone pseudo-gradient, commonly referred to as monotone games, therefore fall within the problem formulation~\eqref{eq:root_finding} \citep{rosen1965existence,facchinei2003finite}. 
The min-max problem~\eqref{eq:min_max} is the two-player, zero-sum special case obtained by taking $\ell^1=g$ and $\ell^2=-g$. 
Multiplayer game formulations are particularly relevant in multi-agent learning, distributed decision making, and reinforcement learning \citep{zhangMultiAgentReinforcementLearning2021,sokotaUnifiedApproachReinforcement2023}. 

\paragraph{Extragradient method.} 
Although convex minimization, convex-concave min-max optimization, and monotone games all admit root-finding reformulations, their optimization dynamics are different.
Unlike in convex minimization, where $F=\nabla f$ is the gradient of a scalar potential, saddle operators or pseudo-gradients in games need not be gradients, and may generally contain rotational components.
Consequently, the direct forward iteration $x_{k+1}=x_k-\eta_k F(x_k)$ may fail to converge under monotonicity and Lipschitz continuity of $F$ alone.
A simple rotation operator $F(u,v) = (-v,u)$ on 2D is a canonical example, where the forward iteration moves away from the unique root for every positive constant step-size \citep{GidelBerardVignoudVincentLacoste-Julien2019_variational,AzizianMitliagkasLacoste-JulienGidel2020_tight}.

The Extragradient method \eqref{eq:EG} is a foundational adjustment mechanism designed for monotone inclusion problems by \citet{Korpelevich1976_extragradient}:
\begin{align}\label{eq:EG}
\begin{split}
\textstyle
    \text{Extrapolation step:} & \qquad \hx_k = x_k - \gamma_k F(x_k) \notag\\  
    \text{Update step:} & \quad x_{k+1} = x_k - \alpha_k F(\hx_k) .
\end{split}
\tag{\algname{EG}}
\end{align}
Here $\gamma_k, \alpha_k >0 $ are respectively called the \emph{extrapolation} and \emph{update} step-sizes. 
By using the operator evaluated at the extrapolated point $\hx_k$ as a ``descent direction'', \algname{EG} overcomes the outward divergence due to rotation.
This idea has had a significant impact on the min-max optimization and monotone operator literature.
Popov, optimistic-gradient, and forward-reflected methods \citep{Popov1980_modification,RakhlinSridharan2013_optimization,hsiehConvergenceSinglecallStochastic2019,gorbunovConvergenceProximalPoint2023,ChoudhuryGorbunovLoizou2023_singlecall} were developed and analyzed as a single-call variants of \algname{EG} reusing previous operator evaluations rather than evaluating a fresh new operator value at an extrapolated point. 
Mirror-Prox and dual extrapolation algorithms extend \algname{EG}-type update rules to non-Euclidean
geometries \citep{nemirovskiProxmethodRateConvergence2004,nesterovDualExtrapolationIts2007}.
Interestingly, \algname{EG} can be interpreted \cite{MokhtariOzdaglarPattathil2020_unified,MokhtariOzdaglarPattathil2020_convergence} as an approximation of the proximal point method \citep{rockafellarMonotoneOperatorsProximal1976}.
Recent accelerated algorithms combined anchoring with \algname{EG} or its single-call variants \cite{yoonAcceleratedAlgorithmsSmooth2021,LeeKim2021_fast,Tran-DinhLuo2021_halperntype,CaiZheng2023_accelerated}, and analogous interpretation of those algorithms as approximate Halpern iteration has also been established \citep{yoonAcceleratedMinimaxAlgorithms2025}.
Stochastic Extragradient (\algname{SEG}), which replaces $F(x_k)$ and $F(\hx_k)$ in the update rule of \algname{EG} with the corresponding stochastic sample operator evaluations, has also been extensively stuided \citep{juditskySolvingVariationalInequalities2011,yousefianStochasticMirrorproxAlgorithms2018,kannanOptimalStochasticExtragradient2019,GidelBerardVignoudVincentLacoste-Julien2019_variational,ChavdarovaGidelFleuretLacoste-Julien2019_reducing,mertikopoulosOptimisticMirrorDescent2019,HsiehIutzelerMalickMertikopoulos2020_explore,MishchenkoKovalevShulginRichtarikMalitsky2020_revisiting,yoon2026same}.

\paragraph{Operator regularity and classical guarantees.}
The convergence analysis and step-size selections for \algname{EG} and \algname{SEG} are commonly stated under appropriate regularity conditions on $F$ or the sample operators $F_i$.
A standard condition is the following (global) Lipschitz continuity.

\begin{definition}
    \label{def:Lipschitz}
    An operator $F$ is called $L$-Lipschitz if $\textstyle \left\|F(x) - F(y) \right\| \leq L \left\| x - y \right\|.$
\end{definition}

This is a typical assumption in the analysis of \algname{EG} and its variants
\citep{Korpelevich1976_extragradient,nemirovskiProxmethodRateConvergence2004,MokhtariOzdaglarPattathil2020_unified,GidelBerardVignoudVincentLacoste-Julien2019_variational,gorbunovLastiterateConvergenceOptimistic2022}. 
For monotone Lipschitz root-finding problems, \algname{EG} attains an $\mathcal{O}(1/K)$ ergodic gap rate, which is optimal in the general first-order oracle model up to constants
\citep{nemirovskiProxmethodRateConvergence2004,nesterovDualExtrapolationIts2007}. 
More recently, last-iterate convergence guarantees have also been established for extragradient under monotonicity and Lipschitz continuity \citep{GorbunovLoizouGidel2022_extragradient,caiFiniteTimeLastIterateConvergence2022}. Under strong monotonicity, suitable extragradient variants converge linearly with optimal condition-number dependence \citep{tseng1995linear,MokhtariOzdaglarPattathil2020_unified,AzizianMitliagkasLacoste-JulienGidel2020_tight,GorbunovLoizouGidel2022_extragradient}. 
These guarantees, together with its explicit first-order structure and direct stochastic extension, make \algname{EG} a natural algorithmic foundation for the problems considered here.

Global Lipschitz continuity can nevertheless be restrictive—the local variation of the operator may not be uniformly bounded. 
To cover a broader range of problems, we also consider the following generalized regularity classes.

\begin{definition}[H\"older continuity]
\label{def:Holder-continuity}
We say that $F$ is $(L,\holderexponent)$-H\"older (continuous) for some $0 < \holderexponent \le 1$ if, for all $x,y$,
\[
    \norm{F(x)-F(y)} \le L\norm{x-y}^\holderexponent .
\]
\end{definition}

H\"older continuity is a natural generalization of Lipschitz continuity (\cref{def:Lipschitz}), which is recovered by setting $\holderexponent=1$ in \cref{def:Holder-continuity}.
H\"older-continuous monotone variational inequalities have been studied using Mirror-Prox and \algname{EG} methods~\citep{dangConvergencePropertiesNonEuclidean2015}, including universal variants that do not require prior knowledge of the H\"older exponent or constant~\citep{stonyakinGeneralizedMirrorProx2022,klimza2024universal}. 
Recent work has also developed splitting methods for H\"older-continuous monotone inclusions~\citep{zhang2026convergence}.

\begin{definition}[$(L_0,L_1)$-Lipschitzness]
\label{def:L0L1-Lipschitz}
We say that $F$ is $(L_0,L_1)$-Lipschitz if, for all $x,y$,
\[
    \norm{F(x)-F(y)}
    \le
    \left(
        L_0+
        L_1\max_{\theta\in[0,1]}
        \norm{F(\theta x+(1-\theta)y)}
    \right)
    \norm{x-y}.
\]
\end{definition}

The $(L_0,L_1)$-Lipschitz condition recovers global $L$-Lipschitz continuity by taking $L_0=L$ and $L_1=0$, while also allowing the local variation of the operator to grow with the operator's magnitude.
Such problem classes have recently been proposed in the context of minimization \citep{zhangWhyGradientClipping2020} and quickly gained popularity; then they have been extended to variational inequalities, and root-finding problems \citep{vankovGeneralizedSmoothVariational2024,
choudhury2025extragradient}. 
In fact, \cref{def:L0L1-Lipschitz} implies the following bound \cite[Proposition~3.1]{choudhury2025extragradient} for any $x,y$, which is more useful for convergence analyses:
\begin{align}
\label{eq:L0L1-alpha1-characterization}
    \norm{F(x) - F(y)} \le \left( L_0 + L_1 \norm{F(x)} \right) \exp \left( L_1 \norm{x - y} \right) \norm{x - y} .
\end{align}

Note that monotonicity and regularity play different roles in our analysis.
Monotonicity is the principal structural condition that enables the derivation of descent property, whereas conditions like Lipschitzness, H\"older continuity or $(L_0,L_1)$-Lipschitzness quantitatively determine the range of stable step-sizes.

\paragraph{Adaptive step-sizes and Polyak's principle.}
Typical analyses of \algname{EG} and \algname{SEG} select $\gamma_k$ and $\alpha_k$ using the global Lipschitz constant of $F$ \citep{nemirovskiProxmethodRateConvergence2004,juditskySolvingVariationalInequalities2011,
GidelBerardVignoudVincentLacoste-Julien2019_variational,MishchenkoKovalevShulginRichtarikMalitsky2020_revisiting,diakonikolasEfficientMethodsStructured2021,GorbunovLoizouGidel2022_extragradient,gorbunovStochasticExtragradientGeneral2022}. 
Similarly, known step-size rules for \algname{EG} for H\"older-continuous or $(L_0,L_1)$-Lipschitz operators depend on the problem constants \citep{dangConvergencePropertiesNonEuclidean2015,vankovGeneralizedSmoothVariational2024,choudhury2025extragradient}. 
However, these parameters may be unknown, difficult to estimate, or overly conservative compared to the local geometry along the algorithm trajectory in practice.
This motivated the adaptive extragradient schemes, including the AdaGrad-type step-sizes~\citep{antonakopoulosAdaptiveExtragradientMethods2021, antonakopoulosSiftingNoiseUniversal2021}.
More classically, projection-based corrections to the update step of \algname{EG} were considered in \cite{solodovModifiedProjectiontypeMethods1996,sunClassIterativeMethods1996,iusemVariantKorpelevichsMethod1997,solodovNewProjectionMethod1999} and revisited in \cite{pethickEscapingLimitCycles2022}.
The goal of this work is aligned with these prior work: \textit{to design efficient variants of \algname{EG} and \algname{SEG} that adapts to the operator geometry without the knowledge of problem-defining parameters.}

To this end, we draw inspiration from the observation that the projection-type update for \algname{EG} studied in \cite{sunClassIterativeMethods1996,iusemVariantKorpelevichsMethod1997,solodovNewProjectionMethod1999,pethickEscapingLimitCycles2022} admits the interpretation as Polyak step-size, which was originally developed for gradient and subgradient methods in convex minimization.
More precisely, Polyak step-size in minimization is derived by minimizing a one-step upper bound on the distance to an optimum, and we show that the update step-size $\alpha_k$ in \algname{EG} can be selected with the same principle. 
This eventually agrees with $\alpha_k$ chosen via the projection interpretaion in prior work.
However, we can leverage the Polyak viewpoint to further design and analyze Polyak-type \textit{stochastic}  \algname{EG} methods, following the recent development of stochastic Polyak step-sizes from stochastic minimization literature \citep{loizouStochasticPolyakStepsize2021,orvietoDynamicsSGDStochastic2022,dorazioStochasticMirrorDescent2023,gower2021stochastic,GowerSebbouhLoizou2021_sgd, gower2025analysis,oikonomouStochasticPolyakStepsizes2025,oikonomou2025safeguarded,oikonomou2026adaptive,oikonomou2025taking}.
Table~\ref{tab:polyak_step_stochastic} summarizes the analogy between Polyak-type step-sizes in minimization and root-finding problems, and their corresponding stochastic and decreasing stochastic counterparts. 

Now we summarize our main technical contributions as follows.

\begin{table*}[!t]
        \centering
        \caption{\small
        Correspondence between Polyak-type step-sizes for gradient descent in minimization and the extragradient algorithm in root finding.
        }
        \label{tab:polyak_step_stochastic}
        \vspace{-2mm}
        \begin{threeparttable}
        \resizebox{1\textwidth}{!}{%
            \begin{tabular}{|c|c|c|}
            \hline
            Setup & \begin{tabular}{c}
                 Minimization: $\min_x \, f(x)$\\
                Gradient descent
            \end{tabular} & \begin{tabular}{c}
                 Root-finding: $\text{find}_x \, F(x) = 0$\\
                Extragradient \\
            \end{tabular} 
            \\
            \hline\hline
            \rule{0pt}{15pt} Deterministic & \begin{tabular}{c}
                $\eta_k = \frac{f(x_k) - \min_x f(x)}{ \left\| \nabla f\left(x_k \right) \right\|^2}$ \\ Polyak step-size~\citep{polyak1987introduction}
            \end{tabular} & \cellcolor{bgcolor2} \begin{tabular}{c}
                 $\alpha_k = \frac{\la F(\hx_k), x_k - \hx_k \ra}{\left\|F(\hx_k) \right\|^2}$ \\ \algname{PolyakEG}: \cref{theorem:deterministic-master-theorem}
            \end{tabular}  \\[5pt]
            \hline
            \rule{0pt}{20pt} \begin{tabular}{c} Stochastic \\ Interpolated \end{tabular} & \begin{tabular}{c}
                $\eta_k = \frac{f_{\mathcal{S}_k}(x_k) - \min_x f_{\mathcal{S}_k}(x)}{ \left\| \nabla f_{\mathcal{S}_k} \left(x_k \right) \right\|^2}$ \\ Stochastic Polyak step-size (\algname{SPS})~\citep{loizouStochasticPolyakStepsize2021}
            \end{tabular} & \cellcolor{bgcolor2} \begin{tabular}{c}
                 $\alpha_k = \frac{\la F_{\mathcal{S}_k}(\hx_k), x_k - \hx_k \ra}{\left\|F_{\mathcal{S}_k}(\hx_k) \right\|^2}$ \\ \algname{PolyakSEG}: \cref{theorem:PolyakSEG-convergence1}
            \end{tabular}  \\[10pt]
            \hline
            \rule{0pt}{20pt} Stochastic & \begin{tabular}{c}
                $\eta_k = \frac{1}{c_{k+1}} \min \left\{ \frac{f_{\mathcal{S}_k}(x_k) - \min_x f_{\mathcal{S}_k}(x)}{c \left\| \nabla f_{\mathcal{S}_k}(x_k) \right\|^2}, c_k \eta_{k-1}\right\}$ \\ Decreasing SPS (\algname{DecSPS})~\citep{orvietoDynamicsSGDStochastic2022}
            \end{tabular} &  \cellcolor{bgcolor2} \begin{tabular}{c} $\alpha_k = \min \left\{ \frac{\la F_{\mathcal{S}_k}(\hx_k), x_k - \hx_k \ra}{ \left\|F_{\mathcal{S}_k}(\hx_k) \right\|^2}, \alpha_{k-1}\right\}$ \\ \algname{DecPolyakSEG}: \cref{theorem:DecPolyakSEGLS_monotone} \end{tabular} \\[10pt]
            \hline
        \end{tabular}%
        }
        \end{threeparttable}
\end{table*}

\subsection{Main Contributions}

\begin{table}[t]
    \centering
    \small
    \begin{threeparttable}
        \caption{\small
            Comparison with representative prior work on deterministic extragradient-type algorithms. 
            The ``H\"older rates'' and ``$(L_0, L_1)$ rates'' columns indicate whether an explicit nonasymptotic analyses under the respective regularity conditions (beyond global Lipschitz continuity) were provided.
        }
        \label{tab:deterministic_regularity_comparison}

        \setlength{\tabcolsep}{6pt}
        \renewcommand{\arraystretch}{1.25}
        \renewcommand{\tabularxcolumn}[1]{m{#1}}

        \begin{tabularx}{\linewidth}{@{}
            >{\raggedright\arraybackslash}X
            *{3}{>{\centering\arraybackslash}m{0.13\linewidth}}
        @{}}
            \toprule
            Representative works
            & \shortstack{H\"older\\rates}
            & \shortstack{$(L_0,L_1)$\\rates}
            & \shortstack{Polyak-type\\correction} \\
            \midrule

            \textbf{Projection-based step-sizes for EG}\newline
            \citet{sunClassIterativeMethods1996};
            \citet{iusemVariantKorpelevichsMethod1997};
            \citet{solodovNewProjectionMethod1999}\tnote{a};
            \citet{pethickEscapingLimitCycles2022}
            & \xmark & \xmark & \cmark \\
            \addlinespace[4pt]

            \textbf{H\"older-continuous EG and Mirror-Prox}\newline
            \citet{dangConvergencePropertiesNonEuclidean2015};
            \citet{stonyakinGeneralizedMirrorProx2022};
            \citet{klimza2024universal}
            & \cmark & \xmark & \xmark \\
            \addlinespace[4pt]

            \textbf{EG for $(L_0, L_1)$-Lipschitz problems}\newline
            \citet{vankovGeneralizedSmoothVariational2024}\tnote{b};
            \citet{choudhury2025extragradient}
            & \xmark & \cmark & \xmark \\
            \midrule

            \rowcolor{bgcolor2}
            \textbf{\algname{PolyakEG} / \algname{PolyakEG-LS} (this work)}
            & \cmark & \cmark & \cmark \\
            \bottomrule
        \end{tabularx}

        \begin{tablenotes}[flushleft]
            \footnotesize

            \item[a] \cite{sunClassIterativeMethods1996, iusemVariantKorpelevichsMethod1997, solodovNewProjectionMethod1999} prove convergence under (generalized) monotonicity and continuity, but without explicit rates for H\"older-continuous, $(L_0,L_1)$-Lipschitz or stochastic problems. 

            \item[b] \citet{vankovGeneralizedSmoothVariational2024} uses a condition called $p$-quasi-sharpness in their convergence analysis.
        \end{tablenotes}
    \end{threeparttable}
\end{table}

\begin{itemize}[leftmargin=*] \setlength{\itemsep}{2pt} 
\item \textbf{\algname{PolyakEG}: Polyak-inspired Extragradient Update Step-size.} 
In Section~\ref{sec:motivation}, we derive the update step-size $ \alpha_k = \frac{ \la F(\hx_k),x_k-\hx_k\ra }{ \|F(\hx_k)\|^2 }$ via Polyak-type derivation, by minimizing an upper bound on $\|x_{k+1}-x_*\|^2$, in direct analogy with the classical Polyak step-size for gradient methods.
We then show that the projection-based correction for \algname{EG} introduced by \cite{sunClassIterativeMethods1996,iusemVariantKorpelevichsMethod1997,solodovNewProjectionMethod1999,pethickEscapingLimitCycles2022} yields the same formula. 
This interpretation serves as a basis for the design and analysis principle for the stochastic extensions developed later in the paper.

\item \textbf{Unified deterministic analysis.}
    We provide a unified convergence result (\cref{theorem:deterministic-master-theorem}) for \algname{PolyakEG} (\cref{alg:PolyakEG}) based on a critical condition (\cref{def:critical-condition-deterministic}) controlling the variation of the operator along the extrapolation step. 
    The analysis does not require global Lipschitz continuity and simultaneously captures distinct regularity conditions, and provides sublinear residual convergence under monotonicity and linear convergence under strong monotonicity.

    \begin{itemize}[leftmargin=*]
        \setlength{\itemsep}{1pt}

        \item
        \textbf{$L$-Lipschitz, $(L,\holderexponent)$-H\"older and $(L_0,L_1)$-Lipschitz operators.}
        We show that the critical condition accommodates constant extrapolation steps for globally Lipschitz operators and adaptive extrapolation steps for H\"older-continuous or
        $(L_0,L_1)$-Lipschitz operators. 
        Consequently, we newly provide explicit quantitative guarantees for the last two problem classes (see \cref{tab:deterministic_regularity_comparison}).

        \item
        \textbf{Line-search implementations.}
        We develop a unified line-search variant called \algname{PolyakEG-LS} (\cref{alg:PolyakEG_linesearch_unified}) encompassing Lipschitz, H\"older-continuous and $(L_0,L_1)$-Lipschitz operators. 
        This algorithm directly enforces the critical condition via backtracking, and does not require prior knowledge of $L, \holderexponent, L_0$ or $L_1$. 
        In all settings, we bound the total complexity of backtracking trials, which shows that \algname{PolyakEG-LS} retains the complexity comparable to choosing $\gamma_k$ explicitly based on the problem-defining constants, while being parameter-free (\cref{theorem:L0L1-linesearch-deterministic}).
    \end{itemize}

 \item
    \textbf{Stochastic extensions.}
    We extend \algname{PolyakEG} to the stochastic root-finding problems with finite-sum structure $F(x)=\frac{1}{n} \sum_{i=1}^n F_i(x)$, where we draw a mini-batch $\cS_k \subseteq [n]$ with replacement at each iteration $k$, and use the mini-batch operator $F_{\cS_k} = \frac{1}{|\cS_k|} \sum_{i \in \cS_k} F_i$ for making the updates. 
    In particular, we propose and analyze two algorithms: \algname{PolyakSEG} (Algorithm~\ref{alg:PolyakSEG}) and \algname{DecPolyakSEG} (Algorithm~\ref{alg:decPolyakSEG}). 
    
    \begin{itemize}[leftmargin=*]
        \setlength{\itemsep}{1pt}

        \item When all sample operators are monotone and share a common solution, we show that         \algname{PolyakSEG} achieves a $\mathcal O(1/K)$ sublinear convergence on squared residual norm. 
        When the expected strong-monotonicity modulus is positive, we additionally establish linear convergence (\cref{theorem:PolyakSEG-convergence1}). 
        We further show by an example that, with nonvanishing step-sizes, the interpolation (existence of a common solution) assumption cannot in general be removed (\cref{prop:PolyakSEG-noninterpolation}).

        \item
        For the general stochastic regime, we introduce \hyperref[alg:decPolyakSEG]{\algname{DecPolyakSEG}}, which combines decreasing extrapolation and update step-sizes with Polyak-type update. 
        When $F$ is monotone and each $F_i$ is Lipschitz, we establish an $\mathcal O(K^{-1/2})$ sublinear convergence on the expected squared residual under a trajectory localization condition (\cref{corollary:DecPolyakSEG-explicit-residual-rate}). 
        We show that this rate is attained both by a predetermined decreasing-step schedule and by a line-search variation. 
        We show that the localization condition can be guaranteed with additional problem structures such as strong monotonicity of the sample operators (\cref{proposition:DecPolyakSEG_strong_monotone}).
    \end{itemize}

 \item
    \textbf{Numerical experiments.}
    In Section~\ref{sec:numerical_experiment}, we evaluate the proposed Polyak-type algorithms on several representative root-finding and min-max problems. 
    Our experiments examine the benefit of Polyak-type correction in \algname{EG}, uniformly competitive performance over globally Lipschitz, H\"older-continuous and $(L_0,L_1)$-Lipschitz settings, and the promising empirical performance of on stochastic problems.
\end{itemize}

\section{Polyak-Type Update for Extragradient: Derivation and Geometry}
\label{sec:motivation}

In this section, we derive the \algname{PolyakEG} update step-size, following the viewpoint of \citet{polyak1987introduction}.
This is done by optimizing the progression toward a solution within the standard one-step upper bound in \algname{EG}.
Then we clarify its connection to the projection interpretation onto separating halfspace introduced in prior work \cite{sunClassIterativeMethods1996, iusemVariantKorpelevichsMethod1997, solodovNewProjectionMethod1999}.

\subsection{Derivation of the Polyak Update Step-size}

We first revisit how the classical Polyak step-size is derived for Gradient Descent, and then take the analogous approach to provide for \algname{EG}. 

\paragraph{Polyak step-size for Gradient Descent.} Gradient Descent (\algname{GD}) is arguably the most widely recognized method for minimization problems: $\min_{x \in \R^d} f(x)$.
\algname{GD} updates iterates by: $x_{k+1} = x_k - \eta_k \nabla f(x_k)$, where $\eta_k$ is the step-size. 
Classical convergence analysis for convex, $L$-smooth function $f(x)$ (whose gradient $\nabla f$ is $L$-Lipschitz) assumes a constant step-size $\eta_k \le \nicefrac{1}{L}$.

On the other hand, \citet{polyak1987introduction} proposed an adaptive strategy of choosing $\eta_k$ that minimizes the upper bound of $\|x_{k+1}-x_*\|^2$ for $x_* \in \argmin_{x\in\reals^d} f(x)$:
$$ \|x_{k+1} - x_*\|^2 \leq \|x_k - x_*\|^2 - 2 \eta_k \left( f(x_k) - f(x_*) \right) + \eta_k^2 \| \nabla f(x_k)\|^2 $$ 
which follows from the update rule of \algname{GD} and convexity of $f$. 
Unless $x_k$ is already a minimizer of $f$ so that $\nabla f(x_k) = 0$, the right-hand side is minimized with \begin{equation}
\label{Polyakminimization}
    \eta_k = \frac{f(x_k) - f(x_*)}{\| \nabla f(x_k)\|^2},
\end{equation}
which is called the \textit{Polyak step-size} for \algname{GD}. 
The step-size \eqref{Polyakminimization} is computable \textit{only when the optimal function value $f(x_*)$ is accessible}. 
While this seems to be a strong requirement, there are several classes of important problems where $f(x_*)=0$ is known even if $x_*$ is unknown, such as consistent linear systems, convex feasibility problems or overparametrized learning problems~\citep{polyak1987introduction, hazan2019revisiting, loizouStochasticPolyakStepsize2021}.

\paragraph{\textbf{Polyak step-size for Extragradient.}}
We can follow the analogous strategy of optimally selecting the update step-size $\alpha_k$ for \hyperref[eq:EG]{\algname{EG}}.
Fix $x_*\in\mathcal X_*$, and recall $\hx_k=x_k-\gamma_kF(x_k)$. 
For any $\alpha_k \geq 0$, the update $x_{k+1}=x_k-\alpha_kF(\hx_k)$ satisfies:
\begin{align*}
    \sqnorm{x_{k+1} - x_*} & = \sqnorm{x_k - x_* } - 2\alpha_k \inprod{F(\hx_k)}{x_k - \hx_k} - 2\alpha_k \inprod{F(\hx_k)}{\hx_k - x_*} + \alpha_k^2 \sqnorm{F(\hx_k)} \\
    & \le \sqnorm{x_k - x_* } - 2\alpha_k \inprod{F(\hx_k)}{x_k - \hx_k} + \alpha_k^2 \sqnorm{F(\hx_k)}
\end{align*}
where the second lines uses monotonicity of $F$.
Now, following similar approach to \citet{polyak1987introduction}, we minimize the bound on the right hand side with respect to $\alpha_k \ge 0$, which is attained with $\alpha_k = \frac{[\la F(\hx_k), x_k - \hx_k \ra]_+}{\| F(\hx_k)\|^2}$, where $[t]_+\eqdef\max\{t,0\}$. 
Later in Section in Section~\ref{sec:conv_ana1}, we show that with appropriate choice of $\gamma_k$, we have $\la F(\hx_k), x_k - \hx_k \ra>0$ unless $\hx_k$ is a solution where $F(\hx_k) = 0$. 
Then we can remove the operation $[\cdot]_+$, and the update step-size simply becomes
\begin{equation}
\label{eq:polyakEG-step-size}
    \boxed{\alpha_k = \frac{\la F(\hx_k), x_k - \hx_k \ra}{\| F(\hx_k)\|^2}.}
\end{equation}
We highlight that this derivation inherits the spirit of Polyak step-size for \algname{GD}, but \textit{does not require the knowledge of optimal function values or other inaccessible quantities}. 

We denote \algname{EG} with update step-size~\eqref{eq:polyakEG-step-size} by \hyperref[alg:PolyakEG]{\algname{PolyakEG}} (Algorithm \ref{alg:PolyakEG}). 
We highlight once again that this step-size selection is not new: early works including \cite{sunClassIterativeMethods1996, iusemVariantKorpelevichsMethod1997, solodovNewProjectionMethod1999} considered a version of it, and some recent works also studied it~\citep{giselssonNonlinearForwardBackwardSplitting2021,pethickEscapingLimitCycles2022}. 
However, to our knowledge, the interpretation of~\eqref{eq:polyakEG-step-size} as a Polyak-type step-size and the connection between the two literature has not been established before. 
In our work, we leverage this new insight to extend these ideas to stochastic settings and design new algorithms.

\begin{algorithm}[H]
\small
    \caption{\algname{\textcolor{PineGreen}{PolyakEG}}}
    \label{alg:PolyakEG}
    \begin{algorithmic}[1]
        \REQUIRE Initial point $x_0 \in \R^d$ and a rule for choosing extrapolation steps $\{\gamma_k\}_{k=0}^\infty$.
        \FOR{$k = 0, 1,...,K$}
        \STATE $\hx_k = x_k - \gamma_k F(x_k)$.
        \vspace{2mm}
        \STATE \colorbox{green!20}{$\alpha_k = \frac{\la F(\hx_k), x_k - \hx_k \ra}{\|F(\hx_k)\|^2}$.}
        \vspace{2mm}
        \STATE $x_{k+1} = x_k - \alpha_k F(\hx_k)$.
        \ENDFOR
    \end{algorithmic}
\end{algorithm}

\begin{tikzfigure}[H]
    \resizebox{0.6\linewidth}{!}{%
    \begin{tikzpicture}[
        x=1cm, y=1cm,
        font=\normalsize,
        line cap=round,
        line join=round,
        >={Stealth[length=2.1mm,width=1.4mm]}
    ]
        \definecolor{pegGreen}{RGB}{48,132,55}
        \definecolor{pegShade}{RGB}{235,243,236}
        \definecolor{pegRed}{RGB}{225,49,48}
        \definecolor{pegBlue}{RGB}{0,92,190}

        \coordinate (Htop)    at (3.70, 3.95);
        \coordinate (Hbottom) at (7.20,-2.70);
        \coordinate (xh)      at (6.70,-1.75);
        \coordinate (xk)      at (0,0);
        \coordinate (xs)      at (6.85,0.87);

        \coordinate (xnext) at ($(Htop)!(xk)!(Hbottom)$);

        \coordinate (normalTip)
            at ($(xh)+0.26*(xk)-0.26*(xnext)$);

        \fill[pegShade]
            (Htop) -- (8.75,3.95) -- (8.75,-2.70)
            -- (Hbottom) -- cycle;

        \draw[
            pegGreen,
            line width=0.75pt,
            dash pattern=on 3pt off 3pt
        ] (Htop) -- (Hbottom);

        \begin{scope}[shift={(xs)}]
            \path[
                draw=pegBlue,
                fill=pegBlue!12!pegShade,
                line width=0.7pt
            ]
                (-0.82,-0.18)
                .. controls (-0.82, 0.28) and (-0.34, 0.63) .. (0.21, 0.63)
                .. controls ( 0.79, 0.63) and ( 1.30, 0.36) .. (1.30,-0.09)
                .. controls ( 1.30,-0.55) and ( 0.92,-0.89) .. (0.34,-0.89)
                .. controls (-0.26,-0.89) and (-0.82,-0.64) .. cycle;
        
            \node[pegBlue] at (0.60,-0.50) {$\cX_*$};
        \end{scope}

        \draw[gray,line width=0.85pt,->]
            (xk) -- (xh);

        \node at (2.80,-1.20)
            {$-\gamma_k F(x_k)$};

        \draw[pegRed,line width=0.8pt,->]
            (xk) -- (xnext);

        \node[pegRed,anchor=south] at (1.75,1.55)
            {$-\dfrac{\langle F(\hx_k),x_k-\hx_k\rangle}
                      {\|F(\hx_k)\|^2}\,F(\hx_k)$};

        \draw[pegRed,line width=0.8pt,->]
            (xh) -- (normalTip)
            node[midway,below=4pt] {$F(\hx_k)$};

        \node[pegGreen,anchor=west] at (4.94,1.87)
            {$\langle F(\hx_k),x-\hx_k\rangle\leq 0$};

        \foreach \p in {xk,xh,xnext,xs}
            \filldraw[
                draw=black,
                fill=pegBlue,
                line width=0.45pt
            ] (\p) circle[radius=1.25pt];

        \node[pegBlue,below=3pt]
            at (xk) {$x_k$};

        \node[pegBlue,anchor=west]
            at ($(xh)+(0.10,-0.10)$) {$\hx_k$};

        \node[pegBlue,above right=2pt]
            at ($(xnext)+(-0.10,-0.10)$) {$x_{k+1}$};

        \node[pegBlue,right=3pt]
            at (xs) {$x_*$};

    \end{tikzpicture}%
    }

    \caption{\small
        Geometric interpretation of \algname{PolyakEG}.
        The extrapolated point $\hx_k=x_k-\gamma_k F(x_k)$ defines a halfspace $\mathcal D(\hx_k)$ (colored in green), containing the solution set $\cX_*$ (colored in blue).
        The vector $F(\hx_k)$ is an outward normal to $\mathcal D(\hx_k)$.
        \algname{PolyakEG} projects the current iterate $x_k$ onto $\mathcal{D}(\hx_k)$, i.e., $x_{k+1}=\Pi_{\mathcal D(\hx_k)}(x_k)$.
    }
    \label{fig:polyakEG_geometric1}
\end{tikzfigure}

\subsection{Geometric Interpretation of Polyak Step-size for Extragradient}

The preceding derivation characterizes \algname{PolyakEG} through a one-step distance minimization. 
\Cref{fig:polyakEG_geometric1} provides an alternative geometric illustration of \algname{PolyakEG} due to \citet{solodovNewProjectionMethod1999}.
Given the extrapolation point $\hx_k$, assuming $F(\hx_k) \neq 0$, consider the 
\begin{eqnarray*}
    \mathcal{D} \left(\hx_k \right) \eqdef \left\{ x \in \R^d \mid \la F(\hx_k), \hx_k - x \ra \geq 0 \right\} .
\end{eqnarray*}
This is a halfspace, depicted as the green-shaded region in \cref{fig:polyakEG_geometric1}, which contains any solution $x_* \in \cX_*$, by monotonicity of $F$.
Then, clearly, $x_{k+1} = x_k - \frac{\la F(\hx_k), x_k - \hx_k \ra}{\|F(\hx_k)\|^2} F(\hx_k)$ of \algname{PolyakEG} is a projection onto this halfspace, provided that $\la F(\hx_k), x_k - \hx_k \ra \ge 0$.
This aligns with the intuition that $x_{k+1} = x_k - \alpha_k F(\hx_k)$ for $\alpha_k$ that minimizes an upper bound on $\sqnorm{x_{k+1} - x_*}$; near $\hx_k$, by only using the local observation $F(\hx_k)$ and the fact that $F$ is monotone, the halfspace $\mathcal{D} \left(\hx_k \right)$ is the best geometric estimate of the region containing $\cX_*$. 
As \algname{EG} starts from $x_k$ and updates it in the direction of $-F(\hx_k)$, and selecting the point where the update direction meets the boundary of $\mathcal{D} \left(\hx_k \right)$ is the geometrically tightest choice.

\section{Deterministic Setting}
\label{sec:conv_ana1}

In this section, we provide an analysis of \hyperref[alg:PolyakEG]{\algname{PolyakEG}} in the deterministic case. 
We provide a unified theorem under a critical condition, which captures a general requirement on the extrapolation step-size $\gamma_k$ for which \algname{PolyakEG} update step-size $\alpha_k$ is effective.
As a result, our main convergence theorem provides guarantees for \algname{PolyakEG} without assuming  global Lipschitz continuity of $F$; in particular, it covers the broader classes of $(L,\holderexponent)$-H\"older or $(L_0, L_1)$-Lipschitz operators under the same unified framework.
Depending on the problem class, $\gamma_k$ can be constant, adaptive, or determined via line-search so that the critical condition is satisfied.

\subsection{Critical Condition for $\gamma_k$}
While \hyperref[alg:PolyakEG]{\algname{PolyakEG}} (\Cref{alg:PolyakEG}) specifies the update step-size $\alpha_k$, it does not specify the extrapolation step-size $\gamma_k$.  
In our approach, we propose a critical condition that captures the behavior of several choices for $\gamma_k$. 

\begin{definition} 
\label{def:critical-condition-deterministic}
We say $\gamma_k > 0$ satisfies the critical condition for some $A \in (0, 1]$ if
\begin{equation}\label{eqn:gamma-critical-condition}
    \norm{F(\hat{x}_k) - F(x_k)} \le A \norm{F(x_k)}
\end{equation}
for $ \hx_k = x_k - \gamma_k F(x_k)$.
\end{definition}
The importance of the critical condition~\eqref{eqn:gamma-critical-condition} for selecting the extrapolation step-size $\gamma_k$ is that it provides a natural lower bound on the update step-size $\alpha_k$, thereby guaranteeing sufficient progress per iteration. 
We defer the proof of \cref{lemma:alpha-lower-bound} to \cref{sec:stochastic}, where we prove its generalized version, \cref{lemma:PolyakSEG-LS-key-properties}.

\begin{lemma}
\label{lemma:alpha-lower-bound}
If the critical condition \eqref{eqn:gamma-critical-condition} is satisfied with $A \in (0,1]$, then we have $$\textstyle \inprod{F(\hat{x}_k)}{F(x_k)} \ge \frac{1}{2} \left( \sqnorm{F(\hat{x}_k)} + (1 - A^2) \sqnorm{F(x_k)} \right)$$
which implies $\alpha_k \ge \frac{\gamma_k}{1+A}$.
\end{lemma}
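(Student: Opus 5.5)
We will derive the inner-product bound by squaring the critical condition \eqref{eqn:gamma-critical-condition} and expanding. Write $a \eqdef F(\hx_k)$ and $b \eqdef F(x_k)$. Squaring $\norm{a-b}\le A\norm{b}$ and expanding the left-hand side gives
\[
\sqnorm{a} - 2\inprod{a}{b} + \sqnorm{b} \le A^2 \sqnorm{b}.
\]
Rearranging yields $2\inprod{a}{b} \ge \sqnorm{a} + (1-A^2)\sqnorm{b}$, which is exactly the first claim. No monotonicity is needed for this step.

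For the lower bound on $\alpha_k$, we first substitute $x_k - \hx_k = \gamma_k F(x_k)$ into \eqref{eq:polyakEG-step-size}, obtaining $\alpha_k = \gamma_k \inprod{a}{b}/\sqnorm{a}$ (assuming $a\neq 0$, as the step-size requires). It then suffices to show $\inprod{a}{b} \ge \sqnorm{a}/(1+A)$.

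This is the one step needing care: the first claim alone only gives $\inprod{a}{b}\ge \tfrac12\sqnorm{a}$, which is too weak when $A<1$. We therefore also need to relate $\norm{b}$ to $\norm{a}$. By the triangle inequality and the critical condition, $\norm{a}\le \norm{b}+\norm{a-b}\le (1+A)\norm{b}$, so $\sqnorm{b}\ge \sqnorm{a}/(1+A)^2$. Plugging this into the first claim (valid since $1-A^2\ge0$) gives
\[
\inprod{a}{b} \ge \tfrac12\sqnorm{a}\Big(1 + \tfrac{1-A^2}{(1+A)^2}\Big) = \tfrac12\sqnorm{a}\Big(1+\tfrac{1-A}{1+A}\Big) = \frac{\sqnorm{a}}{1+A}.
\]
Hence $\alpha_k \ge \gamma_k/(1+A)$. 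In particular $\inprod{a}{b}>0$, which justifies dropping the $[\cdot]_+$ in the derivation of \eqref{eq:polyakEG-step-size}.
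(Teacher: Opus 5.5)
Your proof is correct and follows essentially the same route as the paper, which proves this via Lemma~\ref{lemma:PolyakSEG-LS-key-properties}: expand $\sqnorm{F(\hx_k)-F(x_k)} \le A^2\sqnorm{F(x_k)}$ (the paper phrases this as the polarization identity) to get the first inequality, then combine it with $\norm{F(\hx_k)}\le(1+A)\norm{F(x_k)}$ to obtain $\inprod{F(\hx_k)}{F(x_k)}\ge \sqnorm{F(\hx_k)}/(1+A)$ and hence $\alpha_k\ge\gamma_k/(1+A)$. You also note that this step only needs $1-A^2\ge 0$, which confirms that the bound covers $A=1$, even though the paper's general lemma states parts (b)--(c) only for $A<1$.
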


Note that the critical condition \eqref{eqn:gamma-critical-condition} relies solely on the local geometry of $F$ near $x_k$.
For example, under $L$-Lipschitzness (Definition~\ref{def:Lipschitz}), $0 < \gamma_k \le \frac{A}{L}$ will satisfy \eqref{eqn:gamma-critical-condition}, but a larger $\gamma_k$ might be admissible if the local Lipschitz constant of $F$ is smaller near $x_k$.
This illustrates the flexibility of \eqref{eqn:gamma-critical-condition} to include different choices of $\gamma_k$. 

\subsubsection{\algname{PolyakEG-LS}: Parameter-free variant with backtracking line-search}

\cref{prop:gamma-critical-condition-sufficient} shows several cases where $\gamma_k$ can be chosen based on the parameters determining the problem class and $A$. 
On the other hand, we can simply enforce the critical condition to hold via taking it as an escaping criterion in backtracking line-search, as in \cref{alg:PolyakEG_linesearch_unified} (\algname{PolyakEG-LS}).
\algname{PolyakEG-LS} is parameter-free in the sense that it works for any one of $L$-Lipschitz, $(L,\holderexponent)$-H\"older or $(L_0,L_1)$-Lipschitz settings without requiring the knowledge of parameters such as $L, \holderexponent, L_0$ or $L_1$.
Its design is inspired by \cite{vyguzov2026frankwolfealgorithmsl0l1smooth} for $(L_0, L_1)$-smooth minimization, using the similar idea of increasing the estimate $\lambda_k^j$ for $L_j$ (for $j=0,1$) more aggressively when it contributes less to the denominator $\lambda_k^0 + \lambda_k^1 \norm{F(x_k)}$, while taking turns to adjust each estimate via the `toggle' variable.
While we only allow $\lambda_k^j$ to increase and $\gamma_k$ to consequently decrease for simplicity of exposition, one can easily modify the line-search incorporate decreasing $\lambda_k^j$ as in \cite{vyguzov2026frankwolfealgorithmsl0l1smooth} or \cite{nesterovGradientMethodsMinimizing2013}.

When the problem is $L$-Lipschitz or $(L,\holderexponent)$-H\"older, we can simply set $\lambda_{-1}^1 = 0$ and all $\textrm{toggle}=1$ cases can be safely ignored.
In that case, the line-search loop will simply shrink the step-size by the factor of $\beta - 1$ until the critical condition is satisfied.

\begin{algorithm}[t]
\small
    \caption{\algname{\textcolor{PineGreen}{PolyakEG-LS}}}
    \label{alg:PolyakEG_linesearch_unified}
    \begin{algorithmic}[1]
        \REQUIRE Initial point $x_0 \in \R^d$, initial line-search estimates $\lambda_{-1}^0 > 0, \lambda_{-1}^1 \ge 0$, line-search factor $\beta > 2$, $0 < A \le 1$ and $\nu_A > 0$ such that $\nu_A e^{\nu_A} \le A$
        \STATE $\textrm{toggle} = 0$
        \FOR{$k = 0, 1,...,K$}
        \IF{$F(x_k) = 0$}
        \RETURN $x_k$
        \ENDIF
        \STATE $\lambda_k^0 = \lambda_{k-1}^0$ and $\lambda_k^1 = \lambda_{k-1}^1$
        \STATE $\gamma_k = \frac{\nu_A}{\lambda_k^0+\lambda_k^1\norm{F(x_k)}}$
        \STATE $\hx_k = x_k - \gamma_k F(x_k)$.
        \WHILE{$\| F(x_k) - F(\hx_k) \| > A \|F(x_k)\|$}
            \IF{$\textrm{toggle}=0$}
                \STATE 
                $\lambda_k^0=\lambda_k^0\left(\beta-\frac{\lambda_k^0}{\lambda_k^0+\lambda_k^1\|F(x_k)\|}\right)$ and $\textrm{toggle}=1$
            \ELSIF{$\textrm{toggle}=1$}
                \STATE
                $\lambda_k^1=\lambda_k^1\left(\beta-\frac{\lambda_k^1\|F(x_k)\|}{\lambda_k^0+\lambda_k^1\|F(x_k)\|}\right)$ and $\textrm{toggle}=0$
            \ENDIF
            \STATE $\gamma_k = \frac{\nu_A}{\lambda_k^0+\lambda_k^1\norm{F(x_k)}}$
            \STATE $\hx_k=x_k-\gamma_kF(x_k)$.
        \ENDWHILE
        \vspace{2mm}
        \STATE $\alpha_k = \frac{\la F(\hx_k), x_k - \hx_k \ra}{\|F(\hx_k)\|^2}$
        \vspace{2mm}
        \STATE $x_{k+1} = x_k - \alpha_k F(\hx_k)$
        \ENDFOR
    \end{algorithmic}
\end{algorithm}

\subsubsection{Sufficient choices of $\gamma_k$ for the critical condition}

\begin{proposition} 
\label{prop:gamma-critical-condition-sufficient}
Fix $A \in (0,1]$ and let $\hx_k=x_k-\gamma_kF(x_k)$. 
Assuming $F(x_k) \ne 0$, \eqref{eqn:gamma-critical-condition} is satisfied in each of the following cases:
\begin{itemize}[leftmargin=*]
   \setlength{\itemsep}{0pt}
    \item $F$ is $L$-Lipschitz (satisfies Definition~\ref{def:Lipschitz}) with constant
    $L>0$, and $0<\gamma_k\le \frac{A}{L}$.

     \item $F$ is $(L,\holderexponent)$-H\"older (satisfies \cref{def:Holder-continuity}), and $0 < \gamma_k \le \left(\frac{A}{L}\right)^{\frac{1}{\holderexponent}} \norm{F(x_k)}^{\frac{1-\holderexponent}{\holderexponent}}$.

    \item $F$ is $(L_0,L_1)$-Lipschitz (satisfies Definition~\ref{def:L0L1-Lipschitz}), and $0 < \gamma_k \le \frac{\nu_A}{L_0+L_1\norm{F(x_k)}}$, where $\nu_A>0$ satisfies $\nu_A e^{\nu_A} \le A$.

    \item $F$ is $L$-Lipschitz, $(L,\holderexponent)$-H\"older or $(L_0, L_1)$-Lipschitz and $\gamma_k$ is returned by the line-search rule in \cref{alg:PolyakEG_linesearch_unified}.
\end{itemize}
\end{proposition}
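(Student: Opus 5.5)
The first three cases all follow from one observation: since $\hx_k - x_k = -\gamma_k F(x_k)$, we have $\norm{\hx_k - x_k} = \gamma_k\norm{F(x_k)}$. In each case we plug this into the relevant regularity bound and use the upper bound on $\gamma_k$.

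\emph{Lipschitz case.} Definition~\ref{def:Lipschitz} gives $\norm{F(\hx_k)-F(x_k)}\le L\gamma_k\norm{F(x_k)}$. Since $\gamma_k\le A/L$, this is at most $A\norm{F(x_k)}$.

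\emph{H\"older case.} Definition~\ref{def:Holder-continuity} gives $\norm{F(\hx_k)-F(x_k)}\le L\gamma_k^{\holderexponent}\norm{F(x_k)}^{\holderexponent}$. The function $t\mapsto t^\holderexponent$ is increasing, so raising the assumed bound on $\gamma_k$ to the power $\holderexponent$ gives $\gamma_k^\holderexponent\le (A/L)\norm{F(x_k)}^{1-\holderexponent}$. Substituting yields $L\gamma_k^\holderexponent\norm{F(x_k)}^\holderexponent\le A\norm{F(x_k)}$. This step uses $F(x_k)\neq 0$, so that the negative power $\norm{F(x_k)}^{(1-\holderexponent)/\holderexponent}$ is well defined.

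\emph{$(L_0,L_1)$ case.} Apply \eqref{eq:L0L1-alpha1-characterization} with $x=x_k$ and $y=\hx_k$. Set $s\eqdef (L_0+L_1\norm{F(x_k)})\gamma_k\le\nu_A$. Then $L_1\norm{x_k-\hx_k}=L_1\gamma_k\norm{F(x_k)}\le s$, because $L_0\ge 0$. Hence
\[
\norm{F(x_k)-F(\hx_k)}\le (L_0+L_1\norm{F(x_k)})\,e^{s}\,\gamma_k\norm{F(x_k)} = s e^{s}\norm{F(x_k)}\le \nu_A e^{\nu_A}\norm{F(x_k)}\le A\norm{F(x_k)}.
\]
Here we used that $t\mapsto te^t$ is increasing on $[0,\infty)$. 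This settles the third case.

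\emph{Line-search case.} Whenever the while loop in \cref{alg:PolyakEG_linesearch_unified} exits, the critical condition holds by construction, since that condition is exactly the loop's exit test. So the whole task is to show that the loop terminates after finitely many trials. Each trial multiplies $\lambda_k^0$ or $\lambda_k^1$ by a factor of at least $\beta-1>1$, because the subtracted ratio is at most $1$. The toggle alternates between the two estimates, so $\lambda_k^0$ is updated at least every other trial. Since $\lambda_k^0>0$, it grows geometrically and tends to infinity, and therefore $\gamma_k\to 0$. I then treat the three settings separately.
\begin{itemize}
\item \emph{Lipschitz setting.} Once $\lambda_k^0\ge \nu_A L/A$, we have $\gamma_k\le \nu_A/\lambda_k^0\le A/L$, so the first case applies and the loop exits.
\item \emph{H\"older setting.} The threshold from the second case is a strictly positive number depending only on $x_k$, because $F(x_k)\ne0$ (otherwise the algorithm has already returned). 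Since $\gamma_k\to0$, $\gamma_k$ eventually falls below this threshold and the loop exits.
\item \emph{$(L_0,L_1)$ setting.} If $\lambda_k^1=0$, then $\lambda_k^0$ alone grows, and eventually $\gamma_k\le \nu_A/\lambda_k^0\le \nu_A/(L_0+L_1\norm{F(x_k)})$. If $\lambda_k^1>0$, both estimates grow geometrically, and once $\lambda_k^0\ge L_0$ and $\lambda_k^1\ge L_1$ the third case gives termination. Even without this, it suffices that the denominator $\lambda_k^0+\lambda_k^1\norm{F(x_k)}$ eventually exceeds $L_0+L_1\norm{F(x_k)}$, and this follows from $\lambda_k^0\to\infty$.
\end{itemize}

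The main obstacle is this termination argument, in particular bookkeeping the toggle and the convention $\lambda^1=0$. The key observation that resolves it is that $\lambda_k^0$ alone diverges, which forces $\gamma_k\to0$ and hence below any of the positive thresholds in the first three cases. Quantitative bounds on the number of backtracking trials are not needed here and are deferred to \cref{theorem:L0L1-linesearch-deterministic}.
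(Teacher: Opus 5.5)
Your proof is correct and follows the paper's argument for the three explicit step-size rules. The paper treats the Lipschitz case as $\holderexponent=1$ of the H\"older computation and uses the same $s e^{s}$ bound with $s\le\nu_A$ in the $(L_0,L_1)$ case. For the line-search case, the paper likewise notes that the loop's exit test is exactly \eqref{eqn:gamma-critical-condition} and defers termination to \cref{theorem:L0L1-linesearch-deterministic}. There, the $(L_0,L_1)$ argument waits until both $\lambda_k^0\ge L_0$ and $\lambda_k^1\ge L_1$. Your observation that $\lambda_k^0\to\infty$ alone drives $\gamma_k$ below every positive threshold is valid, and it also covers $\lambda_{-1}^1=0$ with $L_1>0$, which the paper's argument does not.

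One small slip: the exponent $(1-\holderexponent)/\holderexponent$ is nonnegative, so this is not a negative power. The assumption $F(x_k)\neq0$ is needed only to make the admissible range of $\gamma_k$ nonempty when $\holderexponent<1$, not to make the power well defined.
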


\begin{proof}
If $F$ is $(L,\holderexponent)$-H\"older and $0 < \gamma_k \le \left(\frac{A}{L}\right)^{\frac{1}{\holderexponent}} \norm{F(x_k)}^{\frac{1-\holderexponent}{\holderexponent}}$, then 
\begin{align*}
    \norm{F(\hx_k) - F(x_k)} \le L \norm{\hx_k - x_k}^\holderexponent = L \norm{\gamma_k F(x_k)}^\holderexponent \le L \norm{\left(\frac{A}{L}\right)^{\frac{1}{\holderexponent}} \norm{F(x_k)}^{\frac{1}{\holderexponent}}}^\holderexponent = A\norm{F(x_k)} .
\end{align*}
This argument covers the Lipschitz case $\holderexponent=1$ as well.
When $F$ is $(L_0, L_1)$-Lipschitz, using  
\eqref{eq:L0L1-alpha1-characterization} with $x=\hx_k$ and $y=x_k$, we obtain
\[
\begin{aligned}
    \norm{F(\hx_k)-F(x_k)}
    &\le
    \left(
        L_0+L_1\norm{F(x_k)}
    \right)
    \exp\left(
        L_1\norm{\hx_k-x_k}
    \right)
    \norm{\hx_k-x_k} \\
    &=
    \gamma_k
    \left(
        L_0+L_1\norm{F(x_k)}
    \right)
    \exp\left(
        L_1\gamma_k\norm{F(x_k)}
    \right)
    \norm{F(x_k)} \\
    & \le
    \nu_A \exp\left(
        \frac{\nu_A L_1\norm{F(x_k)}}
        {L_0+L_1\norm{F(x_k)}}
    \right)
    \norm{F(x_k)} \\
    &\le
    \nu_A e^{\nu_A}\norm{F(x_k)} \le A\norm{F(x_k)}
\end{aligned}
\]
where the third line uses $\gamma_k \le \frac{\nu_A}{L_0 + L_1 \norm{F(x_k)}}$.
Finally, the line-search in \cref{alg:PolyakEG_linesearch_unified} accepts $\gamma_k$ only if \eqref{eqn:gamma-critical-condition} is satisfied, so there is nothing to prove. 
We later show in the proof of \cref{theorem:L0L1-linesearch-deterministic} that the line-search always terminates under any one of the regularity assumptions considered.

\end{proof}

Having proved Proposition~\ref{prop:gamma-critical-condition-sufficient}, let us provide some remarks on its final expressions and how it is connected with the convergence guarantees of \cref{theorem:deterministic-master-theorem}. 

\begin{rem}[Role of the critical condition]
\label{rem:role-critical-condition}
\Cref{prop:gamma-critical-condition-sufficient} shows that the critical condition~\eqref{eqn:gamma-critical-condition} can be enforced by several standard choices of $\gamma_k$. 
This allows \cref{theorem:deterministic-master-theorem} below to be stated directly in terms of the critical condition, rather than in terms of a specific step-size rule. 
In this sense, the theorem is agnostic to the assumptions on the problem class or how $\gamma_k$ is selected, and is capable of stating the convergence results in a general, unified manner.
\end{rem}

\begin{rem}[Role of the parameter $A$]
\label{rem:role-parameter-A}
The parameter $A$ can be viewed a tolerance parameter; with larger $A$, a broader range of $\gamma_k$ is admissible. 
The most aggressive choice $A=1$ is allowed when one only needs to bound $\norm{F(\hx_k)}$.
However, we require $A\in (0,1)$ in order to control $F(x_k)$ using $F(\hx_k)$, because
\[
    \norm{F(\hx_k)} \ge \norm{F(x_k)}-\norm{F(\hx_k)-F(x_k)}
    \ge (1-A)\norm{F(x_k)} .
\]
Contraction in the strongly monotone case also requires $A\in (0,1)$.
\end{rem}

\begin{rem}[Choice of $\nu_A$ in the $(L_0,L_1)$ case]
\label{rem:choice-nuA}
For a fixed value of $A$, the largest possible value of $\nu_A$ satisfying $\nu_A e^{\nu_A}\le A$ is $\nu_A=W(A)$, 
where $W$ denotes the Lambert $W$ function. 
One may use any smaller positive value of $\nu_A$, e.g., $\nu_A = \frac{A}{1+A}$.
There is no universally optimal choice of $A$, and the best value
depends on which bound one wants to optimize.
\end{rem}

\subsection{Convergence of \hyperref[alg:PolyakEG]{\algname{PolyakEG}} Under Critical Condition}

In this section, we present a unified analysis of \hyperref[alg:PolyakEG]{\algname{PolyakEG}} under the critical condition~\eqref{eqn:gamma-critical-condition} and discuss its consequences in distinct settings.

\subsubsection{Unified convergence theorem}
\begin{theorem}
\label{theorem:deterministic-master-theorem}
Let $F$ be monotone. Suppose that we choose $\gamma_k > 0$ so that the critical condition \eqref{eqn:gamma-critical-condition} holds for all $k\ge 0$.
Then, \hyperref[alg:PolyakEG]{\algname{PolyakEG}} satisfies: 
\vspace{-2mm}
\begin{itemize}
    \item If \eqref{eqn:gamma-critical-condition} is satisfied with $A \in (0, 1]$, then the extrapolation points $\hat{x}_k$ satisfy:
    \begin{equation}
        \min_{k=0,\dots,K} \gamma_k^2 \sqnorm{F(\hat{x}_k)} \le \frac{(1+A)^2 \sqnorm{x_0 - x_*}}{K+1}. \label{eq:deterministic-master-theorem_eq1}
    \end{equation}
    \item If \eqref{eqn:gamma-critical-condition} is satisfied with $A \in (0, 1)$, then the iterates $x_k$ satisfy:
    \begin{equation}
        \min_{k=0,\dots,K} \gamma_k^2 \sqnorm{F(x_k)} \le \frac{(1+A) \sqnorm{x_0 - x_*}}{(1-A) (K+1)}. \label{eq:deterministic-master-theorem_eq2} 
    \end{equation}
    \item If $F$ is $\mu$-strongly monotone and $A \in (0, 1)$, then the iterates $x_k$ satisfy:
    \begin{equation}
        \sqnorm{x_{k+1} - x_*} \le \prod_{j=0}^{k} \left( 1 - \frac{2(1-A) \gamma_j \mu}{(1+A)^2} \right) \sqnorm{x_0 - x_*} . \label{eq:deterministic-master-theorem_eq3}
    \end{equation}
\end{itemize}
\end{theorem}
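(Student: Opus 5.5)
The plan is to start from the one-step identity already derived in \cref{sec:motivation}. Since $\alpha_k$ is the exact minimizer of the upper bound, monotonicity gives
\[
\sqnorm{x_{k+1}-x_*}\le \sqnorm{x_k-x_*} - \frac{\la F(\hx_k),x_k-\hx_k\ra^2}{\sqnorm{F(\hx_k)}} = \sqnorm{x_k-x_*} - \alpha_k^2\sqnorm{F(\hx_k)}.
\]
By \cref{lemma:alpha-lower-bound}, $\la F(\hx_k),F(x_k)\ra>0$ whenever $F(\hx_k)\neq 0$, so $\alpha_k>0$ and the $[\cdot]_+$ can be dropped. The case $F(\hx_k)=0$ is trivial: we have found a root, so the bounds hold with that term equal to zero.

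\textbf{First claim.} Use $\alpha_k\ge \gamma_k/(1+A)$ from \cref{lemma:alpha-lower-bound}. This gives the per-step decrease
\[
\sqnorm{x_{k+1}-x_*}\le \sqnorm{x_k-x_*}-\gamma_k^2\sqnorm{F(\hx_k)}/(1+A)^2.
\]
Telescoping from $0$ to $K$ and bounding the minimum by the average yields \eqref{eq:deterministic-master-theorem_eq1}.

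\textbf{Second claim.} Use a sharper lower bound on the decrease in terms of $F(x_k)$. We have $\alpha_k^2\sqnorm{F(\hx_k)} = \gamma_k^2 \la F(\hx_k),F(x_k)\ra^2/\sqnorm{F(\hx_k)}$. The inequality in \cref{lemma:alpha-lower-bound}, $\la F(\hx_k),F(x_k)\ra\ge \tfrac12(a^2+(1-A^2)b^2)$ with $a=\norm{F(\hx_k)}$ and $b=\norm{F(x_k)}$, gives
\[
\frac{\la F(\hx_k),F(x_k)\ra^2}{a^2}\ge \frac{(a^2+(1-A^2)b^2)^2}{4a^2}.
\]
Minimizing over $a$, or using AM--GM $(a^2+c)^2\ge 4a^2c$, gives at least $(1-A^2)b^2$. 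The target constant $(1-A)/(1+A)$ is weaker than $(1-A^2)$ since $1+A\ge 1$, so this suffices. As a fallback, combine $\alpha_k\ge\gamma_k/(1+A)$ with $a\ge(1-A)b$ from \cref{rem:role-parameter-A}, which gives $(1-A)^2/(1+A)^2$. That bound does not match the stated constant, so the AM--GM route is the one to use. Telescoping then gives \eqref{eq:deterministic-master-theorem_eq2}.

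\textbf{Third claim (linear rate).} Here one must not discard the term $2\alpha_k\la F(\hx_k),\hx_k-x_*\ra$. Strong monotonicity gives $\la F(\hx_k),\hx_k-x_*\ra\ge\mu\sqnorm{\hx_k-x_*}$, so
\[
\sqnorm{x_{k+1}-x_*}\le \sqnorm{x_k-x_*}-\alpha_k^2\sqnorm{F(\hx_k)}-2\alpha_k\mu\sqnorm{\hx_k-x_*}.
\]
The main obstacle is converting the last two terms into a multiple of $\sqnorm{x_k-x_*}$.

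The idea is to write $x_k-x_*=(\hx_k-x_*)+\gamma_kF(x_k)$ and use $\sqnorm{u+v}\le(1+t)\sqnorm{u}+(1+1/t)\sqnorm{v}$. From the second claim, $\alpha_k^2\sqnorm{F(\hx_k)}\ge(1-A^2)\gamma_k^2\sqnorm{F(x_k)}$, which controls the $\gamma_kF(x_k)$ part. Together with $\alpha_k\ge\gamma_k/(1+A)$, the two negative terms should dominate $c\,\gamma_k\mu\sqnorm{x_k-x_*}$ with $c=2(1-A)/(1+A)^2$.

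Concretely, I would lower bound
\[
\alpha_k^2\sqnorm{F(\hx_k)}+2\alpha_k\mu\sqnorm{\hx_k-x_*}\ge \frac{2\gamma_k\mu}{1+A}\left(\frac{(1-A)(1+A)}{2\gamma_k\mu}\gamma_k^2\sqnorm{F(x_k)}+\sqnorm{\hx_k-x_*}\right).
\]
Then I would use $\mu\gamma_k\le 1$-type control, which should follow from the critical condition and strong monotonicity: $\mu\gamma_k b^2\le\la F(x_k)-F(\hx_k),\gamma_kF(x_k)\ra\le Ab^2$, so $\mu\gamma_k\le A$. This makes the coefficient of $\sqnorm{\gamma_kF(x_k)}$ at least $(1-A)/(2A)$-ish. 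Applying $\sqnorm{u}+s\sqnorm{v}\ge\frac{s}{1+s}\sqnorm{u+v}$ then gives a factor of order $(1-A)/(1+A)$. Tuning the constants to hit exactly $2(1-A)\gamma_j\mu/(1+A)^2$ is the delicate part. Iterating the resulting contraction gives \eqref{eq:deterministic-master-theorem_eq3}.
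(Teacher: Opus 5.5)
Your proposal is correct, but its key estimate differs from the paper's. The paper lower bounds the per-step decrease $\alpha_k\gamma_k\la F(\hx_k),F(x_k)\ra$ by bounding the two factors separately. It uses $\alpha_k\ge\gamma_k/(1+A)$ for the first, and \cref{lemma:alpha-lower-bound} with $\|F(\hx_k)\|\ge(1-A)\|F(x_k)\|$ for the second. This gives the coefficient $\frac{1-A}{1+A}$ in \eqref{eq:deterministic-master-theorem_eq2}. You instead keep the exact decrease $\gamma_k^2\la F(\hx_k),F(x_k)\ra^2/\|F(\hx_k)\|^2$ and bound it jointly by AM--GM. The paper's two worst cases occur at opposite ends of the range, namely $\|F(\hx_k)\|=(1+A)\|F(x_k)\|$ versus $(1-A)\|F(x_k)\|$. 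Your bound $(1-A^2)\gamma_k^2\|F(x_k)\|^2$ is therefore strictly stronger. It equals $\|F(x_k)\|^2$ times the squared cosine of the largest angle between $F(\hx_k)$ and $F(x_k)$ that the critical condition allows, so it cannot be improved using that condition alone. It yields \eqref{eq:deterministic-master-theorem_eq2} with $\frac{1}{1-A^2}$ in place of $\frac{1+A}{1-A}$, an improvement by a factor $(1+A)^2$.

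For the first claim, applying $\alpha_k\ge\gamma_k/(1+A)$ directly to $\alpha_k^2\|F(\hx_k)\|^2$ reaches the paper's constant more directly than the paper's detour through $\|F(\hx_k)\|\le(1+A)\|F(x_k)\|$.

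In the strongly monotone case, your splitting $x_k-x_*=(\hx_k-x_*)+\gamma_kF(x_k)$ with $\|u\|^2+s\|v\|^2\ge\frac{s}{1+s}\|u+v\|^2$ is the paper's Young step with $\omega=1/s$. The difference is that you use strong monotonicity only through the scalar bound $\mu\gamma_k\le A$. The paper instead substitutes $\gamma_k\mu\|F(x_k)\|^2\le\la F(x_k),F(x_k)-F(\hx_k)\ra$ and chooses $\omega=\frac{2A}{1-A}$.

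The tuning you call delicate closes immediately. Even with the coefficient in your display, $s=\frac{(1-A)(1+A)}{2\gamma_k\mu}\ge\frac{(1-A)(1+A)}{2A}\ge\frac{1-A}{2A}$. The condition $s\ge\frac{1-A}{2A}$ is equivalent to $\frac{s}{1+s}\ge\frac{1-A}{1+A}$. So the decrease is at least $\frac{2(1-A)\gamma_k\mu}{(1+A)^2}\|x_k-x_*\|^2$, exactly the factor in \eqref{eq:deterministic-master-theorem_eq3}.

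One slip to fix: the chain for $\mu\gamma_k\le A$ is missing a factor $\gamma_k$ on each outer side. It should read $\mu\gamma_k^2\|F(x_k)\|^2\le\la F(x_k)-F(\hx_k),\gamma_kF(x_k)\ra\le A\gamma_k\|F(x_k)\|^2$.
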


\begin{proof}
Let $F$ satisfy $\inprod{F(x) - F(y)}{x - y} \ge \mu \sqnorm{x-y}$ for any $x,y \in \reals^d$, where we take $\mu = 0$ if $F$ is merely monotone. Then we have
\begin{align*}
    \sqnorm{x_{k+1} - x_*} & = \sqnorm{x_k - \alpha_k F(\hat{x}_k) - x_*} \nonumber \\
    & = \sqnorm{x_k - x_*} - 2\alpha_k \inprod{F(\hat{x}_k)}{x_k - x_*} + \alpha_k^2 \sqnorm{F(\hat{x}_k)} \nonumber \\
    & = \sqnorm{x_k - x_*} - 2\alpha_k \inprod{F(\hat{x}_k)}{x_k - \hat{x}_k} - 2\alpha_k \inprod{F(\hat{x}_k)}{\hat{x}_k - x_*} + \alpha_k^2 \sqnorm{F(\hat{x}_k)} \nonumber \\
    & \le \sqnorm{x_k - x_*} - 2\alpha_k \inprod{F(\hat{x}_k)}{x_k - \hat{x}_k} - 2\alpha_k \mu \sqnorm{\hat{x}_k - x_*} + \alpha_k^2 \sqnorm{F(\hat{x}_k)} .
\end{align*}
Using Young's inequality (with $\omega > 0$ to be determined later), we obtain
\begin{align}
    & \sqnorm{x_{k+1} - x_*} \nonumber \\
    & \le \sqnorm{x_k - x_*} - 2\alpha_k \inprod{F(\hat{x}_k)}{x_k - \hat{x}_k} - \frac{2\alpha_k \mu}{1+\omega} \sqnorm{x_k - x_*} + \frac{2\alpha_k \mu}{\omega} \sqnorm{x_k - \hat{x}_k} + \alpha_k^2 \sqnorm{F(\hat{x}_k)} \nonumber \\
    & = \left( 1 - \frac{2\alpha_k \mu}{1+\omega} \right) \sqnorm{x_k - x_*} + \frac{2\alpha_k \mu}{\omega} \sqnorm{x_k - \hat{x}_k} - 2\alpha_k \inprod{F(\hat{x}_k)}{x_k - \hat{x}_k} + \alpha_k^2 \sqnorm{F(\hat{x}_k)} \nonumber \\
    & = \left( 1 - \frac{2\alpha_k \mu}{1+\omega} \right) \sqnorm{x_k - x_*} + \frac{2\alpha_k \gamma_k^2 \mu}{\omega} \sqnorm{F(x_k)} - \alpha_k \inprod{F(\hat{x}_k)}{x_k - \hat{x}_k} \nonumber \\
    & = \left( 1 - \frac{2\alpha_k \mu}{1+\omega} \right) \sqnorm{x_k - x_*} + \frac{2\alpha_k \gamma_k^2 \mu}{\omega} \sqnorm{F(x_k)} - \alpha_k \gamma_k \inprod{F(\hat{x}_k)}{F(x_k)} \nonumber \\
    & = \left( 1 - \frac{2\alpha_k \mu}{1+\omega} \right) \sqnorm{x_k - x_*} - \alpha_k \gamma_k \left( \inprod{F(\hat{x}_k)}{F(x_k)} - \frac{2\gamma_k \mu}{\omega} \sqnorm{F(x_k)} \right) .
    \label{eqn:LS-proof-bound}
\end{align}
Here, for the third line, we use the definition of $\alpha_k$ to replace $\alpha_k^2 \sqnorm{F(\hx_k)} = \alpha_k \inprod{F(\hx_k)}{x_k - \hx_k}$.

\vspace{.2cm}
\noindent
\textbf{$\bullet$ $F$ is monotone.} 
Substituting $\mu = 0$ into \eqref{eqn:LS-proof-bound} and using \Cref{lemma:alpha-lower-bound} we have
\begin{align}
    \sqnorm{x_{k+1} - x_*} & \le \sqnorm{x_k - x_*} - \alpha_k \gamma_k \inprod{F(\hat{x}_k)}{F(x_k)} \notag\\
    & \le \sqnorm{x_k - x_*} - \frac{\alpha_k \gamma_k}{2} \left( \sqnorm{F(\hat{x}_k)} + (1 - A^2) \sqnorm{F(x_k)} \right) \notag\\
    & \le \sqnorm{x_k - x_*} - \frac{\gamma_k^2}{2(1+A)} \left( \sqnorm{F(\hat{x}_k)} + (1 - A^2) \sqnorm{F(x_k)} \right) \label{eqn:monotone polyakeg bounded radius}.
\end{align}
Summing the above inequality for $k=0,1,\dots,K$ we obtain
\begin{align}
\label{eqn:monotone-bound-Fxk-Fhxk-combined}
    \sum_{k=0}^{K} \frac{\gamma_k^2}{2(1+A)}  \left( \sqnorm{F(\hat{x}_k)} + (1 - A^2) \sqnorm{F(x_k)} \right) \le \sqnorm{x_0 - x_*} - \sqnorm{x_{K+1} - x_*} \le \sqnorm{x_0 - x_*} .
\end{align}
Now note that we have
\begin{align}
\label{eqn:Fhxk-upper-bound-with-Fxk}
    \norm{F(\hx_k)} \le \norm{F(x_k)} + \norm{F(\hx_k) - F(x_k)} \le \norm{F(x_k)} + A\norm{F(x_k)} = (1+A) \norm{F(x_k)}
\end{align}
and
\begin{align}
\label{eqn:Fhxk-lower-bound-with-Fxk}
    \norm{F(\hx_k)} \ge \norm{F(x_k)} - \norm{F(\hx_k) - F(x_k)} \ge \norm{F(x_k)} - A\norm{F(x_k)} = (1-A) \norm{F(x_k)} .
\end{align}
Therefore, further lower bounding the left hand side of \eqref{eqn:monotone-bound-Fxk-Fhxk-combined} using \eqref{eqn:Fhxk-upper-bound-with-Fxk} we have
\begin{align*}
    \sqnorm{x_0 - x_*} & \ge \sum_{k=0}^K \frac{\gamma_k^2}{2(1+A)} \left( \sqnorm{F(\hx_k)} + \frac{1-A^2}{(1+A)^2} \sqnorm{F(\hx_k)} \right) \\
    & = \sum_{k=0}^K \frac{\gamma_k^2}{(1+A)^2} \sqnorm{F(\hx_k)} \\
    & \ge \frac{(K+1)}{(1+A)^2} \min_{k=0,\dots,K} \gamma_k^2 \sqnorm{F(\hx_k)} .
\end{align*}
This proves the first part. On the other hand, lower bounding \eqref{eqn:monotone-bound-Fxk-Fhxk-combined} in terms of $\norm{F(x_k)}$ using \eqref{eqn:Fhxk-lower-bound-with-Fxk} we have
\begin{align*}
    \sqnorm{x_0 - x_*} & \ge \sum_{k=0}^K \frac{\gamma_k^2}{2(1+A)} \left( (1-A)^2 \sqnorm{F(x_k)} + (1-A^2) \sqnorm{F(x_k)} \right) \\
    & = \sum_{k=0}^K \frac{1-A}{1+A} \gamma_k^2 \sqnorm{F(x_k)} \\
    & \ge \frac{(K+1) (1-A)}{1+A} \min_{k=0,\dots,K} \gamma_k^2 \sqnorm{F(x_k)} .
\end{align*}
This proves the second part of the monotone setting. 

\vspace{.2cm}
\noindent
\textbf{$\bullet$ $F$ is $\mu$-strongly monotone with $\mu>0$.}
Strong monotonicity of $F$ implies
\begin{align*}
    \inprod{x_k - \hat{x}_k}{F(x_k) - F(\hat{x}_k)} \ge \mu \sqnorm{x_k - \hat{x}_k} .
\end{align*}
Substituting $x_k - \hat{x}_k = \gamma_k F(x_k)$ in the above we get
\begin{align*}
    \inprod{F(x_k)}{F(x_k) - F(\hat{x}_k)} \ge \gamma_k \mu \sqnorm{F(x_k)} .
\end{align*}
We apply the above inequality to \eqref{eqn:LS-proof-bound}, which yields:
\begin{align*}
    & \sqnorm{x_{k+1} - x_*} \\
    & \le \left( 1 - \frac{2\alpha_k \mu}{1+\omega} \right) \sqnorm{x_k - x_*} - \alpha_k \gamma_k \left( \inprod{F(\hat{x}_k)}{F(x_k)} - \frac{2}{\omega} \gamma_k \mu \sqnorm{F(x_k)} \right) \\
    & \le \left( 1 - \frac{2\alpha_k \mu}{1+\omega} \right) \sqnorm{x_k - x_*} - \alpha_k \gamma_k \left( \inprod{F(\hat{x}_k)}{F(x_k)} - \frac{2}{\omega} \inprod{F(x_k)}{F(x_k) - F(\hat{x}_k)} \right) \\
    & = \left( 1 - \frac{2\alpha_k \mu}{1+\omega} \right) \sqnorm{x_k - x_*} - \alpha_k \gamma_k \left( \left(1 + \frac{2}{\omega}\right) \inprod{F(\hat{x}_k)}{F(x_k)} - \frac{2}{\omega} \sqnorm{F(x_k)} \right) .
\end{align*}
Now we use \Cref{lemma:alpha-lower-bound} to obtain
\begin{align*}
    \sqnorm{x_{k+1} - x_*} & \le \left( 1 - \frac{2\alpha_k \mu}{1+\omega} \right) \sqnorm{x_k - x_*} \\
    & - \alpha_k \gamma_k \left( \frac{1}{2} \left(1 + \frac{2}{\omega}\right) \left( \sqnorm{F(\hat{x}_k)} + (1 - A^2) \sqnorm{F(x_k)} \right) - \frac{2}{\omega} \sqnorm{F(x_k)} \right) \\
    & \le \left( 1 - \frac{2\alpha_k \mu}{1+\omega} \right) \sqnorm{x_k - x_*} \\
    & - \alpha_k \gamma_k \left( \frac{1}{2} \left(1 + \frac{2}{\omega}\right) \left( (1-A)^2 \sqnorm{F(x_k)} + (1 - A^2) \sqnorm{F(x_k)} \right) - \frac{2}{\omega} \sqnorm{F(x_k)} \right) \\
    & \le \left( 1 - \frac{2\alpha_k \mu}{1+\omega} \right) \sqnorm{x_k - x_*} - \alpha_k \gamma_k \left( 1 - A - \frac{2A}{\omega} \right) \sqnorm{F(x_k)} .
\end{align*}
where the second inequality uses \eqref{eqn:Fhxk-lower-bound-with-Fxk}.
Now taking $\omega = \frac{2A}{1-A}$ in the last inequality gives
\begin{align*}
    \sqnorm{x_{k+1} - x_*} & \le \left( 1 - \frac{2(1-A)}{1+A} \alpha_k \mu \right) \sqnorm{x_k - x_*} \le \left( 1 - \frac{2(1-A)}{(1+A)^2} \gamma_k \mu \right) \sqnorm{x_k - x_*} .
\end{align*}
where for the second line we use \cref{lemma:alpha-lower-bound}.
Unrolling the recursion gives the desired result.
\end{proof}

Below, we discuss which specific convergence rates \cref{theorem:deterministic-master-theorem} implies for different settings and choices of $\gamma_k$.

\subsubsection{Convergence results implied by the unified theorem}

\paragraph{Constant $\gamma_k$ for Lipschitz problems.}
Assuming $F$ is $L$-Lipschitz, taking $A=1$ and $\gamma_k = \frac{1}{L}$, we can rewrite \eqref{eq:deterministic-master-theorem_eq1} as $\textstyle \min_{0 \leq k \leq K} \sqnorm{F(\hx_k)} \leq \frac{4 L^2\|x_0 - x_*\|^2}{K+1}$. 
This recovers the result of \citet[Corollary~3.2]{pethickEscapingLimitCycles2022} for the unconstrained case. 
Moreover, with $A \in (0, 1)$ and $\gamma_k = \frac{A}{L}$, \eqref{eq:deterministic-master-theorem_eq2} reads as $\min_{0 \leq k \leq K} \| F(x_k)\|^2 \leq \frac{(1 + A) L^2 \|x_0 - x_*\|^2}{A^2 (1 - A) (K+1)}$.
Unlike the case $A=1$, where convergence is stated only in terms of $\hx_k$ as in \citet{pethickEscapingLimitCycles2022}, this ensures that one only needs to focus on the $x_k$ sequence, regardless of the problem being strongly monotone or merely monotone.

The linear convergence of \hyperref[alg:PolyakEG]{\algname{PolyakEG}} for strongly monotone $F$ is classical; it was shown in \citet{solodovModifiedProjectiontypeMethods1996}.
Nevertheless, we refine it into an explicit, tight, quantitative rate. 
In \eqref{eq:deterministic-master-theorem_eq3}, for $A \in (0,1)$ we can take $\gamma_k = \frac{A}{L}$ and get $\sqnorm{x_{k+1} - x_*} \le \left(1 - \frac{2A(1-A)}{(1+A)^2} \frac{\mu}{L}\right)^{k+1} \sqnorm{x_0 - x_*}$.
With $A=\frac{1}{3}$ and $\gamma_k = \frac{1}{3L}$, the linear factor becomes $1 - \nicefrac{\mu}{4L}$, matching the state-of-the-art rate for \hyperref[eq:EG]{\algname{EG}} \citep{MokhtariOzdaglarPattathil2020_unified, AzizianMitliagkasLacoste-JulienGidel2020_tight} that uses $\alpha_k = \gamma_k = \frac{1}{4L}$. 
It is perhaps interesting that \algname{PolyakEG} can achieve the same rate with $\gamma_k$ exceeding $\frac{1}{4L}$; it also demonstrates a technically novel aspect of our analysis.

\paragraph{The case of H\"older-continuous problems.}
Taking $A \in (0,1)$ and $\gamma_k = \left(\frac{A}{L}\right)^{\frac{1}{\holderexponent}} \norm{F(x_k)}^{\frac{1-\holderexponent}{\holderexponent}}$, we can rewrite \eqref{eq:deterministic-master-theorem_eq2} as $\min_{0 \le k \le K} \left(\frac{A\norm{F(x_k)}}{L} \right)^{2/\holderexponent} \le \frac{(1+A)\sqnorm{x_0 - x_*}}{(1-A) (K+1)}$. 
This yields the computation complexity of $\cO\left(\epsilon^{-2/\holderexponent}\right)$ for finding a point satisfying $\norm{F(\cdot)} \le \epsilon$. 
In the unconstrained Euclidean setting, this matches the complexity of \citet[Theorem~4.4(a)]{dangConvergencePropertiesNonEuclidean2015}.

\paragraph{\textbf{The case of $(L_0,L_1)$-Lipschitz problems.}}
Given $A \in(0,1)$ and $\nu_A e^{\nu_A}\le A$, consider $\gamma_k = \frac{\nu_A}{L_0 + L_1 \norm{F(x_k)}}$.
Then \eqref{eq:deterministic-master-theorem_eq2} yields
\begin{align}
    & \min_{0\le k\le K} \frac{\nu_A^2\|F(x_k)\|^2}{(L_0+L_1\|F(x_k)\|)^2} \le \frac{(1+A)\|x_0-x_*\|^2}{(1-A)(K+1)} = \tau_K^2 \implies 
    \min_{0\le k\le K} \norm{F(x_k)} \le \frac{L_0 \tau_K}{\nu_A - L_1 \tau_K}
    \label{eqn:l0-l1-monotone-result}
\end{align}
for sufficiently large $K$ where $\tau_K := \sqrt{\frac{1+A}{1-A}} \frac{\norm{x_0 - x_\star}}{\sqrt{K+1}} = \cO\left(\frac{1}{\sqrt{K}}\right)$, indicating an eventual sublinear convergence.
When $F$ is additionally $\mu$-strongly monotone, \eqref{eq:deterministic-master-theorem_eq3} yields
\begin{equation}
    \|x_{k+1}-x_*\|^2 \le \prod_{j=0}^k \left( 1- \frac{2(1-A)\nu_A\mu} {(1+A)^2(L_0+L_1\|F(x_j)\|)} \right) \|x_0-x_*\|^2 . \label{eqn:l0-l1-usm-result}
\end{equation}
This implies $\norm{x_k - x_*} \le \norm{x_0 - x_*}$ for all $k\ge 0$ and thus by \eqref{eq:L0L1-alpha1-characterization}
\begin{align*}
    \|F(x_k)\| = \|F(x_k)-F(x_*)\| & \leq L_0\exp(L_1\norm{x_k-x_*})\norm {x_k-x_*} \\ 
    & \leq L_0\exp(L_1\norm {x_0-x_*})\norm {x_0-x_*}
\end{align*}
and combining this with \eqref{eqn:l0-l1-usm-result} yields linear convergence with a fixed factor. 
This result and \eqref{eqn:l0-l1-monotone-result} are similar to the convergence guarantees for \algname{EG} with $\alpha_k = \gamma_k$ from \cite{vankovGeneralizedSmoothVariational2024,choudhury2025extragradient} for $(L_0, L_1)$-Lipschitz root finding problems.

\paragraph{Complexity of \algname{PolyakEG-LS}.}
To understand the computational complexity of \algname{PolyakEG-LS}, we need to bound the total number of line-search loops and the resulting step-sizes as below.

\begin{proposition}
\label{theorem:L0L1-linesearch-deterministic}
If $F$ is $L$-Lipschitz, $(L,\holderexponent)$-H\"older or $(L_0, L_1)$-Lipschitz, then at each iteration $k$, \cref{alg:PolyakEG_linesearch_unified} accepts a step-size $\gamma_k$ satisfying the critical condition \eqref{eqn:gamma-critical-condition}.
Furthermore, 
\begin{itemize}
    \item[(a)]
    When $F$ is $L$-Lipschitz or $(L_0, L_1)$-Lipschitz, then the total number of while-loop calls over all iterations is at most
    \begin{equation}
    \label{eqn:linesearch-operator-call-bound}
        N
        \eqdef
        2\max\left\{
            0,
            \left\lceil
                \frac{\log(L_0/\lambda_{-1}^0)}
                     {\log(\beta-1)}
            \right\rceil,
            \left\lceil
                \frac{\log(L_1/\lambda_{-1}^1)}
                     {\log(\beta-1)}
            \right\rceil
        \right\} 
    \end{equation}
    where we disregard the last term involving $L_1/\lambda_{-1}^1$ when $\lambda_{-1}^1 = L_1 = 0$, and we require $\lambda_{-1}^1 > 0$ when $L_1 > 0$.
    Consequently, for all accepted steps we have
    \[
        \lambda_k^0
        \le
        \bar\lambda^0
        \eqdef
        \beta^{N/2}\lambda_{-1}^0,
        \qquad
        \lambda_k^1
        \le
        \bar\lambda^1
        \eqdef
        \beta^{N/2}\lambda_{-1}^1.
    \]

    \item[(b)] Suppose $F$ is $(L,\holderexponent)$-H\"older with $\holderexponent \in (0,1)$. Then for any $\epsilon > 0$, the number of while-loop calls $N_\epsilon$ before reaching $\norm{F(x_k)} \le \epsilon$ for the first time is bounded by $\cO\left( \log \frac{1}{\epsilon} \right)$, and the step-size satisfies $\gamma_k \ge \min \left\{ \frac{1}{\beta - 1} \left(\frac{A}{L}\right)^{\frac{1}{\holderexponent}} \epsilon^{\frac{1-\holderexponent}{\holderexponent}} , \frac{\nu_A}{\lambda_{-1}^0} \right\}$.
\end{itemize}
\end{proposition}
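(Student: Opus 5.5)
The plan is to compare the current estimates $(\lambda_k^0,\lambda_k^1)$ with the true constants via Proposition~\ref{prop:gamma-critical-condition-sufficient}. The loop exits whenever $\gamma_k \le \frac{\nu_A}{L_0+L_1\norm{F(x_k)}}$. In the Lipschitz case we read $L_0=L$, $L_1=0$, and use $\nu_A\le\nu_Ae^{\nu_A}\le A$. A sufficient condition for this is $\lambda_k^0\ge L_0$ and $\lambda_k^1\ge L_1$. So the loop can only continue while $\lambda_k^0<L_0$ or $\lambda_k^1<L_1$.

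Next I record how each update moves the estimates. Write $D=\lambda_k^0+\lambda_k^1\norm{F(x_k)}$. The ratios $\lambda_k^0/D$ and $\lambda_k^1\norm{F(x_k)}/D$ lie in $[0,1]$. Hence each toggle-$0$ update multiplies $\lambda^0$ by a factor in $[\beta-1,\beta]$, and each toggle-$1$ update does the same for $\lambda^1$. Since $\beta>2$, the factor is at least $\beta-1>1$.

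The estimates are never reset across iterations, and toggle alternates globally. So over the whole run, any two consecutive while-loop calls contain at least one update of each estimate. It follows that once $2\lceil\log(L_0/\lambda_{-1}^0)/\log(\beta-1)\rceil$ calls have occurred, $\lambda^0\ge L_0$ holds forever, and similarly for $\lambda^1$ once $\lambda_{-1}^1>0$. After $N$ total calls both estimates dominate the true constants. From then on no loop is entered, which gives the bound~\eqref{eqn:linesearch-operator-call-bound}. The same argument shows termination within each iteration, and the critical condition holds at acceptance by the loop test.

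For the bounds $\bar\lambda^j$, each estimate is multiplied at most $N/2$ times, by a factor of at most $\beta$. One subtlety I will handle: the toggle may start at $1$ in some iteration. I will use the global alternation count instead of a per-iteration count. If $\lambda_{-1}^1=0$, then $\lambda^1$ stays $0$ and toggle-$1$ calls are wasted. This is consistent with the factor $2$ in $N$.

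For (b), the Hölder case, there is no fixed $L_0$, so I replace it by the local requirement from Proposition~\ref{prop:gamma-critical-condition-sufficient}. Take $\lambda^1=0$ and $\gamma=\nu_A/\lambda^0$. The loop stops once $\nu_A/\lambda^0\le (A/L)^{1/\holderexponent}\norm{F(x_k)}^{(1-\holderexponent)/\holderexponent}$. Whenever $\norm{F(x_k)}>\epsilon$, the right side exceeds $c_\epsilon:=(A/L)^{1/\holderexponent}\epsilon^{(1-\holderexponent)/\holderexponent}$, because $\holderexponent<1$ makes the exponent positive. So acceptance is guaranteed once $\lambda^0\ge \nu_A/c_\epsilon$.

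Hence before the first $\epsilon$-accurate iterate, the number of increments is $\cO(\log(\nu_A/(c_\epsilon\lambda_{-1}^0))/\log(\beta-1))=\cO(\log\frac1\epsilon)$. For the step-size lower bound, either no increase ever happened, giving $\gamma_k=\nu_A/\lambda_{-1}^0$, or the last increase came from a value that failed. A failing value has $\nu_A/\lambda^0>c_\epsilon$, and one increase multiplies $\lambda^0$ by at most $\beta-1$ here, since the ratio $\lambda^0/D$ equals $1$. Therefore $\gamma_k\ge c_\epsilon/(\beta-1)$.

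The main obstacle is the bookkeeping in (a). The toggle interleaving across iterations, together with degenerate cases ($\lambda^1=0$, or $L_1=0$ with $\lambda_{-1}^1>0$), must be handled so that the clean factor $2$ and the $\beta^{N/2}$ upper bound are justified. I would argue these via a global counter of updates to each coordinate rather than per iteration.
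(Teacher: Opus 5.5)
Your proposal is correct and follows essentially the same route as the paper. In (a), you show that the critical condition holds once $\lambda^0\ge L_0$ and $\lambda^1\ge L_1$ (by Proposition~\ref{prop:gamma-critical-condition-sufficient}), and that each estimate is multiplied by a factor in $[\beta-1,\beta]$ on alternate calls. In (b), you compare $\lambda^0$ with the threshold $\nu_A (L/A)^{1/\holderexponent}\epsilon^{-(1-\holderexponent)/\holderexponent}$ and note that the last increase overshoots it by at most a factor $\beta-1$. Your explicit tracking of the global toggle, including the no-op toggle-$1$ calls when $\lambda^1_{-1}=0$, is slightly more careful than the paper, which simply says those calls can be ignored.
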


\noindent
The above result, combined with \cref{theorem:deterministic-master-theorem}, shows that the complexity of line-search is dominated by the number of iterations needed to attain $\epsilon$-residual.
This is clear for the case of $L$-Lipschitz or $(L_0, L_1)$-Lipschitz problems because the number of total line-search loops is finite.
In the $(L,\holderexponent)$-H\"older case, if $\norm{F(x_k)} > \epsilon$ for all $k=0,\dots,K$, then combining \cref{theorem:L0L1-linesearch-deterministic}(b) with \cref{theorem:deterministic-master-theorem} we obtain
\[
    \frac{(1+A)\sqnorm{x_0 - x_*}}{(1-A)(K+1)} \ge \min_{0\le k \le K} \gamma_k^2 \sqnorm{F(x_k)} \ge \min \left\{ \left( \frac{\nu_A \left( \frac{A\epsilon}{L} \right)^\frac{1}{\holderexponent}}{\beta - 1}\right)^2 , \left( \frac{\nu_A \epsilon}{\lambda_{-1}^0} \right)^2 \right\} ,
\]
which yields an $\cO\left(\epsilon^{-2/\alpha} \right)$ upper bound on $K$. 
On the other hand, the number of line-search loops up to iteration $K$ is $\cO\left(\log \epsilon^{-1}\right)$, so the order of total complexity for finding a point with $\norm{F(\cdot)} \le \epsilon$ remains $\cO\left(\epsilon^{-2/\alpha} \right)$.
Therefore, the theoretical complexity of \algname{PolyakEG-LS} is comparable to the versions that select $\gamma_k$ based on the knowledge of problem-dependent parameters such as $L,\holderexponent,L_0$ and $L_1$, despite the algorithm being parameter-free.

\begin{proof}[Proof of \cref{theorem:L0L1-linesearch-deterministic}]
(a) If $F$ is $(L_0, L_1)$-Lipschitz, then once we reach $\lambda_k^0 \ge L_0$ and $\lambda_k^1 \ge L_1$, we have \eqref{eqn:gamma-critical-condition} satisfied for any $x_k$ by \eqref{eq:L0L1-alpha1-characterization}, and no further line-search loops are called.
At each while loop call, we alternatingly increase either $\lambda_k^0$ or $\lambda_k^1$ by the factor
\begin{align*}
    \beta - \frac{\lambda_k^0}{\lambda_k^0 + \lambda_k^1 \norm{F(x_k)}} \ge \beta - 1 > 1 \quad \text{or} \quad \beta - \frac{\lambda_k^1 \norm{F(x_k)}}{\lambda_k^0 + \lambda_k^1 \norm{F(x_k)}} \ge \beta - 1 > 1 ,
\end{align*}
respectively. 
The number of total while loop calls throughout the algorithm's execution is therefore upper bounded by \eqref{eqn:linesearch-operator-call-bound}.
Both $\lambda_k^0$ or $\lambda_k^1$ is increased at most $\frac{N}{2}$ times, by a factor at most $\beta$, so we have $\lambda_k^i \le \beta^{N/2} \lambda_{-1}^i$ for $i=0,1$.\medskip

\noindent
(b) It remains to show that the while loop run always terminates for $(L,\holderexponent)$-H\"older problems with $\holderexponent \in (0,1)$, and the corresponding bound on the number of while-loop calls in this case.
If $F(x_k) = 0$ then $x_k$ is already a solution, and the algorithm terminates.
Otherwise, by \cref{prop:gamma-critical-condition-sufficient} we know that \eqref{eqn:gamma-critical-condition} is satisfied if $\lambda_k^0 \ge \nu_A \left(\frac{L}{A}\right)^{\frac{1}{\holderexponent}} \norm{F(x_k)}^{-\frac{1-\holderexponent}{\holderexponent}}$, so the line-search at iteration $k$ terminates  in finite steps.
Additionally, because each rejected line-search loop increases $\lambda_k^0$ by factor $\beta-1$, unless  the initial $\lambda_k^0$ already satisfies the stopping criterion, 
the accepted $\lambda_k^0$ is at most $(\beta-1) \nu_A \left(\frac{L}{A}\right)^{\frac{1}{\holderexponent}} \norm{F(x_k)}^{-\frac{1-\holderexponent}{\holderexponent}}$. 
Now suppose that $\epsilon$-accuracy is not reached until the $k$-th iteration, i.e., $\norm{F(x_j)} > \epsilon$ for $j=0,\dots,k$.
This implies $\lambda_k^0 \le \max \left\{ \lambda_{-1}^0 , (\beta-1) \nu_A \left(\frac{L}{A}\right)^{\frac{1}{\holderexponent}} \epsilon^{-\frac{1-\holderexponent}{\holderexponent}} \right\}$, and thus,
\[
    \gamma_k = \frac{\nu_A}{\lambda_k^0} \ge \min \left\{ \frac{1}{\beta - 1} \left(\frac{A}{L}\right)^{\frac{1}{\holderexponent}} \epsilon^{\frac{1-\holderexponent}{\holderexponent}} , \frac{\nu_A}{\lambda_{-1}^0} \right\}
\]
and the number of line-search rejections up to iteration $k$ is at most
\[
    N_\epsilon = \log_{\beta-1} \left( \max\left\{ 1, \frac{(\beta-1) \nu_A \left(\frac{L}{A}\right)^{\frac{1}{\holderexponent}} \epsilon^{-\frac{1-\holderexponent}{\holderexponent}}}{\lambda_{-1}^0} \right\} \right) = \left[ C + \frac{(1-\holderexponent)}{\holderexponent \log (\beta - 1)} \log \frac{1}{\epsilon} \right]_+ = \cO \left( \log  \frac{1}{\epsilon} \right) 
\]
where $C$ is a constant depending only on $L, \holderexponent, A, \nu_A, \beta$ and $\lambda_{-1}^0$.

\end{proof}

\section{Stochastic Setting}\label{sec:stochastic}
Stochastic Polyak step-size methods have recently received significant attention in stochastic convex minimization. In particular, \citet{loizouStochasticPolyakStepsize2021} showed that stochastic gradient methods with Polyak-type step-sizes can achieve strong convergence guarantees without requiring knowledge of problem-dependent constants, provided that the problem satisfies the interpolation condition, meaning that all component functions share a common minimizer. 
We consider the analogous interpolation condition for the stochastic root-finding setting:
there exists $x_* \in \R^d$ such that $F_i(x_*)=0$ for every $i\in[n]$. 
When this is the case, we refer to the problem as \emph{interpolated}.
The interpolation assumption is natural in overparameterized learning but restrictive in general stochastic optimization settings. Subsequently, \citet{orvietoDynamicsSGDStochastic2022} studied decreasing variants of the stochastic Polyak step-size that remove the interpolation requirement and handle a broader class of convex objectives, albeit under the assumption that the algorithm iterates remain bounded.

Motivated by these developments, we provide analogous guarantees for stochastic extensions of \algname{PolyakEG}. 
We first consider the direct stochastic analogue of \algname{PolyakEG}, named \algname{PolyakSEG} (\Cref{alg:PolyakSEG}), which replaces $F(x_k)$ and $F(\hx_k)$ by stochastic estimates $F_{\mathcal{S}_k}(x_k)$ and $F_{\mathcal{S}_k}(\hx_k)$. 
As in the stochastic minimization results of \citet{loizouStochasticPolyakStepsize2021}, this naive extension naturally leads to convergence guarantees under the interpolation condition. 
To go beyond this restrictive regime, we then introduce and analyze \textit{Decreasing Polyak} \algname{SEG} (\algname{DecPolyakSEG}; \Cref{alg:decPolyakSEG}), which adapts the decreasing-Polyak philosophy of \citet{orvietoDynamicsSGDStochastic2022} to stochastic extragradient methods for monotone root-finding problems.

\paragraph{\textbf{Connection to Minimization.}}

For stochastic convex minimization setting: $\underset{x\in \reals^d}{\text{minimize}}\,\, f(x) = \frac{1}{n} \sum_{i = 1}^n f_i(x)$, \citet{loizouStochasticPolyakStepsize2021} and \citet{orvietoDynamicsSGDStochastic2022} respectively studied stochastic gradient descent (\algname{SGD}) $x_{k+1} - x_k - \eta_k \nabla f_{\mathcal{S}_k}(x_k)$ with the Stochastic Polyak step-size (\algname{SPS}) $\eta_k = \frac{f_{\mathcal{S}_k}(x_k) - \min_x f_{\mathcal{S}_k}(x)}{\| \nabla f_{\mathcal{S}_k}(x_k)\|^2}$ \big(here $f_{\mathcal{S}_k}(x) = \frac{1}{B} \sum_{i \in \mathcal{S}_k} f_i(x)$\big), and its decreasing variant \algname{DecSPS} $$\eta_k = \frac{1}{c_{k+1}} \min \left\{ \frac{f_{\mathcal{S}_k}(x_k) - \min_x f_{\mathcal{S}_k}(x)}{c \| \nabla f_{\mathcal{S}_k}(x_k)\|^2}, c_k \eta_{k-1}\right\}$$ 
where $c_k > 0$ is a nondecreasing sequence.
Specifically, \citet{loizouStochasticPolyakStepsize2021} showed that \algname{SPS} converges linearly given that each $f_i$ is smooth and strongly convex and $x_*$ is an interpolating solution.
Without interpolation, however, only the convergence to a neighborhood could be guaranteed.
\citet{orvietoDynamicsSGDStochastic2022} introduced \algname{DecSPS} to resolve this issue and proved its (sublinear) convergence without assuming an interpolated solution.
Our \algname{PolyakSEG} and \algname{DecPolyakSEG}, introduced below, can be respectively viewed as analogues of \algname{SPS} and \algname{DecSPS} (see \cref{tab:polyak_step_stochastic}).

\begin{algorithm}[H]
\small
    \caption{\algname{\textcolor{PineGreen}{PolyakSEG}}}
    \label{alg:PolyakSEG}
    \begin{algorithmic}[1]
        \REQUIRE Initial point $x_0 \in \R^d$ and a rule for choosing $\{\gamma_k\}_{k=0}^\infty$.
        \FOR{$k = 0, 1,...,K$}
        \STATE Sample $\mathcal{S}_k \subseteq [n]$.
        \STATE $\hx_k = x_k - \gamma_k F_{\mathcal{S}_k}(x_k)$.
        \STATE \colorbox{green!20}{$\alpha_k = \frac{\la F_{\mathcal{S}_k}(\hx_k), x_k - \hx_k \ra}{\|F_{\mathcal{S}_k}(\hx_k)\|^2}$.}
        \vspace{2mm}
        \STATE $x_{k+1} = x_k - \alpha_k F_{\mathcal{S}_k}(\hx_k)$.
        \ENDFOR
    \end{algorithmic}
\end{algorithm}

\begin{algorithm}[H]
\small
    \caption{\algname{\textcolor{PineGreen}{DecPolyakSEG}}}
    \label{alg:decPolyakSEG}
    \begin{algorithmic}[1]
        \REQUIRE Initial point $x_0 \in \R^d$, a non-decreasing positive sequence $\{ c_k\}_{k=-1}^{\infty}$, and a rule for choosing $\{\gamma_k\}_{k=-1}^\infty$ such that $0 < \gamma_k \le \frac{c_{k-1}}{c_k}\gamma_{k-1}$ for $k\ge 0$.
        \STATE $\alpha_{-1} = \infty$.
        \FOR{$k = 0, 1,...,K$}
        \STATE Sample $\mathcal{S}_k \subseteq [n]$.
        \STATE $\hx_k = x_k - \gamma_k F_{\mathcal{S}_k}(x_k)$.
        \STATE \colorbox{green!20}{$\alpha_k = \min \left\{ \frac{\la F_{\mathcal{S}_k}(\hx_k), x_k - \hx_k \ra}{\|F_{\mathcal{S}_k}(\hx_k)\|^2}, \alpha_{k-1}\right\}$.}
        \vspace{2mm}
        \STATE $x_{k+1} = x_k - \alpha_k F_{\mathcal{S}_k}(\hx_k)$.
        \ENDFOR
    \end{algorithmic}
\end{algorithm}

\paragraph{Avoiding division by zero.}
For deterministic \algname{PolyakEG}, $F(\hx_k) = 0$ implies that $\hx_k$ is a solution, and we can terminate the algorithm before computing $\alpha_k$.
In the stochastic setting, however, $F_{\cS_k}(\hx_k) = 0$ does not necessarily imply that $\hx_k$ is a solution and we cannot simply terminate.
To avoid division by zero in this case, we may set $\alpha_k = \min\{\alpha_{k-1}, \gamma_k\}$ in both Algorithms~\ref{alg:PolyakSEG} and \ref{alg:decPolyakSEG}.
In either case, $x_{k+1} = x_k$ since $F_{\cS_k}(\hx_k) = 0$, so the $k$-th iteration produces no update.
All subsequent convergence arguments remain unchanged under this convention.

\paragraph{Stochastic critical condition.} 
We introduce the following critical condition, whose role is similar as in the deterministic analysis, but with the difference that we impose the condition on the sample operator $F_{\cS_k}$.

\begin{definition} 
\label{def:critical-condition-stochastic}
We say $\gamma_k$ satisfies the critical condition at iteration $k$ with respect to $F_{\mathcal{S}_k}$ for some $A \in (0, 1]$ if
\begin{equation}\label{eqn:gamma-critical-condition-stochastic}
\textstyle
    \norm{F_{\mathcal{S}_k}(\hx_k) - F_{\mathcal{S}_k}(x_k)} \le A \norm{F_{\mathcal{S}_k}(x_k)}
\end{equation}
for $\hx_k = x_k - \gamma_k F_{\mathcal{S}_k}(x_k)$.
\end{definition}

As before, we can enforce this condition by choosing $\gamma_k$ according to theoretical assumptions.
In particular, we will assume the uniform sample-wise $L$-Lipschitzness throughout this section for simplicity of the analysis, and in this case, \eqref{eqn:gamma-critical-condition-stochastic} will be satisfied with any $0 < \gamma_k \le \frac{A}{L}$.
However, this does not imply that stochastic \algname{PolyakEG} methods are intrinsically limited to uniformly Lipschitz problems.
We believe broader settings, such as non-uniformly Lipschitz or $(L_0, L_1)$-Lipschitz problems, can be handled, e.g., by incorporating more flexible line-search schemes, which we do not formally pursue in this work.

Now we state some consequences of the stochastic critical condition~\eqref{eqn:gamma-critical-condition-stochastic} which would be useful for all the subsequent convergence analyses. 

\begin{lemma}
\label{lemma:PolyakSEG-LS-key-properties}
Let $x_k$ be an iterate from either \hyperref[alg:PolyakSEG]{\algname{PolyakSEG}} or \hyperref[alg:decPolyakSEG]{\algname{DecPolyakSEG}}, and suppose that $\gamma_k$ satisfies the critical condition (\Cref{def:critical-condition-stochastic}). 
Then, for $k=0,1,\dots$,
\begin{enumerate}
[label=(\alph*)]
    \item $(1-A) \norm{F_{\mathcal{S}_k}(x_k)} \le \norm{F_{\mathcal{S}_k}(\hx_k)} \le (1+A)\norm{F_{\mathcal{S}_k}(x_k)}$
    \item $\inprod{F_{\mathcal{S}_k}(\hx_k)}{F_{\mathcal{S}_k}(x_k)}
    \ge \max \left\{ (1-A) \sqnorm{F_{\mathcal{S}_k}(x_k)} , \frac{\sqnorm{F_{\mathcal{S}_k}(\hx_k)}}{1+A} \right\}$ if $0 < A < 1$
    \item $\frac{\gamma_k}{1+A} \le \alpha_k \le \frac{\gamma_k}{1-A}$ if $0 < A < 1$
\end{enumerate}
holds almost surely.
\end{lemma}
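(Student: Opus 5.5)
Throughout, abbreviate $a \eqdef F_{\mathcal{S}_k}(x_k)$ and $b \eqdef F_{\mathcal{S}_k}(\hx_k)$. The critical condition \eqref{eqn:gamma-critical-condition-stochastic} then reads $\norm{b-a}\le A\norm{a}$. Every claim is a pathwise statement about this single sample, so it holds almost surely once it is proved for fixed realizations. I would also prove the inequality in \cref{lemma:alpha-lower-bound} here, since that proof was deferred to this lemma.

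\emph{Part (a).} This is the triangle inequality. We have $\norm{b}\le\norm{a}+\norm{b-a}\le(1+A)\norm{a}$ and $\norm{b}\ge\norm{a}-\norm{b-a}\ge(1-A)\norm{a}$, exactly as in \eqref{eqn:Fhxk-upper-bound-with-Fxk}--\eqref{eqn:Fhxk-lower-bound-with-Fxk}.

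\emph{Part (b).} Square the critical condition. Expanding $\sqnorm{b-a}\le A^2\sqnorm{a}$ gives
\[
2\inprod{a}{b}\ge \sqnorm{b}+(1-A^2)\sqnorm{a}.
\]
This is the inequality of \cref{lemma:alpha-lower-bound}, and it holds for any $A\in(0,1]$. I then derive two lower bounds from it.
\begin{itemize}
\item For the first, drop nothing and use $\sqnorm{b}\ge(1-A)^2\sqnorm{a}$ from (a). This gives $2\inprod{a}{b}\ge[(1-A)^2+(1-A)(1+A)]\sqnorm{a}=2(1-A)\sqnorm{a}$.
\item For the second, use $\sqnorm{a}\ge\sqnorm{b}/(1+A)^2$ from (a). This gives $2\inprod{a}{b}\ge[1+\tfrac{1-A}{1+A}]\sqnorm{b}=\tfrac{2}{1+A}\sqnorm{b}$.
\end{itemize}
Together these yield the stated maximum.

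\emph{Part (c).} For \algname{PolyakSEG}, $x_k-\hx_k=\gamma_k a$, so $\alpha_k=\gamma_k\inprod{a}{b}/\sqnorm{b}$. The second bound in (b) gives $\alpha_k\ge\gamma_k/(1+A)$. For the upper bound, apply Cauchy--Schwarz, $\inprod{a}{b}\le\norm{a}\norm{b}$, together with $\norm{a}\le\norm{b}/(1-A)$ from (a). This gives $\alpha_k\le\gamma_k/(1-A)$. Here $A<1$ guarantees $b\neq0$ whenever $a\neq0$. In the degenerate cases $a=0$ or $b=0$, the division-by-zero convention sets $\alpha_k\le\gamma_k$. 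These cases need a short separate remark, and the lower bound may have to be read under that convention.

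\emph{The main obstacle} is \algname{DecPolyakSEG}, where $\alpha_k=\min\{\gamma_k\inprod{a}{b}/\sqnorm{b},\alpha_{k-1}\}$. The upper bound is immediate, since the minimum is at most the Polyak ratio. For the lower bound I would argue by induction on $k$ that $\alpha_{k-1}\ge\gamma_{k-1}/(1+A)$, starting from $\alpha_{-1}=\infty$. The schedule constraint $\gamma_k\le\frac{c_{k-1}}{c_k}\gamma_{k-1}\le\gamma_{k-1}$, which holds because $c_k$ is nondecreasing, then gives $\alpha_{k-1}\ge\gamma_k/(1+A)$. Combining this with the Polyak-ratio bound shows $\alpha_k\ge\gamma_k/(1+A)$.

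The induction requires the critical condition, and hence the ratio bound, at every earlier iteration, not only at iteration $k$. I would therefore read the hypothesis as holding for all iterations up to $k$, as in the uniform sample-wise $L$-Lipschitz setting with $\gamma_k\le A/L$.
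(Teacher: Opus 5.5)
Your proof is correct and follows the paper's argument essentially step for step: the triangle inequality for (a); the expansion of the squared critical condition combined with each half of (a) for (b); and for (c) the Polyak-ratio bound, plus, for the decreasing variant, an induction from $\alpha_{-1}=\infty$ using $\gamma_k\le\gamma_{k-1}$. The only deviations are that you obtain the upper bound in (c) directly from Cauchy--Schwarz and (a) instead of the paper's Young-inequality route, and that you add two sensible refinements the paper leaves implicit: the degenerate case $F_{\mathcal{S}_k}(x_k)=0$, and the fact that the induction needs the critical condition at all earlier iterations.
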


\begin{proof}

By the critical condition \eqref{eqn:gamma-critical-condition-stochastic}, we obtain
\begin{align*}
    \norm{F_{\mathcal{S}_k}(\hx_k)} \le \norm{F_{\mathcal{S}_k}(x_k)} + \norm{F_{\mathcal{S}_k}(\hx_k) - F_{\mathcal{S}_k}(x_k)} \le \norm{F_{\mathcal{S}_k}(x_k)} + A\norm{F_{\mathcal{S}_k}(x_k)} = (1+A) \norm{F_{\mathcal{S}_k}(x_k)}
\end{align*}
and
\begin{align*}
    \norm{F_{\mathcal{S}_k}(\hx_k)} \ge \norm{F_{\mathcal{S}_k}(x_k)} - \norm{F_{\mathcal{S}_k}(\hx_k) - F_{\mathcal{S}_k}(x_k)} \ge \norm{F_{\mathcal{S}_k}(x_k)} - A\norm{F_{\mathcal{S}_k}(x_k)} \ge (1-A) \norm{F_{\mathcal{S}_k}(x_k)}
\end{align*}
which proves (a).
Next, observe that
\begin{align*}
    \inprod{F_{\mathcal{S}_k}(\hx_k)}{F_{\mathcal{S}_k}(x_k)} & = \frac{1}{2} \left( \sqnorm{F_{\mathcal{S}_k}(\hx_k)} - \sqnorm{F_{\mathcal{S}_k}(\hx_k) - F_{\mathcal{S}_k}(x_k)} + \sqnorm{F_{\mathcal{S}_k}(x_k)} \right) \\
    & \ge \frac{1}{2} \left( \sqnorm{F_{\mathcal{S}_k}(\hx_k)} - A^2 \sqnorm{F_{\mathcal{S}_k}(x_k)} +\sqnorm{F_{\mathcal{S}_k}(x_k)} \right) \\
    & = \frac{1}{2} \left( \sqnorm{F_{\mathcal{S}_k}(\hx_k)} + (1 - A^2) \sqnorm{F_{\mathcal{S}_k}(x_k)} \right) 
\end{align*}
where the inequality uses \eqref{eqn:gamma-critical-condition-stochastic}.
Applying the first inequality of (a) to the last expression, we obtain
\begin{align*}
    \inprod{F_{\mathcal{S}_k}(\hx_k)}{F_{\mathcal{S}_k}(x_k)} \ge \frac{1}{2} \left( (1-A)^2 \sqnorm{F_{\mathcal{S}_k}(x_k)} + (1 - A^2) \sqnorm{F_{\mathcal{S}_k}(x_k)} \right) = (1-A) \sqnorm{F_{\mathcal{S}_k}(x_k)} ,
\end{align*}
while applying the second inequality of (a) gives
\begin{align*}
    \inprod{F_{\mathcal{S}_k}(\hx_k)}{F_{\mathcal{S}_k}(x_k)} \ge \frac{1}{2} \left( \sqnorm{F_{\mathcal{S}_k}(\hx_k)} + (1 - A^2) \frac{1}{(1+A)^2} \sqnorm{F_{\mathcal{S}_k}(\hx_k)} \right) = \frac{\sqnorm{F_{\mathcal{S}_k}(\hx_k)}}{1+A} .
\end{align*}
This proves (b).
Finally, for \algname{PolyakSEG} we immediately obtain
\begin{align*}
    \alpha_k = \frac{\inprod{F_{\mathcal{S}_k}(\hx_k)}{x_k - \hx_k}}{\sqnorm{F_{\mathcal{S}_k}(\hx_k)}} = \frac{\gamma_k \inprod{F_{\mathcal{S}_k}(\hx_k)}{F_{\mathcal{S}_k}(x_k)}}{\sqnorm{F_{\mathcal{S}_k}(\hx_k)}} \ge \frac{\gamma_k}{1+A} .
\end{align*}
For the case of \algname{DecPolyakSEG}, we have $\gamma_k \le \gamma_{k-1}$ and $\alpha_k \le \alpha_{k-1}$ for all $k=0,1,\dots$ by construction.
As $\alpha_{-1} = \infty$, 
\begin{align*}
    \alpha_0 = \min \left\{ \frac{\inprod{F_{\cS_0}(\hx_0)}{x_0 - \hx_0}}{\sqnorm{F_{\cS_0}(\hx_0)}} , \alpha_{-1} \right\} = \frac{\inprod{F_{\cS_0}(\hx_0)}{x_0 - \hx_0}}{\sqnorm{F_{\cS_0}(\hx_0)}} \ge \frac{\gamma_0}{1+A} .
\end{align*}
Now we use induction on $k=1,2,\dots$: assuming that $\alpha_{k-1} \ge \frac{\gamma_{k-1}}{1+A}$, we have $\alpha_{k-1} \ge \frac{\gamma_{k}}{1+A}$ (since $\gamma_{k} \le \gamma_{k-1}$). 
We further have $\frac{\inprod{F_{\mathcal{S}_k}(\hx_k)}{x_k - \hx_k}}{\sqnorm{F_{\mathcal{S}_k}(\hx_k)}} \ge \frac{\gamma_k}{1+A}$ by (b), which implies
\begin{align*}
    \alpha_k = \min \left\{ \alpha_{k-1}, \frac{\inprod{F_{\mathcal{S}_k}(\hx_k)}{x_k - \hx_k}}{\sqnorm{F_{\mathcal{S}_k}(\hx_k)}} \right\} \ge \frac{\gamma_k}{1+A} ,
\end{align*}
completing the induction. This proves the first inequality in (c).
Finally, for the second inequality in (c), observe that for both \algname{PolyakSEG} and \algname{DecPolyakSEG},
\begin{align*}
    \alpha_k & \le \frac{\gamma_k \inprod{F_{\mathcal{S}_k}(\hx_k)}{F_{\cS_k}(x_k)}}{\sqnorm{F_{\mathcal{S}_k}(\hx_k)}} \le \frac{\gamma_k}{2} \frac{\frac{1}{1-A} \sqnorm{F_{\mathcal{S}_k}(\hx_k)} + (1-A) \sqnorm{F_{\cS_k}(x_k)}}{\sqnorm{F_{\mathcal{S}_k}(\hx_k)}} \\
    & \le \frac{\gamma_k}{2} \frac{\frac{1}{1-A} \sqnorm{F_{\mathcal{S}_k}(\hx_k)} + \frac{1}{1-A} \sqnorm{F_{\mathcal{S}_k}(\hx_k)}}{\sqnorm{F_{\mathcal{S}_k}(\hx_k)}} = \frac{\gamma_k}{1-A}
\end{align*}
where the third inequality uses (a).
\end{proof}

\subsection{\algname{PolyakSEG}: Convergence for Interpolated Problems}\label{subsec:PolyakSEG}

In this section we consider \algname{PolyakSEG} (\Cref{alg:PolyakSEG}), which is an immediate extension of \algname{PolyakEG} where $F$ is replaced by $F_{\cS_k}$, where $\cS_k$ is a mini-batch sampled at each iteration. 
For interpolated problems, the essentially same descent argument for deterministic \algname{PolyakEG} can be applied samplewisely, yielding the following result.

\begin{theorem}\label{theorem:PolyakSEG-convergence1}
Suppose that each $F_i \colon \R^d\to\R^d$ is monotone and there exists an interpolating solution $x_*$ satisfying $F_i(x_*)=0$ almost surely.
Suppose that we choose $\gamma_k > 0$ so that \eqref{eqn:gamma-critical-condition-stochastic} holds with $A \in (0,1)$.
Then \algname{PolyakSEG} satisfies:
\begin{itemize}
    \item Almost surely, 
    \begin{equation}
    \label{eq:PolyakSEG-sampled-residual-rate}
        \frac{1}{K+1} \sum_{k=0}^K \gamma_k^2 \sqnorm{F_{\cS_k}(x_k)}
        \leq
        \frac{(1+A)\|x_0-x_*\|^2}
        {(1-A)(K+1)}.
    \end{equation}

    \item If, in addition, there exist constants $\mu_\cS \geq 0$ such that $\inprod{F_\cS(x) - F_\cS(y)}{x-y} \geq \mu_\cS\|x-y\|^2$ for all $x,y\in\R^d$ and mini-batches $\cS$, then
    \begin{equation}
    \label{eq:PolyakSEG-heterogeneous-expectation}
        \Expk{\|x_{k+1}-x_*\|^2}
        \leq
        \left(
            1-
            \frac{2(1-A)}{(1+A)^2}
            \Expk{\gamma_k\mu_{\mathcal S_k}}
        \right)
        \|x_k-x_*\|^2
    \end{equation}
    where $\Expk{\cdot}$ denotes the conditional expectation with respect to the randomness revealed before drawing $\cS_k$.

\end{itemize}
\end{theorem}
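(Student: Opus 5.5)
The plan is to repeat the proof of \cref{theorem:deterministic-master-theorem} sample by sample, with $F$ replaced by $F_{\cS_k}$. Interpolation is what makes this possible. Since $F_i(x_*)=0$ for all $i$, we have $F_{\cS_k}(x_*)=0$, and $F_{\cS_k}$ is monotone as an average of monotone operators. So for each realization of $\cS_k$, $x_*$ is a zero of the monotone operator $F_{\cS_k}$, and the one-step identity holds pathwise.

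First I will expand $\sqnorm{x_{k+1}-x_*}$ exactly as in that proof. I will use $\inprod{F_{\cS_k}(\hx_k)-F_{\cS_k}(x_*)}{\hx_k-x_*}\ge \mu_{\cS_k}\sqnorm{\hx_k-x_*}$, taking $\mu_{\cS_k}=0$ in the merely monotone case. Then I will use the Polyak identity $\alpha_k^2\sqnorm{F_{\cS_k}(\hx_k)}=\alpha_k\inprod{F_{\cS_k}(\hx_k)}{x_k-\hx_k}$ and Young's inequality with parameter $\omega$. This gives the analogue of \eqref{eqn:LS-proof-bound}:
\[
\sqnorm{x_{k+1}-x_*}\le\Bigl(1-\tfrac{2\alpha_k\mu_{\cS_k}}{1+\omega}\Bigr)\sqnorm{x_k-x_*}-\alpha_k\gamma_k\Bigl(\inprod{F_{\cS_k}(\hx_k)}{F_{\cS_k}(x_k)}-\tfrac{2\gamma_k\mu_{\cS_k}}{\omega}\sqnorm{F_{\cS_k}(x_k)}\Bigr).
\]
This bound holds almost surely.

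\textbf{Monotone part.} Set $\mu_{\cS_k}=0$. I will apply \cref{lemma:PolyakSEG-LS-key-properties}(b), which gives $\inprod{F_{\cS_k}(\hx_k)}{F_{\cS_k}(x_k)}\ge(1-A)\sqnorm{F_{\cS_k}(x_k)}$, and (c), which gives $\alpha_k\ge\gamma_k/(1+A)$. Together they yield
\[
\sqnorm{x_{k+1}-x_*}\le\sqnorm{x_k-x_*}-\tfrac{1-A}{1+A}\gamma_k^2\sqnorm{F_{\cS_k}(x_k)}
\]
almost surely. Telescoping from $0$ to $K$ and dividing by $K+1$ gives \eqref{eq:PolyakSEG-sampled-residual-rate}. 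No expectation is needed here because every step holds pathwise.

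\textbf{Strongly monotone part.} I will follow the deterministic argument again. Strong monotonicity of $F_{\cS_k}$ applied to the pair $x_k,\hx_k$ gives $\gamma_k\mu_{\cS_k}\sqnorm{F_{\cS_k}(x_k)}\le\inprod{F_{\cS_k}(x_k)}{F_{\cS_k}(x_k)-F_{\cS_k}(\hx_k)}$, which I substitute into the bracket. Then I use \cref{lemma:PolyakSEG-LS-key-properties}(a),(b) to bound the bracket below by $(1-A-2A/\omega)\sqnorm{F_{\cS_k}(x_k)}$. Choosing $\omega=2A/(1-A)$ makes this term vanish. The lower bound $\alpha_k\ge\gamma_k/(1+A)$ then yields the pathwise contraction
\[
\sqnorm{x_{k+1}-x_*}\le\bigl(1-\tfrac{2(1-A)}{(1+A)^2}\gamma_k\mu_{\cS_k}\bigr)\sqnorm{x_k-x_*}.
\]
Finally I take $\Expk{\cdot}$. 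Since $x_k$ is measurable with respect to the past, $\sqnorm{x_k-x_*}$ factors out, and this gives \eqref{eq:PolyakSEG-heterogeneous-expectation}.

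\textbf{Main obstacle.} The delicate point is measurability: $\gamma_k$ may depend on $\cS_k$, for instance through the critical condition, which is why the expectation is kept as $\Expk{\gamma_k\mu_{\cS_k}}$ and not split into a product. A second thing to check is that all the sample-wise steps, including $\alpha_k\ge\gamma_k/(1+A)$, hold almost surely, including the degenerate case $F_{\cS_k}(\hx_k)=0$. In that case $x_{k+1}=x_k$, and the inequalities still hold trivially once we check that $F_{\cS_k}(x_k)=0$ there, which follows from (a).
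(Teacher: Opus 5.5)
Your proposal is correct and follows essentially the paper's proof: a pathwise, samplewise descent bound using \cref{lemma:PolyakSEG-LS-key-properties}(b),(c) and telescoping for \eqref{eq:PolyakSEG-sampled-residual-rate}, then strong monotonicity on the pair $(x_k,\hx_k)$ with Young's inequality at $\omega=2A/(1-A)$ for the contraction. The paper only orders the steps differently: it applies Lemma (b) to the inner-product term first and then uses $\gamma_k\mu_{\cS_k}\le A$ (strong monotonicity plus the critical condition) to cancel the Young remainder, rather than substituting into the bracket as in the deterministic proof; also, in your degenerate case $x_{k+1}=x_k$, the contraction additionally needs $x_k=x_*$ whenever $\mu_{\cS_k}>0$, which follows at once from $F_{\cS_k}(x_k)=F_{\cS_k}(x_*)=0$ and strong monotonicity.
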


\begin{proof}

Because $F_i(x_*) = 0$ almost surely, we have $F_{\cS_k}(x_*) = 0$, and as in the deterministic case,
\begin{align}
    \sqnorm{x_{k+1} - x_*} & = \sqnorm{x_k - \alpha_k F_{\mathcal{S}_k}(\hx_k) - x_*} \nonumber \\
    & = \sqnorm{x_k - x_*} - 2\alpha_k \inprod{F_{\mathcal{S}_k}(\hx_k)}{x_k - x_*} + \alpha_k^2 \sqnorm{F_{\mathcal{S}_k}(\hx_k)} \nonumber \\
    & = \sqnorm{x_k - x_*} - 2\alpha_k \inprod{F_{\mathcal{S}_k}(\hx_k)}{x_k - \hat{x}_k} - 2\alpha_k \inprod{F_{\mathcal{S}_k}(\hx_k)}{\hat{x}_k - x_*} + \alpha_k^2 \sqnorm{F_{\mathcal{S}_k}(\hx_k)} \nonumber \\
    & = \sqnorm{x_k - x_*} - 2\alpha_k \inprod{F_{\mathcal{S}_k}(\hx_k)}{x_k - \hat{x}_k} - 2\alpha_k \inprod{F_{\mathcal{S}_k}(\hx_k) - F_{\mathcal{S}_k}(x_*)}{\hx_k - x_*} \nonumber  \\
    & \quad + \alpha_k^2 \sqnorm{F_{\mathcal{S}_k}(\hx_k)} \nonumber \\
    & \le \sqnorm{x_k - x_*} - 2\alpha_k \inprod{F_{\mathcal{S}_k}(\hx_k)}{x_k - \hat{x}_k} + \alpha_k^2 \sqnorm{F_{\mathcal{S}_k}(\hx_k)} \nonumber \\
    & = \sqnorm{x_k - x_*} - \alpha_k \inprod{F_{\mathcal{S}_k}(\hx_k)}{x_k - \hat{x}_k} .\label{eqn:PolyakSEG-LS-monotone-first-bound}
\end{align}
Note that by \Cref{lemma:PolyakSEG-LS-key-properties}, we have
\begin{align}
    \alpha_k \inprod{F_{\mathcal{S}_k}(\hx_k)}{x_k - \hat{x}_k} & = \alpha_k \gamma_k \inprod{F_{\mathcal{S}_k}(\hx_k)}{F_{\mathcal{S}_k}(x_k)} \nonumber \\
    & \ge \alpha_k \gamma_k (1-A) \sqnorm{F_{\mathcal{S}_k}(x_k)} \ge \frac{(1-A) \gamma_k^2}{1+A} \sqnorm{F_{\mathcal{S}_k}(x_k)} . \label{eqn:PolyakSEG-lemma-inner-product-bound}
\end{align}
We plug this back into \eqref{eqn:PolyakSEG-LS-monotone-first-bound}, rearrange and telescope to obtain
\begin{align*}
    \frac{1}{K+1} \sum_{k=0}^K \gamma_k^2 \sqnorm{F_{\cS_k}(x_k)}
    \le \frac{(1+A) \|x_0 - x_*\|^2}{(1 - A) (K+1)} .
\end{align*}
Next, in the case where $F_\cS$ have heterogeneous strong-monotonicity parameter $\mu_\cS \ge 0$, 
using $F_{\cS_k} (x_*) = 0$ we have
\begin{align}
    \sqnorm{x_{k+1}-x_*}
    &=
    \sqnorm{
        x_k-\alpha_kF_{\mathcal S_k}(\hx_k)-x_*
    }
    \nonumber\\
    &=
    \sqnorm{x_k-x_*}
    -
    2\alpha_k
    \inprod{F_{\mathcal S_k}(\hx_k)}{x_k-x_*}
    +
    \alpha_k^2
    \sqnorm{F_{\mathcal S_k}(\hx_k)}
    \nonumber\\
    &=
    \sqnorm{x_k-x_*}
    -
    2\alpha_k
    \inprod{F_{\mathcal S_k}(\hx_k)}
            {x_k-\hx_k}
    -
    2\alpha_k
    \inprod{F_{\mathcal S_k}(\hx_k)}
            {\hx_k-x_*}
    +
    \alpha_k^2
    \sqnorm{F_{\mathcal S_k}(\hx_k)}
    \nonumber\\
    &=
    \sqnorm{x_k-x_*}
    -
    2\alpha_k
    \inprod{F_{\mathcal S_k}(\hx_k)}
            {x_k-\hx_k}
    \nonumber\\
    &\quad
    -
    2\alpha_k
    \inprod{
        F_{\mathcal S_k}(\hx_k)
        -
        F_{\mathcal S_k}(x_*)
    }{
        \hx_k-x_*
    }
    +
    \alpha_k^2
    \sqnorm{F_{\mathcal S_k}(\hx_k)}
    \nonumber\\
    &\leq
    \sqnorm{x_k-x_*}
    -
    2\alpha_k
    \inprod{F_{\mathcal S_k}(\hx_k)}
            {x_k-\hx_k}
    -
    2\alpha_k\mu_{\mathcal S_k}
    \sqnorm{\hx_k-x_*}
    +
    \alpha_k^2
    \sqnorm{F_{\mathcal S_k}(\hx_k)}
    \nonumber\\
    &=
    \sqnorm{x_k-x_*}
    -
    2\alpha_k\mu_{\mathcal S_k}
    \sqnorm{\hx_k-x_*}
    -
    \alpha_k
    \inprod{F_{\mathcal S_k}(\hx_k)}
            {x_k-\hx_k}
    \nonumber \\
    & = 
    \sqnorm{x_k - x_*} - 2\alpha_k\mu_{\mathcal S_k} \sqnorm{\hx_k-x_*} - \alpha_k\gamma_k (1-A) \sqnorm{F_{\mathcal S_k}(x_k)}
    \label{eqn:PolyakSEG-heterogeneous-first-bound}
\end{align}
where the last line uses \eqref{eqn:PolyakSEG-lemma-inner-product-bound}.
By Young's inequality, we can also bound
\begin{align}
    2\alpha_k\mu_{\mathcal S_k} \sqnorm{\hx_k-x_*} & \geq
    2\alpha_k\mu_{\mathcal S_k}
    \left(
        \frac{1-A}{1+A}
    \right)
    \sqnorm{x_k-x_*}
    -
    \alpha_k\mu_{\mathcal S_k}
    \left(
        \frac{1-A}{A}
    \right)
    \sqnorm{x_k-\hx_k}
    \nonumber\\
    &\geq
    \frac{
        2(1-A)\gamma_k\mu_{\mathcal S_k}
    }{
        (1+A)^2
    }
    \sqnorm{x_k-x_*}
    -
    \left(
        \frac{1-A}{A}
    \right)
    \alpha_k\gamma_k^2\mu_{\mathcal S_k}
    \sqnorm{F_{\mathcal S_k}(x_k)}
    \nonumber\\
    &\geq
    \frac{
        2(1-A)\gamma_k\mu_{\mathcal S_k}
    }{
        (1+A)^2
    }
    \sqnorm{x_k-x_*}
    -
    (1-A)\alpha_k\gamma_k
    \sqnorm{F_{\mathcal S_k}(x_k)} ,
    \label{eqn:PolyakSEG-heterogeneous-second-bound}
\end{align}
where the last inequality uses the $\mu_{\mathcal S_k}$-strong monotonicity of $F_{\mathcal S_k}$ and the critical condition:
\begin{align*}
    &
    \inprod{
        x_k-\hx_k
    }{
        F_{\mathcal S_k}(x_k)
        -
        F_{\mathcal S_k}(\hx_k)
    }
    \geq
    \mu_{\mathcal S_k}
    \sqnorm{x_k-\hx_k}
    \\
    &\implies
    \gamma_k\mu_{\mathcal S_k}
    \sqnorm{F_{\mathcal S_k}(x_k)}
    \leq
    \inprod{
        F_{\mathcal S_k}(x_k)
    }{
        F_{\mathcal S_k}(x_k)
        -
        F_{\mathcal S_k}(\hx_k)
    }
    \leq
    A\sqnorm{F_{\mathcal S_k}(x_k)} .
\end{align*}
The desired bound follows by plugging \eqref{eqn:PolyakSEG-heterogeneous-second-bound} into \eqref{eqn:PolyakSEG-heterogeneous-first-bound} and taking the conditional expectation.
\end{proof}

\paragraph{Implications of \Cref{theorem:PolyakSEG-convergence1}.}
If $\gamma_k$ are uniformly bounded below by some $\underline{\gamma} > 0$, then \eqref{eq:PolyakSEG-sampled-residual-rate}, together with Jensen's inequality, implies 
\begin{align*}
    \frac{1}{K+1} \sum_{k=0}^K \Exp{ \sqnorm{F(x_k)} }
    \leq
    \frac{(1+A)\|x_0-x_*\|^2}{\underline{\gamma}^2(1-A)(K+1)}.
\end{align*}
If additionally, there exists $\mu > 0$ such that $\mathbb{E}_k [\mu_{\cS_k}] \ge \mu$ for all $k \ge 0$, then \eqref{eq:PolyakSEG-heterogeneous-expectation} implies
\begin{align*}
    \Exp{\sqnorm{x_{k} - x_*}} \le \left(1 - \frac{2(1-A) \underline{\gamma} \mu }{(1+A)^2} \right)^k \sqnorm{x_0 - x_*}
\end{align*}
for any $k \ge 0$. 
Note that this second result does not require all $\mu_i$ to be positive (and consequently $F_i$ to have $x_*$ as a unique solution); we allow sample operators to be non-strongly monotone and have a larger zero set containing $x_*$, as long as they have positive curvature in expectation.
This result is qualitatively similar to the linear convergence guarantee from \cite{vaswaniPainlessStochasticGradient2019}, although they considered stochastic \algname{EG} without Polyak step-size.

\paragraph{Line-search scheme for selecting $\gamma_k$.}
We can run a line-search scheme similar to the one used in \cref{alg:PolyakEG_linesearch_unified} (with $\lambda_{-1}^1 = 0$), where we take the step-size from the previous iteration and shrink it by a fixed factor until \eqref{eqn:gamma-critical-condition-stochastic} is satisfied.
In the case where each sample operator $F_\cS$ is $L$-Lipschitz, the line-search will always terminate successfully with a finite total while loop count and the step-size will be uniformly lower-bounded over stochastic paths by the arguments similar to \cref{theorem:L0L1-linesearch-deterministic}. 
We omit the technical details to avoid repetition.

\paragraph{Necessity of interpolation.}
The preceding result assumes the interpolation condition, and this is not an artifact of the analysis.
The following example shows that this assumption cannot be removed in general: even when all component operators are Lipschitz and strongly monotone, \algname{PolyakSEG} may fail to converge even when the critical condition is satisfied at all iterations.

\begin{proposition}[Failure of \algname{PolyakSEG} without interpolation]
\label{prop:PolyakSEG-noninterpolation}
There exists a non-interpolated stochastic root-finding problem where each sample operator is Lipschitz and strongly monotone, for which \algname{PolyakSEG} fails to converge to $x_*$ on every stochastic path with constant extrapolation step-size $\gamma_k = \gamma$ satisfying the critical condition with $A \in (0,1)$ for all $k\ge 0$.
\end{proposition}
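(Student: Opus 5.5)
We will use an explicit one-dimensional counterexample in which \algname{PolyakSEG} reduces to plain stochastic gradient steps with a constant step-size. Such steps cannot settle at a point that is not a common root. Take $d=1$, $n=2$, batch size one with uniform sampling, and
\[
    F_1(x) = x-1, \qquad F_2(x) = x+1, \qquad F(x) = \tfrac12\big(F_1(x)+F_2(x)\big) = x .
\]
Each $F_i$ is $1$-Lipschitz and $1$-strongly monotone. The unique zero of $F$ is $x_*=0$, while $F_1(0)=-1\neq 0$ and $F_2(0)=1\neq 0$, so the problem is not interpolated. Fix $A\in(0,1)$ and a constant $\gamma\in(0,A]$.

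The first step is to check the critical condition \eqref{eqn:gamma-critical-condition-stochastic}. Write $c_1=1$, $c_2=-1$, so $F_i(x)=x-c_i$. Then $\hx_k = x_k-\gamma(x_k-c_i)$ and $F_i(\hx_k) = (1-\gamma)(x_k-c_i)$. Hence
\[
    |F_i(\hx_k)-F_i(x_k)| = \gamma|F_i(x_k)| \le A|F_i(x_k)|,
\]
and the condition holds at every iteration on every path.

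The second step is to compute the Polyak step in closed form. Whenever $x_k\neq c_i$ we get
\[
    \alpha_k = \frac{\gamma F_i(x_k)F_i(\hx_k)}{F_i(\hx_k)^2} = \frac{\gamma}{1-\gamma}.
\]
This is well defined because $\gamma<1$. Consequently
\[
    x_{k+1} = x_k - \tfrac{\gamma}{1-\gamma}(1-\gamma)(x_k-c_i) = x_k-\gamma(x_k-c_i).
\]
So \algname{PolyakSEG} coincides exactly with constant-step SGD on the sampled component. In the degenerate case $x_k=c_i$, the division-by-zero convention gives $x_{k+1}=x_k$.

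The final step is the non-convergence argument, which holds pathwise. Suppose that on some path $x_k\to 0$. Then for all large $k$ we have $|x_k|<1/2$, so $x_k\neq c_{i}$ and the closed-form update applies. This gives
\[
    |x_{k+1}-x_k| = \gamma|x_k-c_{i_k}| \ge \gamma\big(1-|x_k|\big) > \gamma/2 .
\]
This contradicts $x_{k+1}-x_k\to 0$, which must hold for any convergent sequence. Hence no stochastic path converges to $x_*$.

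There is no substantial obstacle here. The only points needing care are the two checks above: that $\gamma<1$ keeps $\alpha_k$ finite, and that the zero-denominator convention cannot be triggered near $x_*$.
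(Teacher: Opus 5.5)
Your proposal is correct and follows essentially the same route as the paper. It uses the same two-component example $F_i(x)=x\pm 1$ and the same computation showing $\alpha_k=\gamma/(1-\gamma)$, so that \algname{PolyakSEG} collapses to constant-step SGD on the sampled component. It then uses the same observation that consecutive iterates cannot both be close to $x_*=0$. The one difference makes your version slightly more general: the paper fixes $x_0=0$ and $\gamma=A$ and proves $|x_k|<1$ by induction to keep the update well defined. Your contradiction argument only needs the closed-form update near $x_*$, so it covers every initialization and every constant $\gamma\in(0,A]$.
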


\begin{proof}
Consider $F(x) = \frac{F_1(x) + F_2(x)}{2}$, where $F_{i}\colon \reals \to \reals$ ($i=1,2$) are given by $F_{1}(x) = x+1$ and $F_{2}(x) = x-1$. 
Clearly, $F(x) = x$ has unique zero $x_* = 0$ but interpolation fails.

Suppose $\cS_{k} \in \{1,2\}$ is sampled uniformly and independently at each iteration, and write $F_{\mathcal S_{k}}(x)=x+\xi_{k}$ where $\xi_{k} \in \{+1,-1\}$.
Let $\gamma_k \equiv \gamma = A \in (0,1)$ for all $k\ge 0$.
Because $\hx_k = x_k - \gamma (x_k + \xi_{k}) = (1-\gamma) x_k - \gamma \xi_k$ and $F_{\cS_{k}}(\hx_k) = \hx_k + \xi_k = (1-\gamma) (x_k + \xi_k)$, we obtain
\begin{align*}
    \left| F_{\mathcal S_k}(\hx_k) - F_{\mathcal S_k}(x_k) \right| = \gamma |x_k + \xi_k| = A \left| F_{\cS_k} (x_k) \right| 
\end{align*}
so \eqref{eqn:gamma-critical-condition-stochastic} is satisfied.
Starting from $x_0 = 0$, \algname{PolyakSEG} updates are always well-defined and $|x_k| < 1$ for $k\ge 0$.
Indeed, if $|x_k| < 1$ then $x_k + \xi_k \ne 0$, so $F_{\cS_k} (\hx_k) \ne 0$ and $\alpha_k = \frac{\gamma}{1-\gamma}$. 
This then implies $x_{k+1} = x_k - \alpha_k F_{\cS_k}(\hx_k) = (1-\gamma) x_k - \gamma \xi_k$ and $|x_{k+1}| < 1$ by induction hypothesis, completing the induction.
Finally, because $\xi_k \in \{+1, -1\}$, we have $\gamma = |(1-\gamma) x_k - x_{k+1}| \le (1-\gamma) |x_k| + |x_{k+1}|$ for all $k\ge 0$, so the sequence $x_k$ cannot converge to $x_* = 0$.
\end{proof}

The above example shows that interpolation cannot be simply removed from the convergence guarantee for \algname{PolyakSEG}, and another mechanism is needed for convergence.
The next section analyzes \algname{DecPolyakSEG} (\cref{alg:decPolyakSEG}), an algorithmic modification that restores convergence without interpolation.

\subsection{\algname{DecPolyakSEG}: Residual Convergence without Interpolation}
\label{subsec:DecPolyakSEGNEW}

Here, we propose and analyze \algname{DecPolyakSEG} (\Cref{alg:decPolyakSEG}), whose convergence analysis does not require the interpolation condition used for \algname{PolyakSEG}. 
The key distinction from \algname{PolyakSEG} is that we let both step-size sequences decrease, as highlighted in green in \Cref{alg:decPolyakSEG}. 
Namely, we enforce
\[
    \gamma_k \le \frac{c_{k-1}}{c_k}\gamma_{k-1}
    \qquad
    \text{and}
    \qquad
    \alpha_k\le \alpha_{k-1}.
\]
The first condition is analogous to the decreasing step-size mechanism in \algname{DecSPS} of \citet{orvietoDynamicsSGDStochastic2022}, whereas the second condition is specific to our adaptive Polyak-type extragradient update. 
Similar to \algname{DecSPS}, our analysis of \algname{DecPolyakSEG} requires the following localization assumption on the trajectory, i.e., the iterates do not escape to infinity during its run.

\begin{assumption}
\label{assumption:bounded-iterates}
There exists a compact convex set $\mathcal{C} \subseteq \reals^d$ with diameter $D$, containing a zero $x_*$ of $F$ such that
$
    x_k,\hx_k\in\mathcal C
$
during the runtime of \algname{DecPolyakSEG}.
\end{assumption}

This condition can be derived if the problem has additional favorable structure such as strong monotonicity or interpolation, or if the algorithm has a stabilizing mechanism such as projection steps (with constrained problems).
To keep the exposition coherent with the previous sections, here we focus on the unconstrained monotone case and the algorithm without projection, and present our convergence analysis under Assumption~\ref{assumption:bounded-iterates}.

\subsubsection{Residual convergence for monotone problems} 

We provide the convergence theorem in terms of the expected squared residual norm $\Exp{\|F(\overline{x}_K)\|^2}$ for the averaged iterate $\overline{x}_K$,
where we use the same choice $c_k=\sqrt{k+1}$ as in the decreasing-step construction used by \citet{orvietoDynamicsSGDStochastic2022} in the analysis of \algname{DecSPS}.

\begin{theorem} 
\label{theorem:DecPolyakSEGLS_monotone}
Let each $F_i\colon \R^d \to \R^d$ be $L$-Lipschitz, and let $F$ be monotone.
Let $x_* \in \mathcal{C}$ be a zero of $F$, i.e., $F(x_*) = 0$.
Let \algname{DecPolyakSEG} run with $\gamma_{-1} > 0$, $c_{-1} = 1$, $c_k = \sqrt{k+1}$ for $k\ge 0$,
and $\gamma_k$ satisfying $\gamma_k \leq \frac{c_{k-1}}{c_k}\gamma_{k-1}$ and the stochastic critical condition \eqref{eqn:gamma-critical-condition-stochastic} for $k\ge 0$, with $A \in (0,1)$.
If Assumption~\ref{assumption:bounded-iterates} holds, then $\overline{x}_K := \frac{1}{K+1}\sum_{k=0}^K\hx_k$ satisfies 
\begin{equation}
\label{eq:DecPolyakSEG-direct-residual-bound}
\begin{aligned}
    \Exp{\|F(\overline{x}_K)\|^2}
    \leq
    4L\Bigg[
        &\frac{1}{K+1}
        \Exp{\frac{9D^2}{8\alpha_K}}
        +
        \frac{3\sigma D}{2\sqrt{K+1}}
        +
        \frac{2\gamma_{-1}}
        {(1-A)\sqrt{K+1}}
        \left(
            \sigma^2 + 9L^2D^2
        \right)
    \Bigg]
\end{aligned}
\end{equation}
where $\sigma^2 = 2L^2D^2 + 2 \mathbb{E}_\cS \left[ \|F_{\mathcal S}(x_*)\|^2 \right]$.
\end{theorem}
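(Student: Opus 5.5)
Our approach is a gap-type (restricted merit) argument converted into a residual bound through Lipschitz continuity. The key identity is the following. For any $x\in\mathcal C$, monotonicity of $F$ gives $\inprod{F(x)}{\overline{x}_K - x} \le \frac{1}{K+1}\sum_k \inprod{F(\hx_k)}{\hx_k - x}$. Choosing $x = \overline{x}_K - \frac{1}{2L}F(\overline{x}_K)$, or a related test point, and using $L$-Lipschitzness of $F$ (inherited from the $F_i$) would bound $\sqnorm{F(\overline{x}_K)}$ by $4L$ times the averaged weighted gap. This is the origin of the prefactor $4L$. We need the test point, or the whole argument, to stay in $\mathcal C$ (diameter $D$); the constants $9D^2/8$ and $9L^2D^2$ suggest the test point lies within distance $\tfrac{3}{2}D$ of the iterates. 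We therefore plan to bound $\sup_{x}$ of the averaged gap over points with $\norm{x-\hx_k}\le \tfrac32 D$.

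The main step is a one-step inequality for the stochastic operator. Expanding $\sqnorm{x_{k+1}-x} = \sqnorm{x_k - x} - 2\alpha_k\inprod{F_{\cS_k}(\hx_k)}{x_k-\hx_k} - 2\alpha_k\inprod{F_{\cS_k}(\hx_k)}{\hx_k - x} + \alpha_k^2\sqnorm{F_{\cS_k}(\hx_k)}$ and using $\alpha_k \le$ the Polyak ratio, we get $\alpha_k^2\sqnorm{F_{\cS_k}(\hx_k)}\le \alpha_k\inprod{F_{\cS_k}(\hx_k)}{x_k-\hx_k}$. This yields
\[
2\inprod{F_{\cS_k}(\hx_k)}{\hx_k - x} \le \frac{\sqnorm{x_k-x}-\sqnorm{x_{k+1}-x}}{\alpha_k}.
\]
We sum this in DecSPS style. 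Since $\alpha_k$ is nonincreasing, Abel summation gives $\sum_k \frac{1}{\alpha_k}(\sqnorm{x_k-x}-\sqnorm{x_{k+1}-x}) \le \frac{\max\sqnorm{\cdot}}{\alpha_K}$, which produces the $\frac{9D^2}{8\alpha_K}$ term.

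Next we replace $F_{\cS_k}(\hx_k)$ by $F(\hx_k)$. The difference splits into $\inprod{F_{\cS_k}(x_k)-F(x_k)}{\hx_k - x}$ plus a correction involving $(F_{\cS_k}-F)(\hx_k)-(F_{\cS_k}-F)(x_k)$. The first part is awkward because $\hx_k$ depends on $\cS_k$, so we rewrite $\hx_k = x_k - \gamma_k F_{\cS_k}(x_k)$. The $x_k$ component has zero conditional mean. The $\gamma_k$ component is bounded by $\gamma_k\,\mathbb E\norm{F_{\cS_k}(x_k)}^2$-type terms, controlled by $\sigma^2+9L^2D^2$, since $\norm{F_{\cS}(x)}^2 \le 2L^2D^2+2\norm{F_\cS(x_*)}^2$ on $\mathcal C$. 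The Lipschitz correction is at most $2L\norm{x_k-\hx_k}\cdot\tfrac32 D$, which is again of order $\gamma_k$. We use $\gamma_k\le \gamma_{-1}/\sqrt{k+1}$ together with $\alpha_k\le \gamma_k/(1-A)$ from Lemma~\ref{lemma:PolyakSEG-LS-key-properties}; summing gives $O(\gamma_{-1}\sqrt{K})$, which yields the third term.

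The main obstacle is that the test point $x$ depends on $\overline{x}_K$, hence on the randomness, so the martingale term $\inprod{F_{\cS_k}(x_k)-F(x_k)}{x_k - x}$ is not mean zero. Our first attempt is the standard auxiliary-sequence trick: introduce $y_{k+1} = y_k - \alpha_k$-free ghost iterates, or directly bound $\mathbb E\sup_x \frac{1}{K+1}\sum_k\inprod{\xi_k}{x}$ by $\frac{3D}{2}\,\mathbb E\norm{\frac{1}{K+1}\sum_k \xi_k}\le \frac{3D}{2}\frac{\sigma}{\sqrt{K+1}}$ using orthogonality of martingale differences. This produces exactly the $\frac{3\sigma D}{2\sqrt{K+1}}$ term. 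Collecting the three contributions and multiplying by $4L$ gives \eqref{eq:DecPolyakSEG-direct-residual-bound}.
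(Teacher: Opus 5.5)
Your architecture is the paper's: the same test point $u_K=\overline{x}_K-\frac{1}{2L}F(\overline{x}_K)$, the same Abel summation with nonincreasing $\alpha_k$ giving $\frac{R^2}{2\alpha_K}$ with $R=\frac32 D$, the same splitting of $F_{\mathcal S_k}(\hat x_k)-F(\hat x_k)$ through $x_k$, and the same martingale bound. These reproduce the first two terms of \eqref{eq:DecPolyakSEG-direct-residual-bound} exactly. Two small points:
\begin{itemize}
\item $u_K$ need not lie in $\mathcal C$, and that is fine because monotonicity is global. What you need is $\|u_K-z\|\le \frac32 D$ for all $z\in\mathcal C$, which follows from $\|F(\overline{x}_K)\|\le L\|\overline{x}_K-x_*\|\le LD$.
\item To justify $\langle \sum_k\xi_k, x\rangle\le \frac32 D\,\|\sum_k\xi_k\|$, where $\xi_k=F_{\mathcal S_k}(x_k)-F(x_k)$, you must center at $x_0$. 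Write $x_k-u_K=(x_k-x_0)+(x_0-u_K)$, as the paper does.
\end{itemize}

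The gap is in the third term. When you pass to $\langle F_{\mathcal S_k}(\hat x_k),\hat x_k-x\rangle\le \frac{\|x_k-x\|^2-\|x_{k+1}-x\|^2}{2\alpha_k}$, you discard $-\frac{\gamma_k}{2}\langle F_{\mathcal S_k}(\hat x_k),F_{\mathcal S_k}(x_k)\rangle$. That is exactly the term the paper needs. By Lemma~\ref{lemma:PolyakSEG-LS-key-properties}(b) it is at most $-\frac{(1-A)\gamma_k}{2}\|F_{\mathcal S_k}(x_k)\|^2$. Young's inequality with weight $\frac{1-A}{4}$ on $\|F_{\mathcal S_k}(x_k)\|^2$ then absorbs both $2L\gamma_k\|F_{\mathcal S_k}(x_k)\|\,\|u-\hat x_k\|$ and $\gamma_k\|\xi_k\|\,\|F_{\mathcal S_k}(x_k)\|$. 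What remains is $\frac{\gamma_k}{1-A}\bigl(\|\xi_k\|^2+4L^2R^2\bigr)$, and this is where both the factor $\frac{1}{1-A}$ and $9L^2D^2=4L^2R^2$ come from.

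The bound $\alpha_k\le\gamma_k/(1-A)$ that you invoke plays no role here, since no $\alpha_k$ multiplies these terms in your inequality. If instead you bound the $\gamma_k$-terms directly in expectation, as you propose, the natural per-step bound is $\frac{\gamma_{-1}}{\sqrt{k+1}}\bigl(\sigma^2+3LD\sigma\bigr)$. After AM--GM this gives a third term of the form $2\gamma_{-1}\bigl(\frac{5}{4}\sigma^2+9L^2D^2\bigr)\sqrt{K+1}$. That has the same rate and even avoids the blow-up as $A\to 1$, but it is a different inequality and does not imply the stated one. For example, with $A=0.1$ and $\sigma=6LD$ one gets $\sigma^2+3LD\sigma=54L^2D^2>50L^2D^2=\frac{\sigma^2+9L^2D^2}{1-A}$. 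So your final step, ``collecting the three contributions gives \eqref{eq:DecPolyakSEG-direct-residual-bound},'' is not justified as written. Keep the negative term and absorb the cross terms as above.
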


\begin{proof}
Note that since each component operator is $L$-Lipschitz, both $F_{\mathcal S_k}$ and $F$ are $L$-Lipschitz.
Fix any $u\in\R^d$. From the update rule $x_{k+1} = x_k-\alpha_kF_{\mathcal S_k}(\hx_k)$, we obtain
\begin{align}
    \|x_{k+1}-u\|^2
    &=
    \left\|
        x_k-u-\alpha_kF_{\mathcal S_k}(\hx_k)
    \right\|^2
    \nonumber\\
    &=
    \|x_k-u\|^2
    -
    2\alpha_k
    \inprod{F_{\mathcal S_k}(\hx_k)}{x_k-u}
    +
    \alpha_k^2
    \|F_{\mathcal S_k}(\hx_k)\|^2.
    \label{eq:DecPolyakSEG-residual-distance-expansion}
\end{align}
Rearranging \eqref{eq:DecPolyakSEG-residual-distance-expansion} gives

\begin{align}
    \inprod{F_{\mathcal S_k}(\hx_k)}{\hx_k-u}
    &=
    \inprod{F_{\mathcal S_k}(\hx_k)}{x_k-u}
    -
    \inprod{F_{\mathcal S_k}(\hx_k)}{x_k-\hx_k}
    \nonumber\\
    &=
    \frac{
        \|x_k-u\|^2-\|x_{k+1}-u\|^2
    }{
        2\alpha_k
    }
    +
    \frac{\alpha_k}{2}
    \|F_{\mathcal S_k}(\hx_k)\|^2
    -
    \inprod{F_{\mathcal S_k}(\hx_k)}{x_k-\hx_k} \nonumber \\
    &\leq
    \frac{
        \|x_k-u\|^2-\|x_{k+1}-u\|^2
    }{
        2\alpha_k
    }
    -
    \frac12
    \inprod{F_{\mathcal S_k}(\hx_k)}{x_k-\hx_k}
    \nonumber\\
    &=
    \frac{
        \|x_k-u\|^2-\|x_{k+1}-u\|^2
    }{
        2\alpha_k
    }
    -
    \frac{\gamma_k}{2}
    \inprod{
        F_{\mathcal S_k}(\hx_k)
    }{
        F_{\mathcal S_k}(x_k)
    } 
    \label{eq:DecPolyakSEG-residual-sampled-comparison}
\end{align}
where the third line uses $\alpha_k \leq \frac{ \inprod{F_{\mathcal S_k}(\hx_k)}{x_k-\hx_k} }{ \|F_{\mathcal S_k (\hx_k)\|^2 }}$.
Now combining the identity
\begin{align*}
    \inprod{F(\hx_k)}{\hx_k-u}
    =
    \inprod{F_{\mathcal S_k}(\hx_k)}{\hx_k-u}
    +
    \inprod{
        F_{\mathcal S_k}(\hx_k)-F(\hx_k)
    }{
        u-\hx_k
    }
\end{align*}
with \eqref{eq:DecPolyakSEG-residual-sampled-comparison} yields
\begin{align}
    \inprod{F(\hx_k)}{\hx_k-u}
    &\leq
    \frac{
        \|x_k-u\|^2-\|x_{k+1}-u\|^2
    }{
        2\alpha_k
    }
    -
    \frac{\gamma_k}{2}
    \inprod{
        F_{\mathcal S_k}(\hx_k)
    }{
        F_{\mathcal S_k}(x_k)
    }
    \nonumber\\
    &\quad
    +
    \inprod{
        F_{\mathcal S_k}(\hx_k)-F(\hx_k)
    }{
        u-\hx_k
    } .
    \label{eq:DecPolyakSEG-residual-comparison-start}
\end{align}
We control the last term in \eqref{eq:DecPolyakSEG-residual-comparison-start} as
\begin{align}
    & \inprod{F_{\mathcal S_k}(\hx_k)-F(\hx_k)}{u-\hx_k} \nonumber \\
    & =
    \inprod{F_{\mathcal S_k}(\hx_k)-F_{\mathcal S_k}(x_k)}{u-\hx_k}
    + \inprod{F(x_k)-F(\hx_k)}{u-\hx_k}
    + \inprod{F_{\mathcal S_k}(x_k)-F(x_k)}{u-\hx_k} \nonumber \\
    & \le \left\|
        F_{\mathcal S_k}(\hx_k)-F_{\mathcal S_k}(x_k)
    \right\|
    \|u-\hx_k\|
    +
    \left\|
        F(x_k)-F(\hx_k)
    \right\|
    \|u-\hx_k\| 
    + \inprod{F_{\mathcal S_k}(x_k)-F(x_k)}{u-\hx_k} \nonumber \\
    & \le
    2L\|x_k-\hx_k\|\|u-\hx_k\| + \inprod{F_{\mathcal S_k}(x_k)-F(x_k)}{u-\hx_k} .
    \label{eq:DecPolyakSEG-noise-exact-decomposition}
\end{align}
where the last inequality uses $L$-Lipschitzness of $F_{\mathcal S_k}$ and $F$.
Finally, for the last term, 
\begin{align}
    &
    \inprod{
        F_{\mathcal S_k}(x_k)-F(x_k)
    }{
        u-\hx_k
    }
    \nonumber\\
    &=
    \inprod{
        F_{\mathcal S_k}(x_k)-F(x_k)
    }{
        u-x_0
    }
    +
    \inprod{
        F_{\mathcal S_k}(x_k)-F(x_k)
    }{
        x_0-x_k
    }
    +
    \inprod{
        F_{\mathcal S_k}(x_k)-F(x_k)
    }{
        x_k-\hx_k
    }
    \nonumber \\
    & \le
    \inprod{
        F_{\mathcal S_k}(x_k)-F(x_k)
    }{
        u-x_0
    }
    +
    \inprod{
        F_{\mathcal S_k}(x_k)-F(x_k)
    }{
        x_0-x_k
    }
    +
    \|F_{\mathcal S_k}(x_k)-F(x_k)\|
    \|x_k-\hx_k\| .
    \label{eq:DecPolyakSEG-noise-vector-decomposition}
\end{align}
Combining \eqref{eq:DecPolyakSEG-noise-exact-decomposition} and \eqref{eq:DecPolyakSEG-noise-vector-decomposition} and using $x_k-\hx_k = \gamma_kF_{\mathcal S_k}(x_k)$ we obtain
\begin{align}
    \inprod{
        F_{\mathcal S_k}(\hx_k)-F(\hx_k)
    }{
        u-\hx_k
    }
    & \leq
    \inprod{
        F_{\mathcal S_k}(x_k)-F(x_k)
    }{
        u-x_0
    }
    +
    \inprod{
        F_{\mathcal S_k}(x_k)-F(x_k)
    }{
        x_0-x_k
    }
    \nonumber\\
    & \quad
    +
    2L\gamma_k
    \|F_{\mathcal S_k}(x_k)\|
    \|u-\hx_k\|
    +
    \gamma_k
    \|F_{\mathcal S_k}(x_k)-F(x_k)\|
    \|F_{\mathcal S_k}(x_k)\|.
    \label{eq:DecPolyakSEG-residual-noise-decomposition}
\end{align}
We plug \eqref{eq:DecPolyakSEG-residual-noise-decomposition} back into
\eqref{eq:DecPolyakSEG-residual-comparison-start} and apply \cref{lemma:PolyakSEG-LS-key-properties}(b): $\inprod{F_{\mathcal S_k}(\hx_k)}{F_{\mathcal S_k}(x_k)} \geq (1-A)\|F_{\mathcal S_k}(x_k)\|^2$, and use Young's inequality to bound the following terms:
\begin{align*}
    2L
    \|F_{\mathcal S_k}(x_k)\|
    \|u-\hx_k\|
    &\leq
    \frac{1-A}{4}
    \|F_{\mathcal S_k}(x_k)\|^2
    +
    \frac{4L^2}{1-A}\|u-\hx_k\|^2 \\
    \|F_{\mathcal S_k}(x_k)-F(x_k)\|
    \|F_{\mathcal S_k}(x_k)\|
    &\leq
    \frac{1-A}{4}
    \|F_{\mathcal S_k}(x_k)\|^2
    +
    \frac{1}{1-A}
    \|F_{\mathcal S_k}(x_k)-F(x_k)\|^2.
\end{align*}
The resulting bound is 
\begin{align}
    \inprod{F(\hx_k)}{\hx_k-u} 
    & \leq
    \frac{
        \|x_k-u\|^2-\|x_{k+1}-u\|^2
    }{
        2\alpha_k
    }
    +
    \inprod{
        F_{\mathcal S_k}(x_k)-F(x_k)
    }{
        u-x_0
    }
    \nonumber \\
    &\quad
    +
    \inprod{
        F_{\mathcal S_k}(x_k)-F(x_k)
    }{
        x_0-x_k
    }
    +
    \frac{\gamma_k}{1-A}
    \left(
        \|F_{\mathcal S_k}(x_k)-F(x_k)\|^2
        +
        4L^2\|u-\hx_k\|^2
    \right).
    \label{eq:DecPolyakSEG-residual-comparison}
\end{align}
The above holds pathwisely and for any $u \in \reals^d$; hence, we can sum this up for $k=0,\dots,K$ and substitute $u = u_K := \overline{x}_K - \frac{1}{2L}F(\overline{x}_K)$.
By Assumption~\ref{assumption:bounded-iterates}, we have $\overline{x}_K \in \cC$ (as it is a convex combination of $\hx_k \in \cC$), and therefore $\norm{\overline{x}_K - x_*} \le D$ and $\norm{F(\overline{x}_K)} \le L\norm{\overline{x}_K - x_*} \le LD$.
This implies that for every $z\in \cC$,
\begin{align}
\label{eq:DecPolyakSEG-uK-distance}
    \norm{u_K - z} \le \norm{u_K - \overline{x}_K} + \norm{\overline{x}_K - z} = \frac{\norm{F(\overline{x}_K)}}{2L} + D \le \frac{3D}{2} := R .
\end{align}
Using \eqref{eq:DecPolyakSEG-uK-distance} and nonincreasingness of $\alpha_k$, we obtain
\begin{align}
    &
    \sum_{k=0}^K
    \frac{
        \|x_k-u_K\|^2-\|x_{k+1}-u_K\|^2
    }{
        2\alpha_k
    } \nonumber \\
    & =
    \frac{\|x_0-u_K\|^2}{2\alpha_0}
    -
    \frac{\|x_{K+1}-u_K\|^2}{2\alpha_K}
    +
    \sum_{k=1}^K
    \left(
        \frac{1}{2\alpha_k}
        -
        \frac{1}{2\alpha_{k-1}}
    \right)
    \|x_k-u_K\|^2 \nonumber \\
    & \leq
    \frac{R^2}{2\alpha_0}
    +
    R^2
    \sum_{k=1}^K
    \left(
        \frac{1}{2\alpha_k}
        -
        \frac{1}{2\alpha_{k-1}}
    \right) =
    \frac{R^2}{2\alpha_K}.
    \label{eq:DecPolyakSEG-weighted-telescoping-bound}
\end{align}
Summing \eqref{eq:DecPolyakSEG-residual-comparison} for $k=0,\dots,K$ with $u=u_K$ and using \eqref{eq:DecPolyakSEG-weighted-telescoping-bound} and $\|u_K-\hx_k\|\leq R$ yields
\begin{align}
    & \sum_{k=0}^K
    \inprod{F(\hx_k)}{\hx_k-u_K} \nonumber \\
    & \leq
    \frac{R^2}{2\alpha_K}
    +
    \inprod{
        \sum_{k=0}^K
        \bigl(
            F_{\mathcal S_k}(x_k)-F(x_k)
        \bigr)
    }{
        u_K-x_0
    }
    \nonumber\\
    &\quad
    +
    \sum_{k=0}^K
    \inprod{
        F_{\mathcal S_k}(x_k)-F(x_k)
    }{
        x_0-x_k
    }
    +
    \frac{1}{1-A}
    \sum_{k=0}^K
    \gamma_k
    \Bigl(
        \|F_{\mathcal S_k}(x_k)-F(x_k)\|^2
        +
        4L^2R^2
    \Bigr).
    \label{eq:DecPolyakSEG-residual-summed}
\end{align}

We next bound the right hand side of \eqref{eq:DecPolyakSEG-residual-summed} in expectation.
Again, denote by $\Expk{\cdot}$ the conditional expectation with respect to the randomness revealed before drawing $\mathcal S_k$.
Then
\begin{align*}
    \Expk{
        \|F_{\mathcal S_k}(x_k)-F(x_k)\|^2
    }
    & =
    \Expk{\|F_{\mathcal S_k}(x_k)\|^2}
    -
    \|F(x_k)\|^2
    \\
    & \leq
    2\Expk{
        \|F_{\mathcal S_k}(x_k)
          -F_{\mathcal S_k}(x_*)\|^2
    }
    +
    2\Expk{\|F_{\mathcal S_k}(x_*)\|^2} \nonumber\\
    & \leq
    2L^2\|x_k-x_*\|^2
    +
    2\Exp{\|F_{\mathcal S}(x_*)\|^2}
    \leq
    \sigma^2
\end{align*}
so $\Exp{\sqnorm{F_{\mathcal S_k}(x_k)-F(x_k)}} \le \sigma^2$ by the tower property.
Next, because $F_{\cS_j}(x_j) - F(x_j)$ for $j<k$ is measurable with respect to information revealed before drawing $\cS_k$ and $\Expk{F_{\cS_k}(x_k) - F(x_k)} = 0$, we have $\Exp{
    \inprod{
        F_{\mathcal S_j}(x_j)-F(x_j)
    }{
        F_{\mathcal S_k}(x_k)-F(x_k)
    }
}
= 0$ and therefore, 
\begin{align*}
    & \Exp{
        \left\|
            \sum_{k=0}^K
            \bigl(
                F_{\mathcal S_k}(x_k)-F(x_k)
            \bigr)
        \right\|^2
    } \\
    & =
    \sum_{k=0}^K
    \Exp{
        \|F_{\mathcal S_k}(x_k)-F(x_k)\|^2
    }
    +
    2
    \sum_{0\leq j<k\leq K}
    \Exp{
        \inprod{
            F_{\mathcal S_j}(x_j)-F(x_j)
        }{
            F_{\mathcal S_k}(x_k)-F(x_k)
        }
    }
    \\
    &=
    \sum_{k=0}^K
    \Exp{
        \|F_{\mathcal S_k}(x_k)-F(x_k)\|^2
    }
    \\
    &\leq
    (K+1)\sigma^2 .
\end{align*}
Using $\norm{u_k - x_0} \le R$ and Cauchy--Schwarz inequality, we then obtain
\begin{align}
    \Exp{
        \inprod{
            \sum_{k=0}^K
            \bigl(
                F_{\mathcal S_k}(x_k)-F(x_k)
            \bigr)
        }{
            u_K-x_0
        }
    }
    & \leq
    \Exp{
        \left\|
            \sum_{k=0}^K
            \bigl(
                F_{\mathcal S_k}(x_k)-F(x_k)
            \bigr)
        \right\|
        \|u_K-x_0\|
    }
    \nonumber\\
    &\leq
    R
    \Exp{
        \left\|
            \sum_{k=0}^K
            \bigl(
                F_{\mathcal S_k}(x_k)-F(x_k)
            \bigr)
        \right\|
    }
    \nonumber\\
    &\leq
    R
    \left(
        \Exp{
            \left\|
                \sum_{k=0}^K
                \bigl(
                    F_{\mathcal S_k}(x_k)-F(x_k)
                \bigr)
            \right\|^2
        }
    \right)^{1/2}
    \nonumber\\
    &\leq
    \sigma R\sqrt{K+1}.
    \label{eq:DecPolyakSEG-martingale-sum}
\end{align}
Furthermore, using $\Expk{F_{\cS_k}(x_k) - F(x_k)} = 0$ again and applying the tower rule, we have
\begin{align}
    \Exp{
        \sum_{k=0}^K
        \inprod{
            F_{\mathcal S_k}(x_k)-F(x_k)
        }{
            x_0-x_k
        }
    }
    =
    0 .
\label{eq:DecPolyakSEG-centered-cross-term}
\end{align}
Finally, the step-size condition for \algname{DecPolyakSEG} gives $c_k\gamma_k \le c_{k-1}\gamma_{k-1} \le \cdots \le c_{-1}\gamma_{-1} = \gamma_{-1}$, 
and since we take $c_k = \sqrt{k+1}$, we have $\gamma_k \le \frac{\gamma_{-1}}{\sqrt{k+1}}$.
Hence
\begin{align}
    &
    \Exp{
        \sum_{k=0}^K
        \gamma_k
        \Bigl(
            \|F_{\mathcal S_k}(x_k)-F(x_k)\|^2
            +
            4L^2R^2
        \Bigr)
    }
    \leq
    \sum_{k=0}^K
    \frac{\gamma_{-1}}{\sqrt{k+1}}
    \Exp{
        \|F_{\mathcal S_k}(x_k)-F(x_k)\|^2
        +
        4L^2R^2
    }
    \nonumber\\
    &\leq
    \gamma_{-1}
    \left(
        \sigma^2+4L^2R^2
    \right)
    \sum_{k=0}^K\frac{1}{\sqrt{k+1}}
    \leq
    2\gamma_{-1}
    \left(
        \sigma^2+4L^2R^2
    \right)
    \sqrt{K+1}.
    \label{eq:DecPolyakSEG-weighted-noise-bound}
\end{align}
Combining \eqref{eq:DecPolyakSEG-residual-summed}, \eqref{eq:DecPolyakSEG-martingale-sum}, \eqref{eq:DecPolyakSEG-centered-cross-term} and \eqref{eq:DecPolyakSEG-weighted-noise-bound} gives
\begin{align}
    \Exp{
        \sum_{k=0}^K
        \inprod{F(\hx_k)}{\hx_k-u_K}
    }
    \leq
    \Exp{\frac{R^2}{2\alpha_K}}
    +
    \sigma R\sqrt{K+1}
    +
    \frac{2\gamma_{-1}}{1-A}
    \left(
        \sigma^2+4L^2R^2
    \right)
    \sqrt{K+1}.
    \label{eq:DecPolyakSEG-residual-upper}
\end{align}
By monotonicity of $F$, we have $\inprod{F(\hx_k)}{\hx_k-u_K} \geq \inprod{F(u_K)}{\hx_k-u_K}$.
Using this to lower bound the left hand side of \eqref{eq:DecPolyakSEG-residual-upper} and summing over $k=0,\dots,K$ we obtain
\begin{align*}
    \Exp{
        \sum_{k=0}^K
        \inprod{F(\hx_k)}{\hx_k-u_K}
    } \ge
    \Exp{
        \sum_{k=0}^K
        \inprod{F(u_K)}{\hx_k-u_K}
    } 
    = (K+1) \Exp{\inprod{F(u_K)}{\overline{x}_K - u_K}} .
\end{align*}
Because $\overline{x}_K - u_K = \frac{1}{2L} F(\overline{x}_K)$, by $L$-Lipschitzness of $F$ gives
\begin{align*}
    \Exp{\inprod{F(u_K)}{\overline{x}_K - u_K}} & = \frac{1}{2L} \Exp{\inprod{F(u_K)}{F(\overline{x}_K)}} \\
    & = \frac{1}{2L} \Exp{\|F(\overline{x}_K)\|^2
        +
        \inprod{
            F(u_K)-F(\overline{x}_K)
        }{
            F(\overline{x}_K)
        }} \\
    & \geq
    \frac{1}{2L} \Exp{
        \|F(\overline{x}_K)\|^2
        -
        L\|u_K-\overline{x}_K\|
        \|F(\overline{x}_K)\|
    } =
    \frac{1}{4L} \Exp{\sqnorm{F(\overline{x}_K)}} .
\end{align*}
Combining this with \eqref{eq:DecPolyakSEG-residual-upper}, dividing by $\frac{K+1}{4L}$ and plugging in $R = \frac{3D}{2}$ proves the desired bound.

\end{proof}

\subsubsection{Step-size choices and line-search}
\Cref{theorem:DecPolyakSEGLS_monotone} does not require a particular procedure for selecting the extrapolation step-size $\gamma_k$. 
It applies to any sequence satisfying $\gamma_k \leq \frac{c_{k-1}}{c_k}\gamma_{k-1}$ and the stochastic critical condition \eqref{eqn:gamma-critical-condition-stochastic}.
For example, when the Lipschitz constant $L$ is known, a simple admissible choice is $\gamma_k = \frac{A}{L\sqrt{k+1}}$.
When $L$ is unknown, the critical condition can instead be enforced by line-search. 
We provide a basic and effective implementation in \cref{alg:decPolyakSEG_linesearch1}, which starts from the largest permitted choice $\gamma_k = \frac{c_{k-1}}{c_k} \gamma_{k-1}$ and geometrically decreases it \eqref{eqn:gamma-critical-condition-stochastic} until is satisfied.
We provide the corollary of \cref{theorem:DecPolyakSEGLS_monotone}, which captures both cases mentioned above.

\begin{algorithm}[ht]
\small
    \caption{\algname{\textcolor{PineGreen}{DecPolyakSEG-LS}}}
    \label{alg:decPolyakSEG_linesearch1}
    \begin{algorithmic}[1]
        \REQUIRE Initial point $x_0 \in \R^d$, initial step-size $\gamma_{-1} > 0$, $\alpha_{-1} = \infty$, line-search factor $\beta > 1$, $A \in (0, 1)$ and a non-decreasing sequence $\{ c_k\}_{k=-1}^{\infty}$.
        \FOR{$k = 0, 1,...,K$}
        \STATE $\gamma_k = \frac{c_{k-1}}{c_{k}}\gamma_{k-1}$.
        \STATE Sample $\mathcal{S}_k \subseteq [n]$.
        \STATE $\hx_k = x_k - \gamma_k F_{\mathcal{S}_k}(x_k)$.
        \WHILE{$\| F_{\mathcal{S}_k}(x_k) - F_{\mathcal{S}_k}(\hx_k) \| > A \|F_{\mathcal{S}_k}(x_k)\|$}
            \STATE $\gamma_k = \gamma_k / \beta$
            \STATE $\hx_k = x_k - \gamma_k F_{\mathcal{S}_k}(x_k)$
        \ENDWHILE
        \vspace{2mm}
        \STATE $\alpha_k = \min \left\{ \frac{\la F_{\mathcal{S}_k}(\hx_k), x_k - \hx_k \ra}{\|F_{\mathcal{S}_k}(\hx_k)\|^2}, \alpha_{k-1}\right\}$.
        \vspace{2mm}
        \STATE $x_{k+1} = x_k - \alpha_k F_{\mathcal{S}_k}(\hx_k)$.
        \ENDFOR
    \end{algorithmic}
\end{algorithm}

\begin{corollary} 
\label{corollary:DecPolyakSEG-explicit-residual-rate}
Under the assumptions of \cref{theorem:DecPolyakSEGLS_monotone}, suppose 
\begin{equation}
\label{eq:DecPolyakSEG-lower-envelope}
    \gamma_k
    \geq
    \frac{\underline{\gamma}}{\sqrt{k+1}}
    \qquad
    \forall k = 0,1,\dots 
\end{equation}
almost surely for some $\underline{\gamma}>0$.
Then
\begin{equation}
\label{eq:DecPolyakSEG-direct-residual-rate}
\begin{aligned}
    \Exp{\|F(\overline{x}_K)\|^2}
    \leq
    \frac{4L}{\sqrt{K+1}}
    \Bigg[
        &\frac{9(1+A)D^2}{8\underline{\gamma}}
        +
        \frac{3\sigma D}{2} 
        +
        \frac{2\gamma_{-1}}{1-A}
        \left(
            \sigma^2 + 9L^2D^2
        \right)
    \Bigg]
    =
    \mathcal O\left(\frac{1}{\sqrt{K+1}}\right) .
\end{aligned}
\end{equation}
The condition~\eqref{eq:DecPolyakSEG-lower-envelope} holds in both of the following cases:
\begin{enumerate}
    \item For $\gamma_{-1} = \frac{A}{L}$ and $\gamma_k = \frac{A}{L\sqrt{k+1}}$ for $k\ge 0$, we have \eqref{eq:DecPolyakSEG-lower-envelope} with $\underline{\gamma} = \frac{A}{L}$.
    
    \item For \algname{DecPolyakSEG-LS} (\cref{alg:decPolyakSEG_linesearch1}) with $c_k=\sqrt{k+1}$, we have \eqref{eq:DecPolyakSEG-lower-envelope} with 
    \[
        \underline{\gamma}
        =
        \min\left\{
            \gamma_{-1},
            \frac{A}{\beta L}
        \right\} .
    \]    
\end{enumerate}
\end{corollary}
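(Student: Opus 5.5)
The corollary has two parts: the rate bound, and the claim that \eqref{eq:DecPolyakSEG-lower-envelope} holds in the two listed cases. The plan is to derive the rate directly from \cref{theorem:DecPolyakSEGLS_monotone}, and then verify the lower envelope in each case.

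\textbf{Step 1: the rate.} The only $K$-dependent term in \eqref{eq:DecPolyakSEG-direct-residual-bound} that is not already of order $1/\sqrt{K+1}$ is $\frac{1}{K+1}\Exp{\frac{9D^2}{8\alpha_K}}$. By \cref{lemma:PolyakSEG-LS-key-properties}(c), which holds almost surely and for \algname{DecPolyakSEG} in particular, we have $\alpha_K \ge \frac{\gamma_K}{1+A}$. Combining this with \eqref{eq:DecPolyakSEG-lower-envelope} gives
\[
\frac{1}{\alpha_K} \le \frac{(1+A)\sqrt{K+1}}{\underline{\gamma}} \quad \text{almost surely}.
\]
Substituting, the first term is at most $\frac{9(1+A)D^2}{8\underline{\gamma}\sqrt{K+1}}$. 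Factoring $\frac{1}{\sqrt{K+1}}$ out of all three terms then yields \eqref{eq:DecPolyakSEG-direct-residual-rate}.

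\textbf{Step 2, case 1 (explicit schedule).} We must check three things.
\begin{itemize}
\item The theorem's monotonicity requirement holds: with $c_k=\sqrt{k+1}$ and $c_{-1}=1$, we have $\gamma_k=\frac{c_{k-1}}{c_k}\gamma_{k-1}$ with equality, including at $k=0$ since $\gamma_0=\gamma_{-1}=A/L$.
\item The stochastic critical condition holds: $\gamma_k\le A/L$ and each $F_{\cS_k}$ is $L$-Lipschitz, so the first item of \cref{prop:gamma-critical-condition-sufficient}, applied to $F_{\cS_k}$, gives \eqref{eqn:gamma-critical-condition-stochastic}.
\item The envelope holds trivially, with $\underline{\gamma}=A/L$.
\end{itemize}

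\textbf{Step 2, case 2 (line-search).} The argument is an induction, and this is the main point requiring care.
\begin{itemize}
\item \emph{Termination.} The while loop terminates as soon as $\gamma_k\le A/L$, again by \cref{prop:gamma-critical-condition-sufficient}.
\item \emph{Accepted step.} If the loop shrinks at least once, the accepted step satisfies $\gamma_k> \frac{A}{\beta L}$: the previous candidate $\beta\gamma_k$ was rejected, so it must have exceeded $A/L$. If no shrink happens, $\gamma_k=\frac{c_{k-1}}{c_k}\gamma_{k-1}$.
\item \emph{Inductive claim.} We show $\sqrt{k+1}\,\gamma_k\ge \underline{\gamma}:=\min\{\gamma_{-1},\frac{A}{\beta L}\}$.
\item \emph{Base case.} The base case is $\sqrt{0}\cdot$ \ldots, i.e.\ $c_{-1}\gamma_{-1}=\gamma_{-1}\ge\underline\gamma$.
\item \emph{Inductive step, no shrink.} Then $c_k\gamma_k=c_{k-1}\gamma_{k-1}\ge\underline\gamma$.
\item \emph{Inductive step, with shrink.} Then $\sqrt{k+1}\,\gamma_k\ge\gamma_k>\frac{A}{\beta L}\ge\underline\gamma$, using $\sqrt{k+1}\ge1$.
\end{itemize}
The subtle point is that we need the weighted quantity $c_k\gamma_k$, rather than $\gamma_k$ itself, to be nonincreasing only in no-shrink steps. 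It is reset to at least $\underline\gamma$ otherwise, and this holds on every stochastic path. The same computation also confirms $\gamma_k\le \frac{c_{k-1}}{c_k}\gamma_{k-1}$, since shrinking only decreases the step. Hence the line-search variant satisfies all hypotheses of \cref{theorem:DecPolyakSEGLS_monotone}.
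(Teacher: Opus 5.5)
Your proposal is correct and follows the paper's proof. The rate comes from $\alpha_K \ge \gamma_K/(1+A)$ in \cref{lemma:PolyakSEG-LS-key-properties}(c) combined with the envelope. The line-search case rests on the fact that any rejected trial step must exceed $A/L$.

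Your explicit induction on $c_k\gamma_k$ is slightly more careful than the paper's case split. The paper leaves implicit the iterations where a trial step is accepted without backtracking after an earlier shrink. Your base case should be stated as $c_{-1}\gamma_{-1}=\gamma_{-1}\ge\underline{\gamma}$, since writing $\sqrt{k+1}$ at $k=-1$ would give $0$.
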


\begin{proof}
Assuming \eqref{eq:DecPolyakSEG-lower-envelope}, by \Cref{lemma:PolyakSEG-LS-key-properties}, we have $\alpha_K \geq \frac{\gamma_K}{1+A} \geq \frac{\underline{\gamma}}{(1+A)\sqrt{K+1}}$, and thus
\[
    \frac{1}{K+1}
    \Exp{\frac{R^2}{\alpha_K}}
    \leq
    \frac{(1+A)R^2}
         {\underline{\gamma}\sqrt{K+1}}.
\]
Substituting this into \eqref{eq:DecPolyakSEG-direct-residual-bound} immediately yields \eqref{eq:DecPolyakSEG-direct-residual-rate}. 

Since Case~1 is evident, we consider Case~2 on \algname{DecPolyakSEG-LS}.
Observe that any $\gamma_k \le \frac{A}{L}$ satisfies the critical condition~\eqref{eqn:gamma-critical-condition-stochastic} by $L$-Lipschitzness of $F$. 
Thus, if $\gamma_{-1} \le \frac{A}{L}$ then we will have $\gamma_k = \frac{c_{-1}}{c_k} \gamma_{-1} = \frac{\gamma_{-1}}{\sqrt{k+1}}$.
Hence, we may assume $\gamma_{-1} > \frac{A}{L}$. 
Now at any iteration $k \ge 0$, let $\widetilde{\gamma}_k = \frac{c_{k-1}}{c_k}\gamma_{k-1}$ be the trial step-size where line-search starts from.
If $\widetilde{\gamma}_k\leq \frac{A}{L}$, it is accepted without backtracking.
Otherwise, the accepted step-size will be $\gamma_k = \beta^{-m_k} \widetilde{\gamma}_k$ for some $m_k \ge 1$, where $\beta^{-m_k + 1} \widetilde{\gamma}_k > \frac{A}{L}$,
which implies $\gamma_k > \frac{A}{\beta L} \ge \frac{A}{\sqrt{k+1} \beta L}$.
Together with the case $\gamma_{-1} \le \frac{A}{L}$, this proves that \eqref{eq:DecPolyakSEG-lower-envelope} holds with $\underline{\gamma} = \min\left\{ \gamma_{-1}, \frac{A}{\beta L} \right\}$.
\end{proof}

The previous results rely on the localization condition (Assumption~\ref{assumption:bounded-iterates}) to guarantee convergence as it is commonly done in analyses of stochastic and adaptive algorithms.
This type of analysis isolates the stability requirement on the algorithm trajectory that does not necessarily hold for general monotone stochastic problems.
Nevertheless, the localization assumption can be removed when the problem has an additional structure such as samplewise strong monotonicity, as the following proposition shows.

\begin{proposition}
\label{proposition:DecPolyakSEG_strong_monotone}
Let each $F_i$ be $L$-Lipschitz and $\mu$-strongly monotone, and let
$x_*$ satisfy $F(x_*)=0$. 
Let $\{x_k\}$ be generated by \algname{DecPolyakSEG} with $\gamma_k$ satisfying the stochastic critical condition \eqref{eqn:gamma-critical-condition-stochastic} for $k\ge 0$ with $A \in (0,1)$.
Then, almost surely, 
\begin{align}
    \sup_{k \ge 0} \sqnorm{x_k-x_*}
    \leq
    \max\left\{
        \sqnorm{x_0-x_*},
        \frac{1}{\mu^2(1-A)}
        \underset{1\le i \le n}{\max} \norm{F_i(x_*)}
    \right\} < \infty .
\label{eq:DecPolyakSEG-strong-monotone-stability}
\end{align}
Consequently, the extrapolated points $\hx_k$
also stay bounded almost surely.
\end{proposition}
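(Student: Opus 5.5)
The plan is to prove the bound by induction on $k$, showing that $\sqnorm{x_{k+1}-x_*}\le\max\{\sqnorm{x_k-x_*},\,\rho^2\}$ almost surely for a radius $\rho$ depending only on $\mu$, $A$, $G\eqdef\max_i\norm{F_i(x_*)}$, and possibly $L$. Unrolling this gives the $\sup$ bound. The threshold I expect to obtain is of the form $\rho^2\asymp\frac{G^2}{\mu^2(1-A)}$, possibly with extra factors of $L/\mu$. The expression in \eqref{eq:DecPolyakSEG-strong-monotone-stability} has $G$ to the first power against $\mu^{-2}$, which is dimensionally suspicious, so I would aim for the squared form and reconcile constants at the end.

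The key difference from \cref{theorem:PolyakSEG-convergence1} is that $F_{\cS_k}(x_*)\neq 0$, but $\norm{F_{\cS_k}(x_*)}\le G$ since $F_{\cS_k}$ is an average of the $F_i$. Each $F_{\cS_k}$ is $\mu$-strongly monotone and $L$-Lipschitz. I expand $\sqnorm{x_{k+1}-x_*}$ as in the proof of \cref{theorem:deterministic-master-theorem}. I then use $\alpha_k\le\frac{\inprod{F_{\cS_k}(\hx_k)}{x_k-\hx_k}}{\sqnorm{F_{\cS_k}(\hx_k)}}$, which holds for \algname{DecPolyakSEG} because of the $\min$, to absorb $\alpha_k^2\sqnorm{F_{\cS_k}(\hx_k)}$. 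Writing $\inprod{F_{\cS_k}(\hx_k)}{\hx_k-x_*}=\inprod{F_{\cS_k}(\hx_k)-F_{\cS_k}(x_*)}{\hx_k-x_*}+\inprod{F_{\cS_k}(x_*)}{\hx_k-x_*}$ then gives
\[
\sqnorm{x_{k+1}-x_*}\le\sqnorm{x_k-x_*}-\alpha_k\gamma_k\inprod{F_{\cS_k}(\hx_k)}{F_{\cS_k}(x_k)}-2\alpha_k\norm{\hx_k-x_*}\bigl(\mu\norm{\hx_k-x_*}-G\bigr).
\]
The middle term is nonpositive by \cref{lemma:PolyakSEG-LS-key-properties}(b).

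This leads to a case split on $\norm{\hx_k-x_*}$.
\begin{itemize}
\item \emph{Case $\norm{\hx_k-x_*}\ge G/\mu$.} The last term is also nonpositive, so $\norm{x_{k+1}-x_*}\le\norm{x_k-x_*}$.
\item \emph{Case $\norm{\hx_k-x_*}<G/\mu$.} I bound $\norm{x_{k+1}-x_*}$ directly. First, $\norm{x_{k+1}-\hx_k}\le\norm{x_k-\hx_k}+\alpha_k\norm{F_{\cS_k}(\hx_k)}$. By \cref{lemma:PolyakSEG-LS-key-properties}(a),(c), both pieces are at most a constant times $\gamma_k\norm{F_{\cS_k}(x_k)}\le\frac{\gamma_k}{1-A}\norm{F_{\cS_k}(\hx_k)}$. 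Next, $\norm{F_{\cS_k}(\hx_k)}\le L\norm{\hx_k-x_*}+G<(L/\mu+1)G$. For the step-size I use the bound $\gamma_k\le A/\mu$ derived in the proof of \cref{theorem:PolyakSEG-convergence1}, namely $\gamma_k\mu\sqnorm{F_{\cS_k}(x_k)}\le A\sqnorm{F_{\cS_k}(x_k)}$ from strong monotonicity plus the critical condition. Combining these, $\norm{x_{k+1}-x_*}\le\rho$ for an explicit $\rho=\cO\bigl(\frac{G}{\mu(1-A)}(1+L/\mu)\bigr)$.
\end{itemize}

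Together the two cases give the invariant $\norm{x_{k+1}-x_*}\le\max\{\norm{x_k-x_*},\rho\}$, and induction yields the $\sup$ bound. Boundedness of $\hx_k$ then follows from $\norm{\hx_k-x_k}=\gamma_k\norm{F_{\cS_k}(x_k)}\le\frac{A}{\mu}\bigl(L\norm{x_k-x_*}+G\bigr)$.

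The main obstacle I expect is the second case: getting a radius without the $L/\mu$ factor, so as to match the stated constant. To try to remove it, I would first bound $\norm{x_k-x_*}$ in terms of $\norm{\hx_k-x_*}$ using $\gamma_k\mu\le A$. I would also use the fact that $x_{k+1}$ is the projection of $x_k$ onto the hyperplane $\{x:\inprod{F_{\cS_k}(\hx_k)}{\hx_k-x}=0\}$ (where $\alpha_k$ is the exact Polyak value), or a point between $x_k$ and that hyperplane when the $\min$ is active. If this fails, I would state the bound with the $L$-dependent radius, which still gives the qualitative conclusion of almost-sure boundedness.
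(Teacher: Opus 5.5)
Your argument is sound as far as it goes. The one-step inequality is correct. Case 1 gives $\norm{x_{k+1}-x_*}\le\norm{x_k-x_*}$. In Case 2, the chain $\norm{x_{k+1}-\hx_k}\le\frac{2\gamma_k}{1-A}\norm{F_{\cS_k}(\hx_k)}$, $\norm{F_{\cS_k}(\hx_k)}\le L\norm{\hx_k-x_*}+G$, $\gamma_k\mu\le A$ does produce a radius $\rho$ independent of $\norm{x_k-x_*}$. So you obtain almost-sure boundedness of $x_k$, and then of $\hx_k$ exactly as the paper does at the end.

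You are also right about the typo. The paper's own recursion produces $\sqnorm{F_{\cS_k}(x_*)}$, so the threshold should read $\frac{1}{\mu^2(1-A)}\max_i\sqnorm{F_i(x_*)}$.

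However, your route does not prove that inequality. Your radius is of order $\frac{G}{\mu(1-A)}(1+L/\mu)$. This is worse than $\frac{G}{\mu\sqrt{1-A}}$ both through the $L/\mu$ factor and in the dependence on $1-A$. The loss comes from the case split itself: in Case 2 you must locate $x_{k+1}$ in absolute terms via the step length, which is what brings in $L$.

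The missing idea is to drop the case split and turn the recursion into a convex combination.

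1. Bound the cross term by Young's inequality with weight $\mu$: $-2\inprod{F_{\cS_k}(x_*)}{\hx_k-x_*}\le\mu\sqnorm{\hx_k-x_*}+\frac1\mu\sqnorm{F_{\cS_k}(x_*)}$. This leaves $-\alpha_k\mu\sqnorm{\hx_k-x_*}$ in the descent inequality.
2. Transfer to $x_k$ with $-\sqnorm{\hx_k-x_*}\le-(1-A)\sqnorm{x_k-x_*}+\frac{1-A}{A}\sqnorm{x_k-\hx_k}$.
3. Absorb the extra term using strong monotonicity along the extrapolation segment, $\mu\sqnorm{x_k-\hx_k}\le\gamma_k\inprod{F_{\cS_k}(x_k)}{F_{\cS_k}(x_k)-F_{\cS_k}(\hx_k)}$. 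Combined with $-\alpha_k\gamma_k\inprod{F_{\cS_k}(\hx_k)}{F_{\cS_k}(x_k)}$, this leaves $\frac{\alpha_k\gamma_k}{A}\bigl((1-A)\sqnorm{F_{\cS_k}(x_k)}-\inprod{F_{\cS_k}(x_k)}{F_{\cS_k}(\hx_k)}\bigr)$, which is nonpositive by \cref{lemma:PolyakSEG-LS-key-properties}(b).

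What remains is
\[
\sqnorm{x_{k+1}-x_*}\le(1-\theta_k)\sqnorm{x_k-x_*}+\theta_k\frac{\sqnorm{F_{\cS_k}(x_*)}}{\mu^2(1-A)},\qquad \theta_k=(1-A)\alpha_k\mu\le\mu\gamma_k\le A<1,
\]
where the bound on $\theta_k$ uses \cref{lemma:PolyakSEG-LS-key-properties}(c) and your $\gamma_k\mu\le A$. Since the right-hand side is at most the maximum of the two quantities being averaged, induction gives the sharp, $L$-free threshold directly.

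Your instinct to relate $\norm{x_k-x_*}$ to $\norm{\hx_k-x_*}$ points the right way. It has to be done with squared norms inside the descent inequality, before any case distinction.
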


\begin{proof}
First, note that since each $F_i$ is $L$-Lipschitz and $\mu$-strongly monotone,
every mini-batch operator $F_{\mathcal S_k}$ is also $L$-Lipschitz and
$\mu$-strongly monotone. 
By the choice of $\alpha_k$ in \algname{DecPolyakSEG}, we have $\alpha_k \sqnorm{F_{\mathcal S_k}(\hx_k)} \leq \inprod{F_{\mathcal S_k}(\hx_k)}{x_k-\hx_k}$, so
\begin{align}
    \sqnorm{x_{k+1}-x_*} 
    &=
    \sqnorm{x_k-x_*}
    -2\alpha_k
    \inprod{F_{\mathcal S_k}(\hx_k)}{x_k-x_*}
    +\alpha_k^2\sqnorm{F_{\mathcal S_k}(\hx_k)}
    \nonumber \\
    & \leq
    \sqnorm{x_k-x_*}
    -\alpha_k
    \inprod{F_{\mathcal S_k}(\hx_k)}{x_k-\hx_k}
    -2\alpha_k
    \inprod{F_{\mathcal S_k}(\hx_k)}{\hx_k-x_*} .
    \label{eqn:strong-monotone-localization-eq1}
\end{align}
Strong monotonicity of $F_{\mathcal S_k}$ and Young's inequality yield
\begin{align*}
    -2\inprod{F_{\mathcal S_k}(\hx_k)}{\hx_k-x_*}
    &=
    -2\inprod{
        F_{\mathcal S_k}(\hx_k)-F_{\mathcal S_k}(x_*)
    }{
        \hx_k-x_*
    }
    -2\inprod{F_{\mathcal S_k}(x_*)}{\hx_k-x_*}
    \\
    &\leq
    -2\mu\sqnorm{\hx_k-x_*}
    +\frac{1}{\mu}\sqnorm{F_{\mathcal S_k}(x_*)}
    +\mu\sqnorm{\hx_k-x_*}
    \\
    &=
    -\mu\sqnorm{\hx_k-x_*}
    +\frac{1}{\mu}\sqnorm{F_{\mathcal S_k}(x_*)}.
\end{align*}
Applying this to \eqref{eqn:strong-monotone-localization-eq1} and using $x_k-\hx_k=\gamma_kF_{\mathcal S_k}(x_k)$, it follows that
\begin{align}
    \sqnorm{x_{k+1}-x_*}
    &\leq
    \sqnorm{x_k-x_*}
    -\alpha_k\mu\sqnorm{\hx_k-x_*}
    +\frac{\alpha_k}{\mu}
    \sqnorm{F_{\mathcal S_k}(x_*)}
    -\alpha_k\gamma_k
    \inprod{
        F_{\mathcal S_k}(\hx_k)
    }{
        F_{\mathcal S_k}(x_k)
    } .
\label{eq:DecPolyakSEG-strong-monotone-first-step}
\end{align}
Note that by Young's inequality
\begin{align*}
    -\sqnorm{\hx_k-x_*}
    \leq
    -(1-A)\sqnorm{x_k-x_*}
    +\frac{1-A}{A}\sqnorm{x_k-\hx_k} 
\end{align*}
holds, and strong monotonicity of $F_{\mathcal S_k}$ gives
\begin{align*}
    \mu\sqnorm{x_k-\hx_k}
    &\leq
    \inprod{
        F_{\mathcal S_k}(x_k)-F_{\mathcal S_k}(\hx_k)
    }{
        x_k-\hx_k
    }
    =
    \gamma_k
    \inprod{
        F_{\mathcal S_k}(x_k)
    }{
        F_{\mathcal S_k}(x_k)-F_{\mathcal S_k}(\hx_k)
    }
\end{align*}
Substituting these two inequalities into
\eqref{eq:DecPolyakSEG-strong-monotone-first-step} yields
\begin{align}
    \sqnorm{x_{k+1}-x_*}
    &\leq
    \left(1-(1-A)\alpha_k\mu\right)
    \sqnorm{x_k-x_*}
    \nonumber \\
    &\quad
    +\frac{\alpha_k\gamma_k}{A}
    \left(
        (1-A)\sqnorm{F_{\mathcal S_k}(x_k)}
        -
        \inprod{
            F_{\mathcal S_k}(x_k)
        }{
            F_{\mathcal S_k}(\hx_k)
        }
    \right)
    +\frac{\alpha_k}{\mu}
    \sqnorm{F_{\mathcal S_k}(x_*)}
    \nonumber \\
    & \le \left(1-(1-A)\alpha_k\mu\right)
    \sqnorm{x_k-x_*}
    +
    \frac{\alpha_k}{\mu}
    \sqnorm{F_{\mathcal S_k}(x_*)}  
    \nonumber \\
    & \le 
    \left(1-(1-A)\alpha_k\mu\right)
    \sqnorm{x_k-x_*}
    +
    (1-A) \alpha_k \mu \frac{\sqnorm{F_{\mathcal S_k}(x_*)}}{\mu^2 (1-A)} 
    \label{eq:DecPolyakSEG-strong-monotone-recursion}
\end{align}
where the second last inequality follows from \cref{lemma:PolyakSEG-LS-key-properties}(b).
Applying this recursively, we obtain \eqref{eq:DecPolyakSEG-strong-monotone-stability}, provided that $(1-A) \alpha_k \mu \in [0,1]$.
This holds because
\begin{align}
    \mu\sqnorm{x_k-\hx_k}
    &\leq
    \inprod{
        F_{\mathcal S_k}(x_k)-F_{\mathcal S_k}(\hx_k)
    }{
        x_k-\hx_k
    }
    \nonumber \\
    &\leq
    \norm{
        F_{\mathcal S_k}(x_k)-F_{\mathcal S_k}(\hx_k)
    }
    \norm{x_k-\hx_k}
    \leq
    A\norm{F_{\mathcal S_k}(x_k)}
    \norm{x_k-\hx_k} 
    \label{eqn:DecPolyakSEG-strong-monotone-and-critical-condition}
\end{align}
by strong monotonicity of $F_{\cS_k}$ and the critical condition, so substituting $x_k-\hx_k = \gamma_k F_{\mathcal S_k}(x_k)$ yields $\mu \gamma_k \le A$ and \cref{lemma:PolyakSEG-LS-key-properties}(c) gives $\alpha_k\leq\frac{\gamma_k}{1-A}$.

Finally, by \eqref{eqn:DecPolyakSEG-strong-monotone-and-critical-condition} and $L$-Lipschitzness of $F_{\cS_k}$ we have
\begin{align*}
    \norm{\hx_k-x_*}
    &\leq
    \norm{x_k-x_*}+\norm{x_k-\hx_k}
    \\
    &\leq
    \norm{x_k-x_*}
    +\frac{A}{\mu}\norm{F_{\mathcal S_k}(x_k)}
    \\
    &\leq
    \norm{x_k-x_*}
    +\frac{A}{\mu} \left( \norm{F_{\mathcal S_k}(x_k) - F_{\cS_k}(x_*)} + \norm{F_{\cS_k}(x_*)} \right)
    \\
    &\leq
    \left(1+\frac{AL}{\mu}\right)\norm{x_k-x_*}
    +\frac{A}{\mu}\norm{F_{\mathcal S_k}(x_*)}
    \\
    &\leq
    \left(1+\frac{AL}{\mu}\right)\norm{x_k-x_*}
    +
    \frac{A}{\mu}
    \underset{1\le i \le n}{\max} \norm{F_i(x_*)}
\end{align*}
which shows that $\hx_k$ also stays bounded almost surely.
\end{proof}

\paragraph{Remark.} While \cref{proposition:DecPolyakSEG_strong_monotone} uses the fact that $F = \frac{1}{n} \sum_{i=1}^n F_i$ has a finite-sum structure, all the other results of this section hold for stochastic problems given in general expectation form  $F = \mathbb{E}_\xi [F_\xi]$ under the mild assumption $\mathbb{E}_\xi \left[ \sqnorm{F_\xi (x_*)} \right] < \infty$.

\section{Numerical Experiments}
\label{sec:numerical_experiment}

We provide numerical evaluations to illustrate the two main algorithmic consequences of our theory.
First, the Polyak correction can improve over the extragradient update using equal step-sizes, even for generalized Lipschitz and H\"older continuous problems in the deterministic setting.
Second, the line-search procedure can effectively find $\gamma_k$ satisfying the critical condition and provides an empirically competitive parameter-free algorithm.
In the stochastic setting, we compare \algname{PolyakEG-LS} and \algname{DecPolyakEG-LS} with existing adaptive algorithms for stochastic monotone inclusion/variational inequality problems, respectively, on interpolated and non-interpolated problems.
This experimentally demonstrates that the effectiveness of the Polyak idea extends beyond the deterministic setting.
For performance plots, we plot the relative errors, either $\frac{\sqnorm{x_k-x_*}}{\sqnorm{x_0-x_*}}$ or $\frac{\sqnorm{F(\overline{x}_k)}}{\sqnorm{F(x_0)}}$, where $\overline{x}_k$ is the ergodic average of extrapolated points,
versus the number of operator evaluations, including rejected line-search trials.
In all experiments, the standard \algname{EG} or \algname{SEG} uses the same update step-size as the extrapolation step-size, i.e., $\alpha_k = \gamma_k$.

\subsection{Deterministic Setting}

\paragraph{Accelerating effect of the Polyak-type update step.}\mbox{}\par
\noindent
\begin{minipage}[t]{.57\textwidth}
\vspace{0pt}
We consider the 
quadratic min-max game
\begin{align}
\label{eqn:min-max-game-for-trajectory-plot}
    \underset{y \in \reals}{\text{minimize}} \,\, \underset{z \in \reals}{\text{maximize}} \,\, g(y,z) =\tfrac12y^2+5yz-25z^2 ,
\end{align}
equivalent to \eqref{eq:root_finding} with $F(y,z) =(y+5z,-5y+50z)$, which has the unique solution $(0,0)$.
We plot in \Cref{fig:2Dcontour} the first 10 iterations of \algname{EG} and \algname{PolyakEG} with $(y_0, z_0) = (1, 1)$, both using the extrapolation step-size $\gamma_k = \frac{1}{L}$ with tight Lipschitz constant $L$. 
We observe that \algname{PolyakEG} makes much larger progress per iteration compared to \algname{EG} using $\alpha_k = \gamma_k$, which can be attributed to the optimized choice of $\alpha_k$ in \algname{PolyakEG}, which is the step-size used for making the updates.

\end{minipage}
\hfill
\begin{minipage}[t]{.38\textwidth}
    \vspace{0pt}
    \centering
    \includegraphics[width=\linewidth]{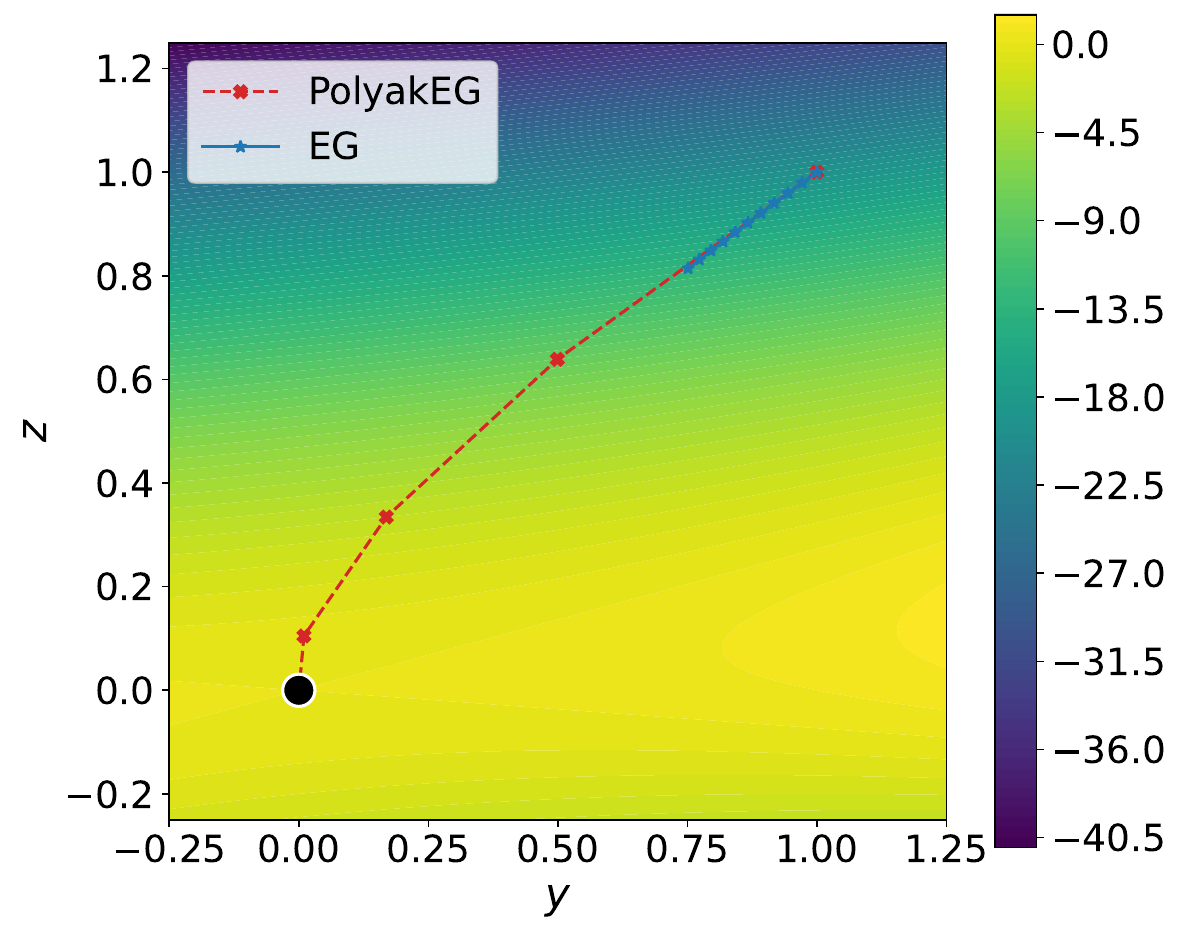}
    \captionsetup{font=small}
    \captionof{figure}{Trajectories of \algname{EG} and \algname{PolyakEG} for \eqref{eqn:min-max-game-for-trajectory-plot} using $\gamma_k=1/L$. The unique solution $(0,0)$ is marked in black.}
    \label{fig:2Dcontour}
\end{minipage}

\begin{figure}[ht]
    \centering
    \begin{subfigure}[t]{.48\textwidth}
        \centering
        \includegraphics[width=\linewidth]{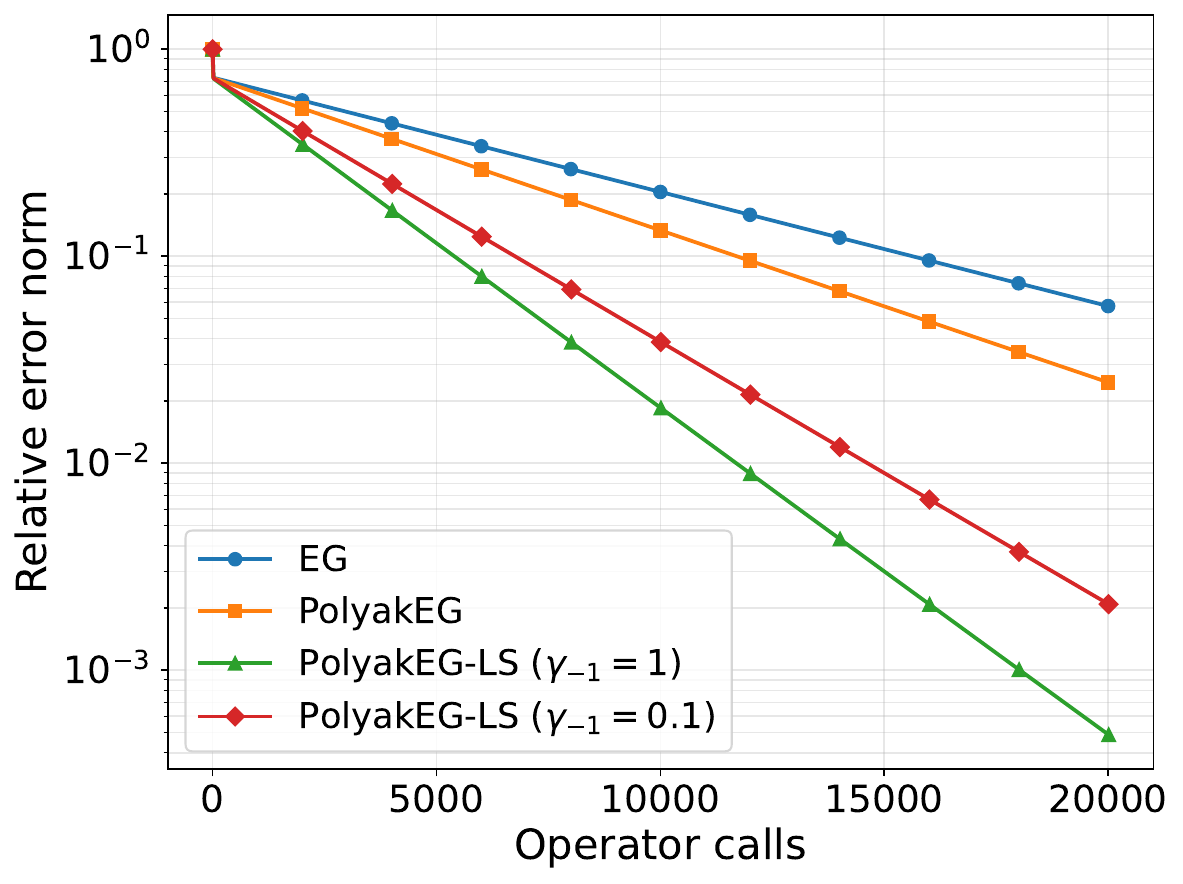}
        \caption{Affine root finding}
        \label{fig:deterministic-affine}
    \end{subfigure}
    \hfill
    \begin{subfigure}[t]{.48\textwidth}
        \centering
        \includegraphics[width=\linewidth]{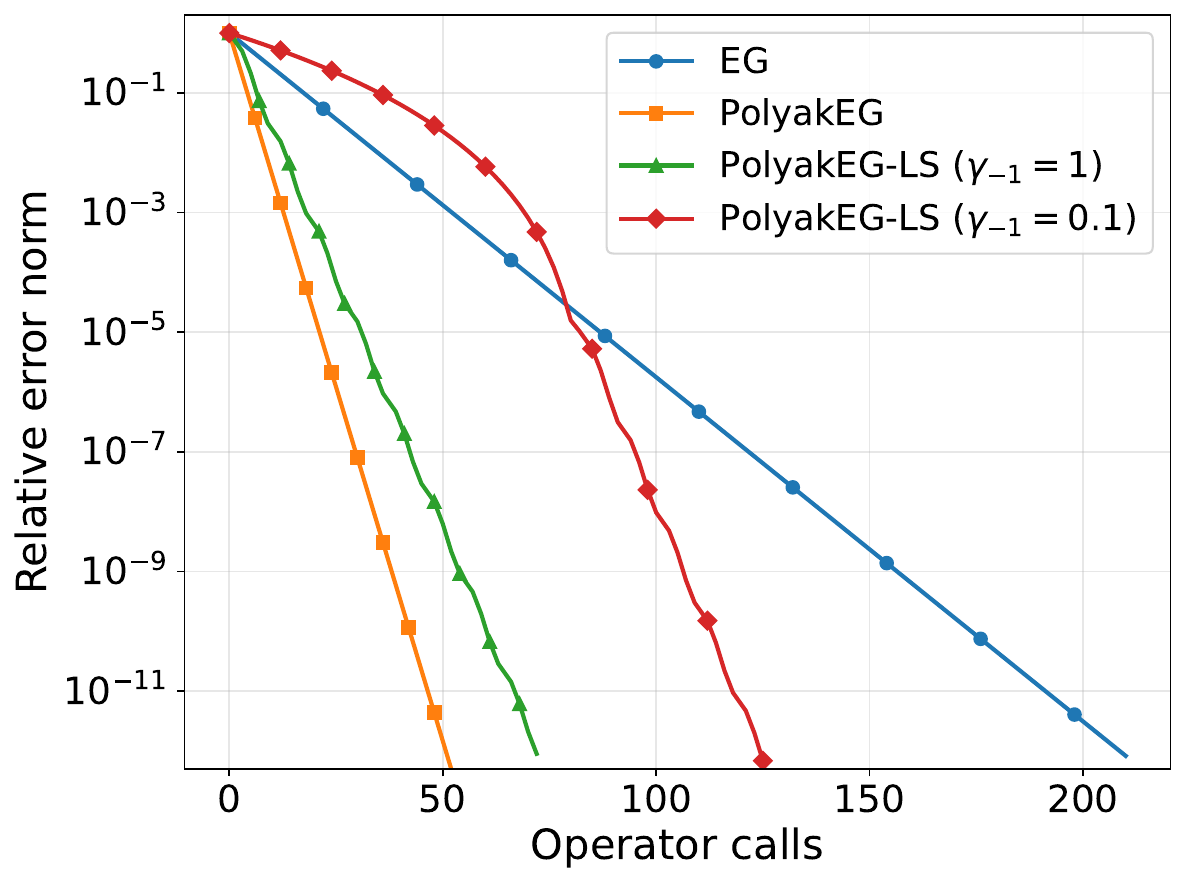}
        \caption{H\"older-continuous root finding}
        \label{fig:holder-continuous-root}
    \end{subfigure}
    \caption{Performance on deterministic, Lipschitz/H\"older-continuous problems versus operator evaluations, including rejected line-search trials. 
    Both plots report the relative error $\frac{\norm{x_k-x_*}}{\norm{x_0-x_*}}$.
    }
    \label{fig:deterministic-performance}
\end{figure}

\paragraph{Deterministic affine root finding.}
We consider a 20-dimensional strongly monotone affine operator $F(x)=Mx+b$, obtained by averaging 100 random block operators of the form $M_i = \begin{bmatrix} A_i & B_i\\ -B_i & C_i \end{bmatrix}$,
where eigenvalues of positive definite $A_i, C_i$ are uniformly distributed on $[10^3, 10^4]$ and $[1, 10]$ respectively, $B_i$ is symmetric with uniformly random eigenvalues in $[0, 100]$, and $b$ has normally distributed entries. 
We consider the theoretically motivated step-sizes: $\gamma_k = \alpha_k = \frac{1}{4L}$ for \algname{EG} and $\gamma_k = \frac{1}{3L}$ for \algname{PolaykEG}, where $L$ is the tight Lipschitz constant.
\algname{PolyakEG-LS} selects $\gamma_k$ by line search in \cref{alg:PolyakEG_linesearch_unified}, using $\beta=3$, $A=0.8$, $\lambda_{-1}^1 = 0$ and $\lambda_{-1}^0 \in \left\{ \nu_A, 10\nu_A \right\}$ (so that initial extrapolation step-size is either $1.0$ or $0.1$).
We plot the algorithms' performance starting at a Gaussian random initial point in \cref{fig:deterministic-affine}.
The fixed-step \algname{PolyakEG} converges faster than \algname{EG}, and both line-search methods achieve substantially smaller errors than versions with fixed $\gamma_k$, displaying little backtracking overhead.

\paragraph{H\"older-continuous nongradient operator.}
We consider a test problem from \citet{zhang2026convergence}: for $x\in\R^{2000}$, define
\[
    F(x)=
    \begin{cases}
        \norm{x}^{\holderexponent-1}(I+B)x & \text{if } x\ne 0\\
        0 & \text{if } x=0
    \end{cases} 
    \quad \text{with} \quad 
    B=\rho
    \begin{bmatrix}
        0 & I_{1000}\\
        -I_{1000}&0
    \end{bmatrix},
\]
where we use $\holderexponent=0.8$ and $\rho=0.1$. 
This operator has the unique root $x_*=0$ and is monotone since $\rho < \frac{2\sqrt{\holderexponent}}{1-\holderexponent}$. 
On the other hand, it is not the gradient of a function because $B$ is nonzero and skew-symmetric.
$F$ is globally $(L,\holderexponent)$-H\"older with $L=2^{1-\holderexponent}\sqrt{1+\rho^2}$.
Starting from a normalized Gaussian vector, we run both \algname{EG} and
\algname{PolyakEG} with 
\[
    \gamma_k
    =\left(\frac{A}{L}\right)^{\frac{1}{\holderexponent}}
      \norm{F(x_k)}^{\frac{1-\holderexponent}{\holderexponent}} ,
    \qquad A=0.95 .
\]
\algname{PolyakEG-LS} instead uses $\beta=3$, $\lambda_{-1}^1=0$, and
$\gamma_{-1} \in \left\{1.0, 0.1 \right\}$, without access to $L$ or $\holderexponent$.
As shown in \cref{fig:holder-continuous-root}, \algname{PolyakEG} converges approximately four times faster than \algname{EG} sharing the extrapolation step-size $\gamma_k$, while \algname{PolyakEG-LS} converges with slightly slower but competitive rates.
This illustrates that the Polyak correction can provide a clear gain for nonsmooth monotone problems, and the parameter-free version retains most of this improvement.

\begin{figure}[t]
    \centering
    \begin{subfigure}[t]{.52\textwidth}
        \centering
        \includegraphics[width=\linewidth]{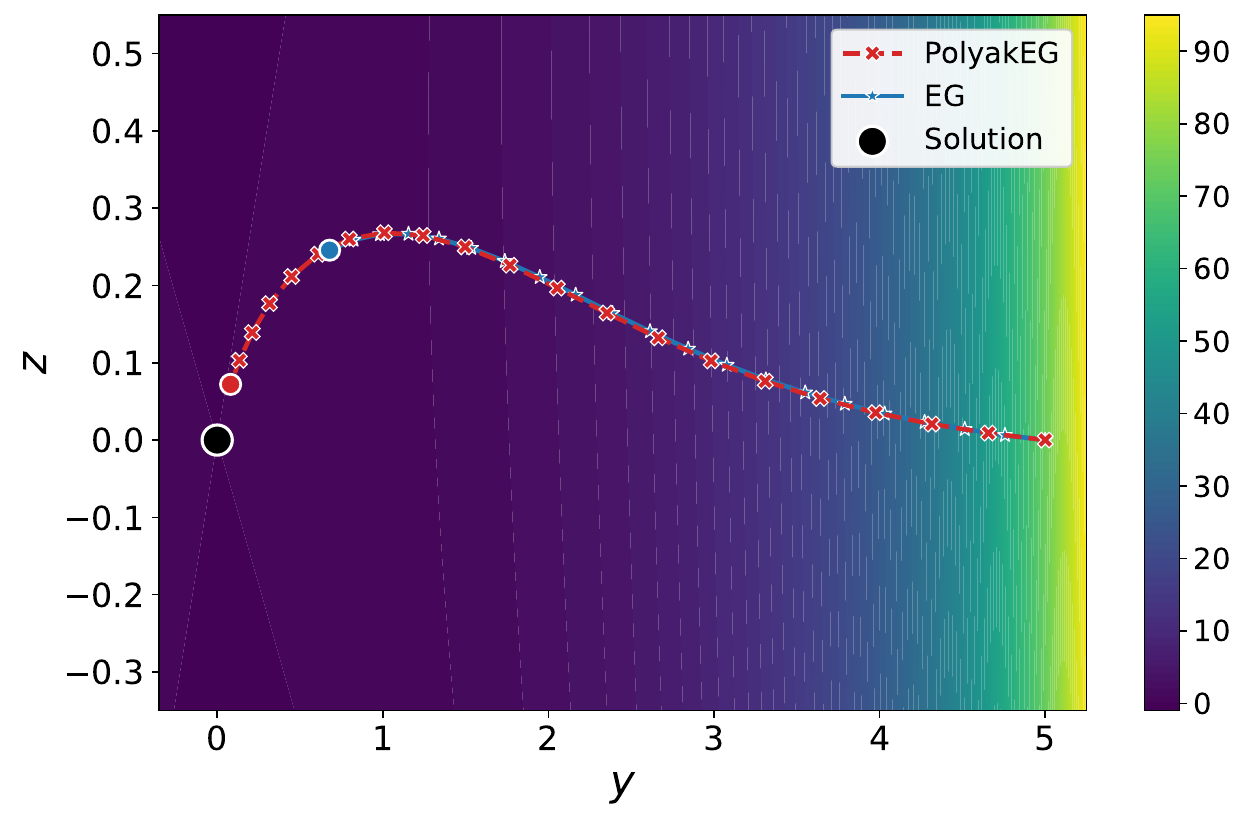}
        \caption{2D nonlinear $(L_0, L_1)$-Lipschitz game}
        \label{fig:nonlinear-sinh}
    \end{subfigure}
    \hfill
    \begin{subfigure}[t]{.46\textwidth}
        \centering
        \includegraphics[width=\linewidth]{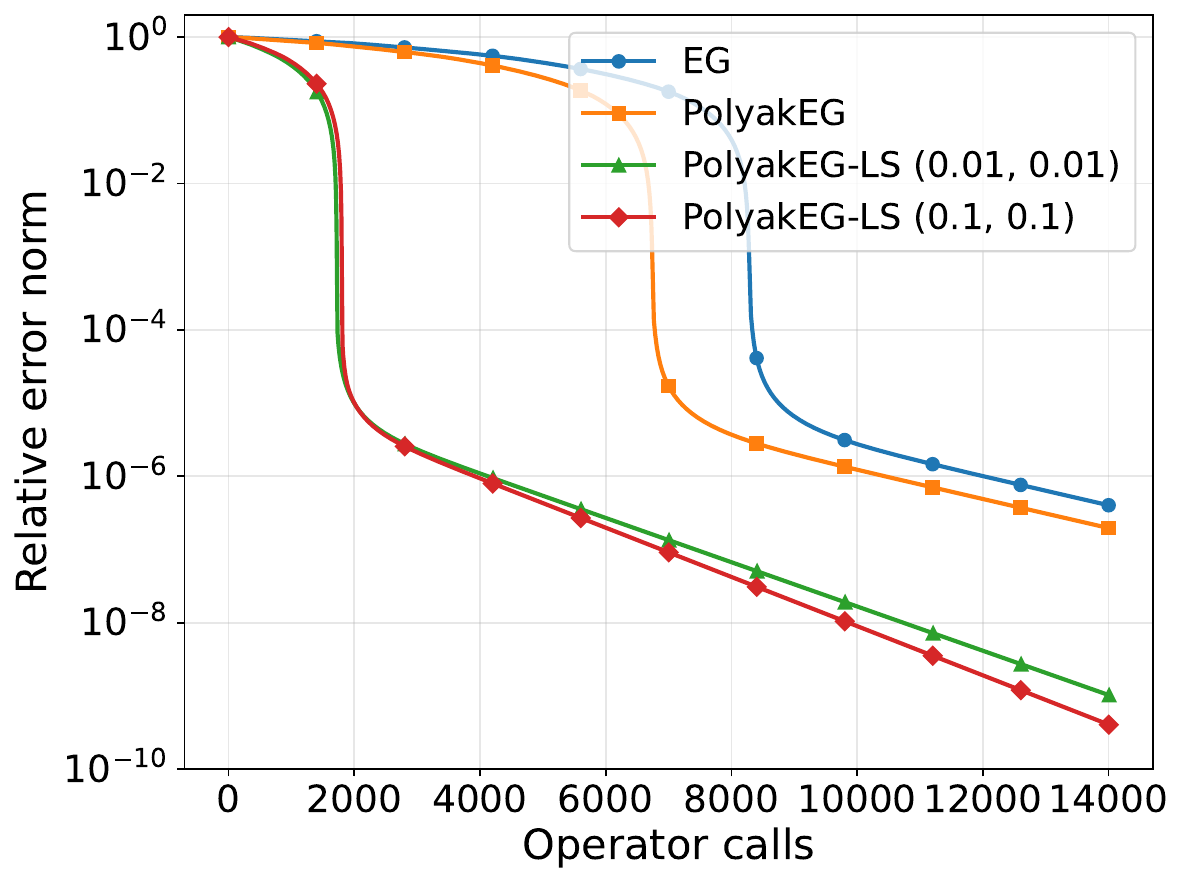}
        \caption{High-dimensional $(L_0, L_1)$-Lipschitz game}
        \label{fig:nonlinear-cosh-high-dimensional}
    \end{subfigure}
    \caption{Trajectory and performance (in terms of relative error) on $(L_0, L_1)$-Lipschitz problems.}
    \label{fig:placeholder}
\end{figure}

\paragraph{$(L_0, L_1)$-Lipschitz saddle problem on 2D.}
Consider the minimax problem
\[
    \underset{y\in\R}{\text{minimize}} \,\, \underset{z\in\R}{\text{maximize}} \quad g(y,z)=\cosh(y)+\rho yz-\cosh(z)
\]
whose saddle gradient operator is
\[
    F(y,z) = \left( \sinh(y)+\rho z, \sinh(z)-\rho y \right).
\]
The operator is strongly monotone and not globally Lipschitz continuous.
However, it is $(L_0, L_1)$-Lipschitz with $L_0 = \sqrt{1 + \rho^2}$ and $L_1 = 1 + \rho$.
\Cref{fig:nonlinear-sinh}, plotting the first 20 iterations of \algname{EG} and \algname{PolyakEG} using the common extrapolation step-size $\gamma_k = \frac{1}{L_0 + L_1 \norm{F(y_k, z_k)}}$, shows \algname{PolyakEG} trajectory more quickly reaching the unique solution $(y_*, z_*) = (0, 0)$.
Indeed, the values of $\frac{\norm{(y_{20}, z_{20})}}{\norm{(y_0, z_0)}}$ for \algname{EG} and \algname{PolyakEG} are respectively $0.144$ and $0.022$.
This illustrates the effectiveness of Polyak-type correction beyond globally Lipschitz problems.

\paragraph{$(L_0, L_1)$-Lipschitz saddle problem on higher dimensions.}
Next, we consider the following high-dimensional extension of the previous 2D $(L_0, L_1)$-Lipschitz problem:
\[
    \underset{y\in\R^{100}}{\text{minimize}} \,\, \underset{z\in\R^{100}}{\text{maximize}} \quad
    g(y,z) = \sum_{i=1}^{100}a_y\cosh\!\left(\frac{u_i^\top y}{s_i^y}\right)
    +y^\top Cz
    -\sum_{i=1}^{100}a_z\cosh\!\left(\frac{v_i^\top z}{s_i^z}\right).
\]
We take $a_y=1, a_z=1.44$ and $\rho=0.05$.
Here $s_i^y$ are geometrically spaced between 0.9 and 1.1 for $i=1,\dots,40$, $\left( s_i^y \right)^{-2}$ are linearly spaced between $2\times 10^{-3}$ and $2\times 10^{-1}$ for $i=41,\dots,100$, and $s_i^z = \sqrt{\frac{a_z}{a_y}} s_i^y$ for $i=1,\dots,100$.
We take $C = U \, \mathrm{diag}(\rho d_1,\ldots,\rho d_{100}) \, V^\top$, where $d_i=\frac{a_y}{(s_i^y)^2} = \frac{a_z}{(s_i^z)^2}$.
The saddle operator is strongly monotone with parameter $\min_i d_i = 2\times 10^{-3}$, not globally Lipschitz, but is $(L_0,L_1)$-Lipschitz with $L_0 = \max_i d_i\sqrt{1+\rho^2}$ and $L_1 = \frac{1}{(1-\rho)\min_i\{s_i^y,s_i^z\}}$.
We take $\rho = 0.3$, $A = \frac{1}{\sqrt{2}}$, $\gamma_k = \frac{1}{L_0 + L_1 \norm{F(x_k)}}$ for \algname{EG} and \algname{PolyakEG} where $x_k = (y_k, z_k)$, $\beta = 3$ and $(\lambda_{-1}^0, \lambda_{-1}^1) \in \left\{ (0.1,0.1), (0.01,0.01) \right\}$ for \algname{PolyakEG-LS}.
\Cref{fig:nonlinear-cosh-high-dimensional} shows that \algname{PolyakEG} with $\gamma_k = \frac{1}{L_0 + L_1 \norm{F(x_k)}}$ displays faster convergence compared to \algname{EG} with the same $\gamma_k$.
Both line-search variants make substantially faster progress, illustrating that the choice $\gamma_k = \frac{1}{L_0 + L_1 \norm{F(x_k)}}$ can be conservative, while \algname{PolyakEG-LS} can adapt to the local geometry of the problem to achieve rapid convergence.
\medskip

\subsection{Stochastic Setting}

\begin{figure}[t]
    \centering
    \begin{subfigure}[t]{.48\textwidth}
        \centering
        \includegraphics[width=\linewidth]{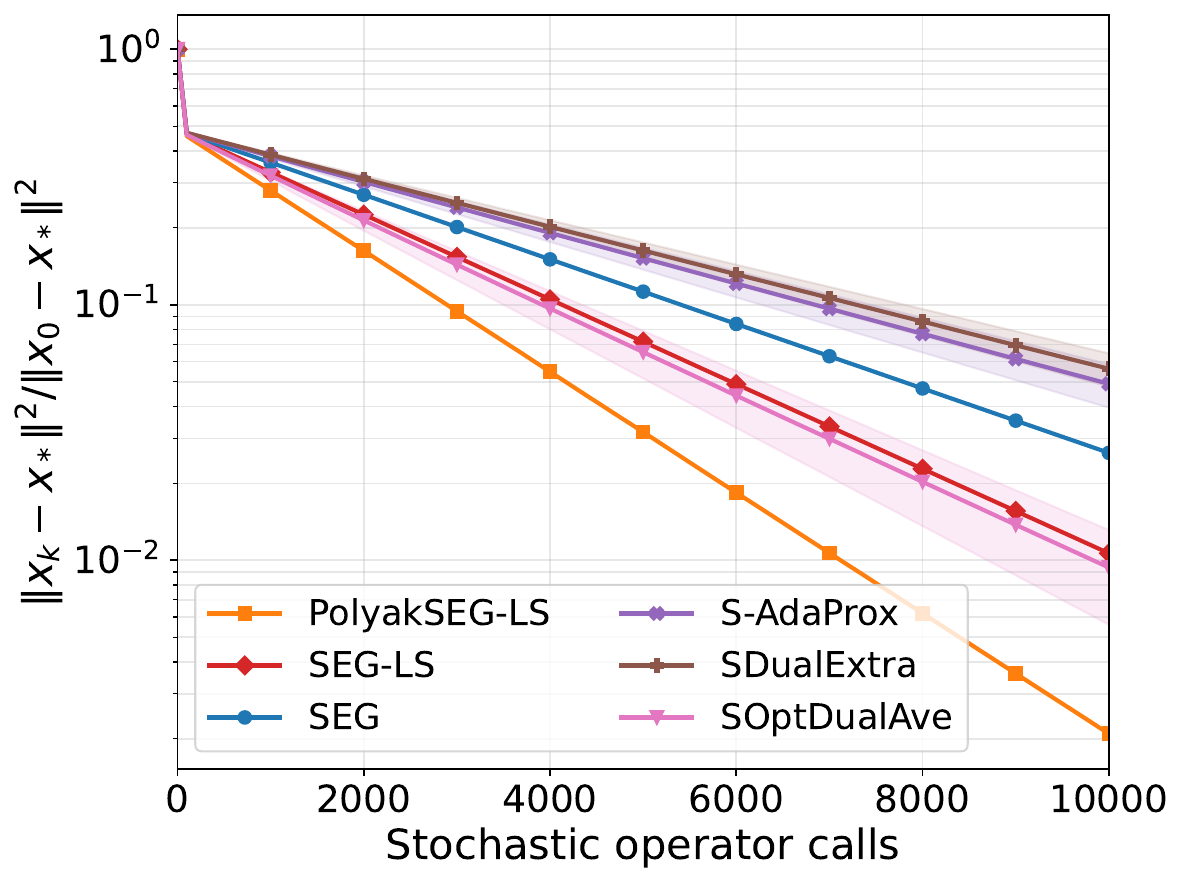}
        \caption{Interpolated stochastic affine root finding}
        \label{fig:stochastic-interpolation}
    \end{subfigure}
    \hfill
    \begin{subfigure}[t]{.48\textwidth}
        \centering
        \includegraphics[width=\linewidth]{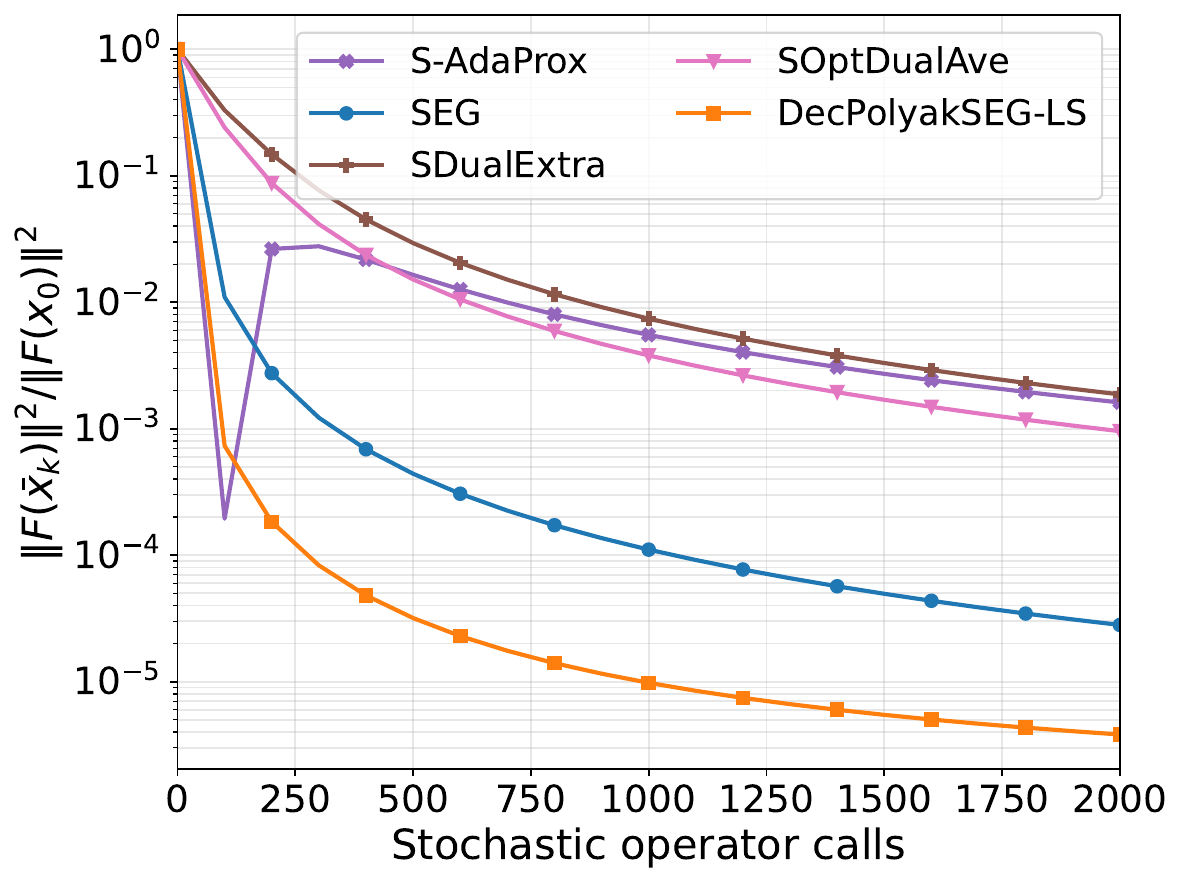}
        \caption{Stochastic robust least squares}
        \label{fig:stochastic-robust-least-squares}
    \end{subfigure}
    \caption{Performance plots for stochastic setting. Curves and shaded regions denote the mean
    and one standard deviation over (a) five trials and (b) ten trials.
    We report the relative squared distance to $x_*$ for (a) where we have linear convergence, while we  use the relative squared residual norm $\sqnorm{F(\overline{x}_k)}$ for (b) where convergence is sublinear.}
    \label{fig:stochastic-performance}
\end{figure}

\paragraph{Interpolated stochastic affine root finding.}
We consider a stochastic counterpart of the affine experiment associated with \cref{fig:deterministic-affine}, where the objective operator is $F(x)=\frac{1}{n}\sum_{i=1}^n F_i(x)$ with $F_i(x) = M_i x + b_i$, where the matrices $M_i = \begin{bmatrix} A_i & B_i\\ -B_i & C_i \end{bmatrix}$ have the same block form as in the deterministic problem and $A_i, B_i, C_i \in \mathbb{S}^{10}$.
We take $n=100$, and generate $A_i$, $B_i$ and $C_i$ to have uniformly random eigenvalues in $[10^3,10^4]$, $[0,100]$ and $[1,10]$, respectively.
A common solution $x_*$ is drawn from a normal distribution, and we set $b_i=-M_i x_*$ for $i=1,\dots,n$ so that every component satisfies $F_i(x_*)=0$. 
Each stochastic sample operator evaluation uses a mini-batch of five components.

We consider the following baseline algorithms: \algname{SEG} with constant step-size $\gamma_k = \frac{1}{2L_\mathrm{max}}$ where $L_\mathrm{max} = \max_{i=1,\dots,n} \|M_i\|_2$ is the largest component Lipschitz constant; \algname{SEG-LS}, which is the stochastic \algname{EG} using the same line-search scheme as \algname{DecPolyakSEG-LS} as considered in \citet{vaswaniPainlessStochasticGradient2019}; \algname{S-AdaProx}, which is the independent-sampling stochastic variant of \algname{AdaProx} from \citet{antonakopoulosAdaptiveExtragradientMethods2021}; and Stochastic Dual Extrapolation (\algname{SDualExtra}) and Stochastic Optimistic Dual Averaging (\algname{SOptDualAve}) algorithms from \citet{antonakopoulosSiftingNoiseUniversal2021}.

For all line-search variants, we use $\gamma_{-1} = 1$, $A = \frac{1}{\sqrt{2}}$ and line-search contraction factor $0.3$ (equivalent to $\beta = \frac{10}{3}$).
For adaptive baaseline algorithms, we use $\gamma_k = (L_\mathrm{max}^{-2} + D_k)^{-1/2}$ where $D_0 = 0$ and $D_k$ accumulates the second moment of operator evaluations or their differences, depending on the algorithm design.
We observe that, as shown in \cref{fig:stochastic-interpolation}, \algname{PolyakSEG-LS} attains the smallest error within $10^4$ oracle-call budget.
Among the remaining baseline algorithms, \algname{SOptDualAve} performs best and \algname{SEG-LS} closely matches its performance, while \algname{S-AdaProx} and \algname{SDualExtra} make slower but steady progress.

\paragraph{Stochastic robust least squares.}
We next consider the robust least-squares problem \citep{elghaouiRobustSolutionsLeastsquares1997,YangKiyavashHe2020_global}
\begin{equation}
    \label{eq:robust-least-squares-experiment}
    \underset{v\in\R^{10}}{\text{minimize}} \,\, \underset{y\in\R^{442}}{\text{maximize}} \quad 
    \norm{\mathbf{A}v-y}^2-\lambda\norm{y-y_0}^2 
\end{equation}
under stochastic operator oracle.
We use the standardized \textit{diabetes} design matrix $\mathbf{A}\in\R^{442\times10}$ from \texttt{scikit-learn} and set $\lambda=100$. 
We draw $v_*\sim\mathcal N(0,I_{10})$ and $\varepsilon\sim\mathcal N(0,I_{442})$ and set $y_0=\mathbf{A}v_*+\varepsilon$. 
The stochastic oracle is $F_\xi(x)=F(x)+\xi$, where $x=(v,y)$, $F(x)$ is the saddle operator, and $\xi\sim\mathcal N(0,I_{452})$.

All methods use the initial point $(v_0, y_0) = (0, 0)$, and initial extrapolation step-size $10^{-2}$.
We compare \algname{DecPolyakSEG-LS} with $c_{-1}=1$ and $c_k=\sqrt{k+1}$ against decreasing-step \algname{SEG} with $\gamma_k = \alpha_k = \frac{10^{-2}}{\sqrt{k+1}}$, \algname{S-AdaProx}, \algname{SDualExtra}, and \algname{SOptDualAve}. 
Following the respective theories, we take $\overline{x}_k$ as uniform average of $\hx_k$ all algorithms except for \algname{S-AdaProx}, while for \algname{S-AdaProx} we take the average weighted by step-sizes.
We observe that \algname{DecPolyakSEG-LS} attains the smallest residual, while retaining the advantage that it can be run without the knowledge of the operator's Lipschitz constant.

\section{Conclusion}
\label{sec:conclusion}

We develop a theory of monotone root-finding problem based on Polyak's principle.
Our results show that the idea of Polyak-type step-size correction extends beyond the minimization setting where the knowledge of optimal value is required, specifically to root-finding problems.
Analysis of \algname{PolyakEG} in the deterministic case separates two complementary roles
of adaptivity.
The extrapolation step $\gamma_k$ controls local operator variation so that the update step can make a sufficient progress, while the Polyak update step-size $\alpha_k$ optimizes the guaranteed progress.
Regularity assumptions quantify the range of admissible $\gamma_k$, but do not directly affect how $\alpha_k$ is chosen---this separation explains why we are able to obtain a unified convergence theorem and line-search variant \algname{PolyakEG-LS} that accommodates all of Lipschitz, H\"older-continuous, and $(L_0,L_1)$-Lipschitz operators.

We explore extensions of \algname{PolyakEG} to stochastic settings and their limitations, and they raise concrete questions for future work.
Can the deterministic algorithm's adaptivity to H\"older continuity and $(L_0,L_1)$-Lipschitzness be extended to stochastic operators?
Which stability mechanisms can be incorporated into \algname{DecPolyakSEG} to guarantee convergence for any stochastic monotone root-finding problems with adequate regularity assumptions?
We believe that these questions will lead to a direction beyond simply transferring step-size formulas or proof techniques: developing a broader understanding of Polyak-type correction that can adapt to distinct operator geometry and stochastic noise.

\section*{Acknowledgments}
The authors' contribution to this work was supported by NSF CCF 2504626 and NSF CAREER 2542902.

\bibliography{bibfile}

\end{document}